\documentclass[fleqn]{cas-sc}

\usepackage{hyperref}
\hypersetup{
    colorlinks = true,
    linkcolor = blue
    }
\usepackage{amsmath,amsthm,amssymb}
\usepackage{bm}
\usepackage{graphicx}
\usepackage[noabbrev,capitalise,nameinlink]{cleveref}
\usepackage{caption}
\usepackage{mathtools}
\usepackage{subcaption}
\usepackage{siunitx}
\usepackage{fonts}
\usepackage{enumitem}
\usepackage{multirow}
\usepackage{placeins}
\usepackage{xcolor}

\usepackage{algorithm}
\usepackage{algpseudocode}

\algrenewcommand\algorithmiccomment[1]{\hfill\(\triangleright\) #1}

\crefalias{step}{enumi}
\crefname{step}{step}{steps}
\Crefname{step}{Step}{Steps}

\crefalias{property}{enumi}
\crefname{property}{property}{properties}
\Crefname{property}{Property}{Properties}

\newcommand{\bmc}{\bm{c}}
\newcommand{\bmi}{\bm{i}}
\newcommand{\bmx}{\bm{x}}
\newcommand{\bmv}{\bm{v}}
\newcommand{\bmu}{\bm{u}}

\newcommand{\bmz}{\bm{z}}
\newcommand{\bfe}{\bf{e}}
\newcommand{\bfn}{{\bf{n}}}

\newcommand{\bmrho}{\bm{\rho}}
\newcommand{\dx}{{\rm\,d}}

\newcommand{\bmj}{\bm{j}}
\newcommand{\bmell}{\bm{\ell}}

\newcommand{\bmN}{\bm{N}}
\newcommand{\bmW}{\bm{W}}
\newcommand{\bmV}{\bm{V}}
\newcommand{\bmX}{\bm{X}}
\newcommand{\bmZ}{\bm{Z}}

\newcommand{\Kn}{{\mathrm{Kn}}}
\newcommand{\dofvavg}{\mathrm{DoF}_{\bmv}^{\mathrm{avg}}}

\newcommand{\grad}{\nabla}
\newcommand{\dt}{\Delta t}

\newcommand{\asgard}{{\rm ASGarD}}

\newcommand{\lavg}{\{\!\!\{}
\newcommand{\ravg}{\}\!\!\}}
\newcommand{\ljmp}{[\![}
\newcommand{\rjmp}{]\!]}
\newcommand{\ledge}{\big<\!\!\big<}
\newcommand{\redge}{\big>\!\!\big>}
\newcommand{\mT}{\mathcal{T}}
\newcommand{\jmp}[1]{\ljmp #1 \rjmp}
\newcommand{\avg}[1]{\lavg{#1}\ravg} 
\newcommand{\AP}{\cA_{\text{V}}}

\newcommand{\W}{\Omega}

\DeclareMathOperator{\spann}{span}

\newtheorem{prop}{Proposition}
\newtheorem{defn}{Definition}
\newtheorem{remark}{Remark}

\numberwithin{equation}{section}
\numberwithin{figure}{subsection}
\numberwithin{table}{subsection}
\numberwithin{prop}{subsection}
\numberwithin{defn}{subsection}
\numberwithin{remark}{subsection}
\numberwithin{algorithm}{subsection}

\begin{document}
\let\WriteBookmarks\relax
\def\floatpagepagefraction{1}
\def\textpagefraction{.001}

\shorttitle{Adaptive Sparse-grid BGK}

\shortauthors{S.~Schnake et al.}

\title[mode=title]{Adaptive Sparse-Grid Discontinuous Galerkin Approximations to the Bhatnagar--Gross--Krook Model}

\tnotemark[1]

\tnotetext[1]{
This manuscript has been authored by UT-Battelle, LLC under Contract No.~DE-AC05-00OR22725 with the U.S.~Department of Energy. The United States Government retains and the publisher, by accepting the article for publication, acknowledges that the United States Government retains a non-exclusive, paid-up, irrevocable, world-wide license to publish or reproduce the published form of this manuscript, or allow others to do so, for United States Government purposes. The Department of Energy will provide public access to these results of federally sponsored research in accordance with the DOE Public Access Plan(\url{http://energy.gov/downloads/doe-public-access-plan}).\\
S.~Schnake and C.D.~Hauck were supported by the Office of Science, Advanced Scientific Computing Research program of the U.S. Department of Energy.
M.~Stoyanov and E.~Endeve were supported by the Office of Science, Scientific Discovery through Advanced Computing program of the U.S.~Department of Energy.
M.~Stoyanov was supported through the Laboratory Directed Research and Development Program of Oak Ridge National Laboratory, managed by UT-Battelle, LLC, for the U.S.~Department of Energy.
}

\author[1]{Stefan Schnake}[orcid=0000-0002-1518-3538]
\cormark[1]
\ead{schnakesr@ornl.gov}
\credit{Conceptualization, Methodology, Software, Formal Analysis, Investigation, Writing - Original Draft, Visualization}

\affiliation[1]{organization={Computer Science and Mathematics Division, Oak Ridge National Laboratory},city={Oak Ridge},state={TN},postcode={37830},country={USA}}

\author[1]{Miroslav Stoyanov}[orcid=0000-0002-8199-5577]
\ead{stoyanovmk@ornl.gov}
\credit{Methodology, Software, Writing - Review \& Editing}

\author[1,2]{Eirik Endeve}[orcid=0000-0003-1251-9507]
\ead{endevee@ornl.gov}
\credit{Validation, Investigation, Writing - Review \& Editing}

\affiliation[2]{organization={Department of Physics and Astronomy, The University of Tennessee},city={Knoxville},state={TN},postcode={37996},country={USA}}

\author[1,3]{Cory D. Hauck}[orcid=0000-0001-5559-502X]
\ead{hauckc@ornl.gov}
\credit{Writing - Review \& Editing, Supervision}

\affiliation[3]{organization={Department of Mathematics, The University of Tennessee},city={Knoxville},state={TN},postcode={37996},country={USA}}

\cortext[1]{Corresponding author}

\begin{abstract}
    This work studies adaptive sparse-grid discontinuous Galerkin (DG) discretizations for the Bhatnagar–Gross–Krook (BGK) model, a kinetic equation posed in four- and six-dimensional phase-space. Standard DG methods are rendered impractical for the BGK model by the curse of dimensionality, motivating compressed representations that adapt to the solution in time. Using the adaptive sparse-grid DG method, we quantify accuracy and compression by comparing the adaptive degrees of freedom to full-grid DG methods and by assessing the resulting kinetic and fluid quantities in both fluid and rarefied regimes. Test cases include a relaxation problem, a multidimensional Sod shock tube, and shear/expansion flows used in prior low-rank BGK studies. To build an efficient Maxwellian evaluation without violating conservation, a central obstacle for structure-perserving BGK simulations, we introduce a hybrid interpolation strategy that exploits velocity separability to recover the correct discrete collision invariants and prove conservation of the resulting discrete collision operator on adaptive sparse grids. Our results show that the adaptive sparse-grid strategy can recover accurate and physically relevant solutions with sharp gradients, and the method reduces the active degrees of freedom by factors ranging from several-fold to several orders of magnitude, with the largest reductions occurring in the six-dimensional examples. All computations are performed with the open-source ASGarD adaptive sparse-grid DG library.
\end{abstract}


\begin{keywords}
sparse grids \sep BGK model \sep discontinuous Galerkin methods \sep kinetic equations
\end{keywords}

\begin{NoHyper}
\maketitle
\end{NoHyper}

\section{Introduction}

In this paper, we investigate adaptive sparse-grid solutions to kinetic equations using the Bhatnagar--Gross--Krook (BGK) model.  In a general setting, equations of this type are defined in terms of a kinetic distribution $f$ that evolves in time over a six-dimensional phase space (three position and three velocity variables).

The BGK model features the BGK collision operator which is a  relaxation of the distribution to a local Maxwellian equilibrium \cite{bhatnagar1954ModelCollision}. 
The BGK collision operator is a simplification of the Boltzmann collision operator, which requires evaluation of a five-dimensional integral at every point in phase space.  In spite of the simplification, the BGK operator inherits several fundamental properties of the Boltzmann operator, including the preservation of collision invariants which induce local conservations laws, an entropy dissipation law, and the unique characterization of local thermal equilibrium (the so-called H-theorem).
Additionally, the BGK model features multi-scale behavior that arises due to the balance between phase-space advection and collisional dynamics.
Due to the above mentioned properties, the BGK well aligns with the compressible Euler and Navier--Stokes equations in the \textit{fluid regime} where collisions dominate.
The main drawback of the BGK collision operator is the inability to reproduce the correct Prandtl number of the Boltzmann equation, motivating extensions such as the ES-BGK \cite{holwayjr1965KineticTheory} and Shakhov \cite{shakhov1968GeneralizationKrook} collision operators.

The approximation for the phase-space distribution $f$ is built with the discontinuous Galerkin (DG) method, which was introduced for kinetic models in radiation transport \cite{reedHill_1973} and remains popular for kinetic simulations \cite{hakim_etal_2020,endeveHauck_2022,alekseenko2012application,cheng2014energy,cheng2011brief,endeve2015bound,garrett2018fast,hong2022generalized,qiu2011positivity,abdelmalik2016entropy}.
The DG method is high-order in accuracy and can be modified to preserve many physical constraints (e.g., local conservation, positivity, entropy stability, etc.) \cite{zhangShu_2011,zhang2022energy,guermond2010asymptotic,yan2023entropy}.
However, when applied to high-dimensional PDEs, Eulerian grid-based methods, including DG, suffer from the \textit{curse of dimensionality} \cite{bellman1959adaptive}, where the cost to approximate a general measurable function scales like $\mathcal{O}(N^d)$, with $d$ the dimension of the domain and $N$ the degrees of freedom in a single dimension.
Such a scaling in six dimensions makes the standard DG method intractable for approximating general kinetic equations, even on leadership class computing facilities \cite{evans2026SweepbasedImplicit,dimarco2018EfficientNumerical}, and until recently limited simulations of the BGK model to particle based methods \cite{macrossan2001ParticleSimulation,pfeiffer2019ExtensionParticlebased,carrillo2020particle}.  

The curse of dimensionality motivates the use of various methods to compress grid-based solutions of the BGK model during simulations to mitigate the large memory and compute footprint.
Most popular among these methods are low-rank methods, where a low-rank decomposition of the discrete phase-space distribution, formed as a matrix or tensor, is advanced in time \cite{einkemmer2021EfficientDynamical,dektor2026InterpolatoryDynamical,sands2026AdaptiverankApproach,galindo-olarte2025NodalDiscontinuous,baumann2026StableMultiplicative}.
These methods show impressive compression rates from the BGK model in the fluid regime, but low-rank simulations of the non-linear BGK model in the \textit{rarefied} regime, where phase-space advection matches or dominates collisions, often require significantly larger ranks to adequately capture the non-equilibrium behavior of the kinetic distribution \cite{dektor2026InterpolatoryDynamical,guo2026HighlyEfficient,sands2026AdaptiverankApproach}.
However, the rarefied regime is precisely where continuum models break down and where efficient compression strategies are needed \cite{struchtrup2005MacroscopicTransport,sharipov2016RarefiedGas}.

Along with low-rank methods, sparse-grids methods have been developed for grid-based discretizations of high-dimensional partial differential equations (PDEs) \cite{leentvaar2006pricing,hemker1995sparse,mishra2012sparse,schwab2008sparse, shen2010sparse, shen2010efficient, balder1996solution}, including the DG method \cite{schnake2024SparsegridDiscontinuous,wang2016sparse,guo2016sparse}. 
Instead of a traditional local modal or nodal basis, the sparse-grid DG method utilizes a multiwavelet basis \cite{alpert1993class} expansion that induces a decay in the coefficient size over finer levels based on the solution regularity.
The decay allows model-independent criteria for local coarsening or refinement in time, and this procedure, called the adaptive sparse-grid DG method, has been shown to significantly reduce the degrees of freedom (DoFs) required to store the kinetic distribution, even when solutions exhibit discontinuous behavior \cite{schnake2024SparsegridDiscontinuous,kormann2016sparse,tao2019sparse}.

The goal of this work is to construct efficient and accurate simulations for the BGK model utilizing adaptive sparse grids.
DG sparse-grid simulations of the BGK model have been traditionally hampered by the lack of separability of the Maxwellian source in the BGK collision operator, but recent work in sparse-grid interpolation methods \cite{huang2020adaptive,tao2021AdaptiveHighorder} have enabled an efficient evaluation of the Maxwellian.
However, direct application of these interpolatory procedures results in a discrete collision operator that does not capture the correct collision invariants and leads to non-physical oscillations during simulations.
This behavior is even more pronounced on adaptive sparse grids, where velocity resolution can be coarse in certain regions.
In this work, we detail a hybrid interpolation procedure that takes advantage of the Maxwellian being separable in velocity to recover the correct collision invariants.
We prove that the hybrid interpolation is conservative on adaptive sparse grids and show in several numerical experiments why such an approach is preferable.

We assess the quality and computational savings of the adaptive sparse-grid DG method applied to the BGK model in four and six dimensions.
The main metric of compression is the ratio of DoFs of the adaptive sparse-grid DG method to the standard ``full-grid'' DG method, and we additionally compare to solutions derived from other fluid or low-rank kinetic models for quality.
The test problems are: a spatially varying initial condition relaxing toward equilibrium, a multi-dimensional version of the Sod Shock tube problem \cite{toro2009RiemannSolvers} featuring sharp gradients, and a shear flow and expansion problem referenced in the low-rank BGK literature \cite{einkemmer2021EfficientDynamical,dektor2026InterpolatoryDynamical}.
In general, we find that the adaptive sparse-grid method well approximates the kinetic distribution and its associated velocity moments with a fraction of the DoFs in both the fluid and rarefied regimes.
For the Sod problem in particular, where an initial contact discontinuity quickly produces non-equilibrium behavior, we compare the sparse-grid kinetic velocity distribution against the first-order Chapman--Enskog expansion used to derive the Navier--Stokes equations.
We find that the velocity profiles are both qualitatively (e.g., smoothness) and quantitatively (e.g., perturbation size) different away from the fluid regime.

Complementing this work is the development of the adaptive sparse-grid DG library \asgard~(Adaptive Sparse-Grid Discretization) \cite{hahn2024ASGarDAdaptive}.  
The goal of this open-source project is to facilitate and promote the use of adaptive sparse-grid methods for the approximation of kinetic models by providing a robust yet flexible adaptive sparse-grid library that is deployable on CPU and GPU architectures.
All sparse-grid results in this work were computed using \asgard.

The rest of the paper is organized as follows.  In \Cref{sec:prelim} we list the BGK model, the full-grid DG method, and the timestepping method used to advance the discretized equations.
\Cref{sec:sparse-grid} provides an overview of the adaptive sparse-grid DG method, while \Cref{sec:interp} introduces the adaptive DG interpolation and the new hybrid interpolation procedure.
In \Cref{sec:numerical_experiments} we analyze the results of the adaptive sparse-grid DG method on the suite of chosen test problems.
\Cref{sec:conclusion} gives final conclusions and future work.

\section{Preliminaries, notation, and the BGK model}
\label{sec:prelim}

\subsection{The BGK model}

Let $d\in\mathbb{N}$.  Given $\bmx\in\Omega_{x}\subset\mathbb{R}^d$ and $\bmv\in\mathbb{R}^d$, the BGK model for a distribution  $f=f(\bmx,\bmv,t)$ in a $dxdv$ phase-space dimensions reads
\begin{equation}\label{eqn:bgk}
	\partial_t f + \bmv\cdot\grad_{\bmx} f = \nu(M[\bmrho_f]-f),\qquad (\bmx,\bmv,t)\in\Omega_x\times\bbR^d\times(0,\infty)
\end{equation} 
where we assume for simplicity that $\nu\geq 0$ is a constant collision frequency.
In \eqref{eqn:bgk}, the $d+2$ moments $\bmrho_f=\bmrho_f(\bmx,t)$ are the density, momentum, and energy of the distribution $f$ and are defined by 
\begin{equation}
	\bmrho_f(\bmx,t) = \int_{\mathbb{R}^d} {\bfe}(\bmv) f(\bmx,\bmv,t)~{\rm d}\bmv~~\text{where}~~{\bfe}(\bmv)=\begin{pmatrix} 1 \\ \bmv \\ \frac{1}{2}|\bmv|^2 \end{pmatrix}.
\end{equation}
Additionally, $M[\bmrho]$ is a Maxwellian distribution defined for generic moments $\bmrho(\bmx)\in\bbR^d \times \bbR^d \times \bbR$ by
\begin{equation}
	M[\bmrho(\bmx)](\bmv) = \frac{n(\bmx)}{(2\pi\theta(\bmx))^{d/2}}\exp{\bigg(\frac{-|\bmv-\bmu(\bmx)|^2}{2\theta(\bmx)}\bigg)},
\end{equation} 
where the density $n$, bulk velocity $\bmu$, and temperature $\theta$ are the fluid variables that are defined through the moments $\bmrho$ by
\begin{equation}
	n = \rho_0,\qquad \bmu = \frac{(\rho_1,\ldots,\rho_d)^\top}{n},\qquad \theta = \frac{1}{d}\Big(\frac{2\rho_{d+1}}{n} - \|\bmu\|^2\Big).
\end{equation}
We equip \eqref{eqn:bgk} with either periodic boundary conditions or inflow boundary conditions where the outflow(+) and inflow(-) boundaries are given, respectively, by
\begin{equation}
\Omega_\pm = \{(\bmx,\bmv)\in\partial\Omega_x\times\mathbb{R}^d:\pm\bmv\cdot\bfn(\bmx) >0\},
\end{equation}
and $\bfn$ is the unit outward normal of $\partial\Omega_x$.

An important property of the BGK collision operator $f \mapsto \nu(M[\bmrho_f]-f)$ is the conservation of number, momentum, and energy, i.e., for any $\bmx\in\Omega_{x}$
\begin{equation}\label{eqn:BGK_conservation}
    \int_{\bbR^d} {\bfe}(\bmv)(M[\bmrho_f(\bmx)](\bmv)-f(\bmx,\bmv))\dx{\bmv} = {\bf 0}.
\end{equation}

\subsection{Notation}\label{subsec:notation}
Let $\ell_x\in\bbN_0 = \{0,1,2,\ldots\}$, $\W_{\bmx}=(-L_x,L_x)^d\subset \bbR^d$ be the domain in position space, and $\cT_{x,\ell_x}$ be a uniform tensor mesh on $\W_x$ with $2^{\ell_x}$ elements in each dimension.
Let $\cE_{x,\ell_x}$ be the skeleton of $\cT_{x,\ell_x}$.  

Similarly, let $\ell_v\in\bbN_0$, $\Omega_v=(-L_v,L_v)^d \subset \bbR^d$, and $\cT_{v,\ell_v}$ be a uniform tensor mesh on $\W_v$ with $2^{\ell_v}$ elements in each dimension.

We let $\W=\W_x\times\W_v\subset\bbR^{2d}$, and denote $L^2(\W)$ and $H^s(\W)$ to be the standard Lebesgue and Sobolev spaces on $\W$.  
Let $(\cdot\,,\cdot)$ be the $L^2(\W)$-inner product with norm $\|\cdot\|_{L^2(\W)}$.
We denote by $L^2(D)$ and $(\cdot\,,\cdot)_D$ the $L^2$ space with standard inner product on some domain $D$ which is typically $\W_x$ or $\W_v$.  Any of the inner products mentioned above can be trivially extended to vector-valued functions with the standard Euclidean inner product.  

Denote the discontinuous Galerkin finite element spaces $V_{x,\ell_{x}}\subset L^2(\W_x)$ and $V_{v,\ell_{v}}\subset L^2(\W_v)$ by 
\begin{equation}\label{eqn:DG_spaces}
\begin{split}
    V_{x,\ell_x} &= \{g\in L^2(\W_x):g\big|_{K} = \bbQ_k(K)~\forall K\in\cT_{x,\ell_x}\}, \\
    V_{v,\ell_v} &= \{g\in L^2(\W_v):g\big|_{K} = \bbQ_k(K)~\forall K\in\cT_{v,\ell_v}\}
\end{split}
\end{equation}
where $\bbQ_k(K)$ is the set of all polynomials of maximum degree $k$ in any direction on $K$.
Let $\mathcal{V}_\ell=V_{x,\ell_x}\otimes V_{v,\ell_v}$.  

Given an interior edge $e\in\cE_{x,\ell_x}$ and $\bmx\in e=\partial K^+\cap\partial K^-$ with unit outward normals $\bn^{\pm}$, let $g$ be a function with traces $g^\pm(\bmx_*):=\lim_{K^\pm\ni \bmx\to \bmx^*}g(x)$ well defined. 
Define the average and jump of $g$ in $\bmx$, respectively, by
\begin{equation}\label{eqn:avg_jmp_x}
    \avg{g(\bmx)} = \tfrac{1}{2}(g^+(\bmx)+g^-(\bmx))
    \qquad~\text{and}~\qquad
    \ljmp{g(\bmx)}\rjmp = g^-(\bmx)\bn^-+g^+(\bmx)\bn^+.
\end{equation}
We account for the periodic boundary in $\cE_{x,\ell_x}$ by defining the jumps and averages on the boundary using the appropriate periodic boundary.

Let $\ledge\cdot\,,\cdot\redge_e$ be the $L^2$ inner product over an edge $e$ and denote
$\ledge\cdot\,,\cdot\redge_{\cE_{x,\ell_x}}=\sum_{e\in\cE_{x,\ell_x}}\ledge\cdot,\cdot\redge_e$.
The skeleton inner product can be extended to phase-space edge $e\times \Omega_v$ for $e\in\cE_{x,\ell_x}$ denoted by $\ledge\cdot\,,\cdot\redge_{\cE_{x,\ell_x}\times\Omega_v}$
For functions $g$ in $\mathcal{V}_\ell$, let $\grad_x$ represent the piece-wise spatial derivative $g$.

Finally, for time integration, let $\Delta t>0$ be the timestep, assumed for our purposes to be uniform.  For $\mfn\in\bbN_0$ define $t^\mfn=\mfn\Delta t$ and denote $f^\mfn$ to be an approximation to $f(t^\mfn)$.

\subsection{Discontinuous Galerkin method}\label{subsec:discretization}

We first discretize \eqref{eqn:bgk} in phase space on a to-be-determined adaptive sparse-grid DG space paramaterized by a grid $\Theta$ (see \Cref{subsec:adaptive_sparse_grids}) $\bmV_{\Theta}\subseteq\mathcal{V}_\ell$ by the following semi-discrete problem:  Find $f_h\in C([0,\infty];\bmV_\Theta)$ such that

\begin{equation}\label{eqn:discrete_form}
    (\partial_t f_h,g_h) + 
    \AP(f_h,g_h) + \nu(f_h,g_h) = \nu(M_{\Theta}[\bmrho_f],g_h)
\end{equation}
holds for all $g_h\in \bmV_\Theta$.
The Vlasov portion, $\AP$, is discretized with upwind fluxes; specifically, 
\begin{align}\label{eqn:VP_discrete_def}
    \AP(w_h,g_h) &= -(\bmv w_h,\grad_x g_h) + \ledge \bmv\avg{w_h}+\tfrac{|\bmv\cdot\bn|}{2}\jmp{w_h},\jmp{g_h}\redge_{\cE_{x,\ell_x}\times\W_v}
\end{align}
for all $w_h,g_h\in \bmV_\Theta$.
If the boundary conditions are inflow/outflow, we instead prescribe the inflow data in \eqref{eqn:VP_discrete_def} on $\Omega_-$.
The discrete Maxwellian $M_{\Theta}[\bmrho_f]\in \bmV_\Theta$ uses the hybrid interpolation procedure that is defined in \Cref{subsec:discrete_maxwellian}.

\subsection{Time stepping method}\label{subsec:time_stepping}

We discretize \eqref{eqn:discrete_form} in time using Implicit-Explicit (IMEX) Runge--Kutta (RK) methods \cite{ascher1997implicit}.  
Such methods are popular time steppers for evolving kinetic models that feature multiple time scales \cite{pareschiRusso_2005,chu2019realizability,endeveHauck_2022}.  
The Vlasov portion $\AP$ is evolved explicitly and the BGK collision operator is evolved implicitly.
We use the IMEX-RK method of \cite{chu2019realizability} which is given for any $g_h\in\bmV_\Theta$ by:
\begin{subequations}\label{eqn:IMEX_RK}
\begin{align}
(f_h^{(1,*)},g_h) &= (f_h^{\mathfrak{n}},g_h) - \Delta{t}\AP(f_h^{\mathfrak{n}},g_h) \label{eqn:IMEX_RK:ex1}, \\
(f_h^{(1)},g_h) + \nu\dt (f_h^{(1)},g_h)  &= (f_h^{(1,*)},g_h) + \nu\Delta t(M_\Theta[\bm{\rho}_{f_h^{(1)}}],g_h), \label{eqn:IMEX_RK:im1}\\ 
(f_h^{(2,*)},g_h) &= \tfrac{1}{2}(f_h^{\mathfrak{n}},g_h) + \tfrac{1}{2}\big((f_h^{(1)},g_h)-\Delta{t}\AP(f_h^{(1)},g_h)\big), \label{eqn:IMEX_RK:ex2} \\
(f_h^{(2)},g_h) + \tfrac12 \nu\Delta t(f_h^{(2)},g_h) &= (f_h^{(2,*)},g_h) + \tfrac{1}{2}\nu\Delta t(M_\Theta[\bm{\rho}_{f_h^{(2)}}],g_h), \label{eqn:IMEX_RK:im2}
\end{align}
and $f_h^{\mathfrak{n}+1}:=f_h^{(2)}$.
\end{subequations}

\begin{remark}
As written, \eqref{eqn:IMEX_RK:im1} and \eqref{eqn:IMEX_RK:im2} are non-linear implicit equations due to the Maxwellian.
However, if the discrete Maxwellian is constructed to satisfy \eqref{eqn:BGK_conservation}, then $\bmrho_{f_h^{(\cdot,*)}} = \bmrho_{f_h^{(\cdot)}}$.
By substituting in the explicit moments $\bmrho_{f_h^{(\cdot,*)}}$, the right-hand sides of \eqref{eqn:IMEX_RK:im1} and \eqref{eqn:IMEX_RK:im2} are sources, and thus the respective implicit solves can be computed via a scalar multiplication.
This trick, first introduced in \cite{coron}, is standard in the literature.
\end{remark}

\section{Adaptive sparse grids}
\label{sec:sparse-grid}

In this section we provide an overview of the adaptive sparse-grid DG method used for the BGK model.
The method utilizes the wavelet basis to construct a hierarchical and multi-resolution representation of the standard tensor-based full-grid DG space \cite{guo2016sparse}.
The hierarchical representation induces a decay in the associated expansion coefficients over finer levels that is used to adaptively remove or add basis functions in time \cite{guo2017adaptive}.
Below is an abridged treatment of the sparse-grid DG method; we refer the reader to \cite{schnake2024SparsegridDiscontinuous} for a more in-depth presentation.
\Cref{subsec:1D_basis,subsec:nD_basis} construct the single and multi-dimension wavelet basis respectively.
\Cref{subsec:adaptive_sparse_grids} constructs the adaptive grid $\Theta$ and the associated adaptive sparse-grid DG space $\bmV_{\Theta}$.

\subsection{Single dimension multiwavelet basis}\label{subsec:1D_basis}

The one-dimensional multiwavelet basis is a hierarchical basis in which additional basis functions for resolving fine scale features are introduced using orthogonal complements to current functions in the basis.  
To simplify the presentation, we assume a domain $\W=[0,1]$. Given a level $\ell\in\{0,\ldots,N\}$, let $\mT_\ell$ be a uniform mesh on $\W$ with mesh size $h_\ell=2^{-\ell}$.  
The partition of $\mT_\ell$ is characterized by the union of disjoint intervals $I_{\ell,j}:=(2^{-\ell}j,2^{-\ell}(j+1))$ for $j=0,\ldots,2^\ell-1$.  
Given this mesh, define the corresponding DG finite element space $V_\ell$ by
\begin{equation}\label{eqn:1D_DG_space}
    V_\ell := V_\ell^k = \left\{ g\in L^2(\W) : g\big|_{I_{\ell,j}} \in \mathbb{P}_k(I_{\ell,j})~\forall j=0,\ldots,2^\ell-1\right\},
\end{equation}
where $\mathbb{P}_k$ is the space of polynomials of degree up to $k$.  This space has dimension $\mathrm{dim}(V_\ell) = 2^\ell(k+1)$.  
Additionally, due to the uniform partitioning, 
\begin{equation}
    V_0 \subset V_1\subset V_2 \subset \cdots \subset V_N .
\end{equation}

Let $W_\ell$ be the orthogonal complement of $V_{\ell-1}$ in $V_\ell$ with respect to the $L^2(\W)$ inner product; that is, $W_0 = V_0$, while for $\ell \geq 1$, 
\begin{equation}\label{eqn:W_def}
V_\ell = V_{\ell-1}\oplus W_\ell
\qquad\text{and}\qquad
W_\ell \perp V_{\ell-1},
\end{equation}
where $\oplus$ is the direct sum and $\mathrm{dim}(W_\ell) = \max\{0,2^{\ell-1}(k+1)\}$.  Then 
\begin{equation}\label{eqn:hierarchical_decomp_1d}
    V_N = \bigoplus_{\ell=0}^N W_\ell.
\end{equation}

A standard choice for the basis of $W_\ell$ for $\ell\geq 1$ are \textit{wavelets} -- functions that are scaled and shifted to capture finer-scale features.  
The prototype wavelet is the piece-wise constant Haar basis \cite{haar1909theorie}. Here we use the Alpert wavelets \cite{alpert1993class}.

\begin{defn}\label{defn:alpert}
The Alpert wavelets are a set of a functions $\{\phi_i(z):i=1,\ldots,k+1\}\subset L^2(\mathbb{R})$ with support in $[-1,1]$ and defined such that
\begin{enumerate}
    \item $\phi_i\big|_{(0,1)}\in\mathbb{P}_k(0,1)$.
    \item $\phi_i(z) = (-1)^{i+k}\phi_i(-z)$.
    \item $\int_{-1}^1\phi_i(z) y^j\dx{y} = 0$ for all $j=0,1,\ldots,i+k-1$. \label[property]{enum:polynomial_orthogonality}
    \item $\int_{-1}^1 \phi_i(z)\phi_j(z) \dx{y} = \delta_{ij}$ for all $i,j=1,\ldots,k+1$ where $\delta_{ij}$ is the Kronecker delta.
\end{enumerate}
\end{defn}
Construction of the wavelets and examples for various polynomial degrees can be found in \cite[Page 5]{alpert1993class}. 
For each $\ell \geq 0$, we use the Alpert wavelets to define a basis set $\{g_{\ell,j}^{i}\}$ of $W_\ell$. For $\ell=0$, we choose $g_{0,0}^i$ to be the shifted Legendre polynomials normalized on $L^2(\W)$.  For $\ell \geq 1$, we shift and rescale the Alpert wavelets so that for each $z\in(0,1)$,
\begin{equation}\label{eqn:wavelet_basis_1D_def}
    g_{\ell,j}^i(z) = 2^{(\ell-1)/2}\gamma_i(2^{\ell-1}z-j) 
    ,\quad \text{where}
    \quad
    \gamma_i(z):=\sqrt{2}\phi_i(2z-1).
\end{equation}
Here $\ell$ is the level, $j=0,\ldots,2^{\ell-1}-1$ is the level index, and $i=1,\ldots,k+1$ is the polynomial index.  
\Cref{enum:polynomial_orthogonality} of \Cref{defn:alpert} ensures that the wavelet bases are all orthonormal; that is,
\begin{equation}\label{eqn:wavelet_orthonormality}
    \int_0^1 g_{\ell,j}^i(z)g_{\ell',j'}^{i'}(z) \dx{z} = \delta_{ii'}\delta_{\ell\ell'}\delta_{jj'}.
\end{equation}

Although the wavelet representation is just a change of basis fromstandard nodal/modal DG basis expansions, each successive $W_\ell$ comes with a decrease of the support of the basis functions, which leads to a rapid decay of the coefficients of the orthogonal projection.
This is critical for the adaptive procedure shown in \Cref{subsec:adaptive_sparse_grids} as it allows for a fine-grained control of the approximation error, especially in the multi-dimensional context.

\subsection{Multiwavelet basis in multiple dimensions}\label{subsec:nD_basis}

A $d$-dimensional basis is achieved through a tensor product extension.  
Let $\W^{d}=(0,1)^d$ with $\bm{z}=(z_1,\ldots,z_d)\in\W^{d}$. 
Let $\bm{\ell}=(\ell_1,\ldots,\ell_d)\in\mathbb{N}_0^d$ and $\bmj=(j_1,\ldots,j_d)\in\mathbb{N}_0^d$ be multi-index sets, where $\ell_m$ and $j_m$ defines the level and index respectively for dimension $m \in\{1, \dots, d\}$, and define $|\bm{\ell}|_\infty = \max_{1\leq m\leq d}\ell_m$ and $|\bm{\ell}|_1=\ell_1+\cdots+\ell_d$.  Let $\mathcal{T}_{\bm{\ell}}$ be a tensor product mesh with multi-dimensional mesh parameter $\bm{h}:=(2^{-\ell_1},\ldots,2^{-\ell_d})$.
We label all elements in $\mathcal{T}_{\bm{\ell}}$ by $I_{\bm{\ell},\bm{j}} = \{\bmz:z_m\in(2^{-\ell_m}j_m,2^{-\ell_m}(j_m+1)\}$ and define the tensor product finite element space by
\begin{equation}
    \bm{V_\ell}:=\bm{V_\ell}^k=\{g\in L^2(\W):g\big|_{I_{\bm{\ell},\bm{j}}}\in\mathbb{Q}_k(I_{\bm{\ell},\bm{j}}),~\forall~0\leq j_m\leq 2^{\ell_m}-1, m=1,\ldots,d\},
\end{equation}
where $\mathbb{Q}_k(I_{\bm{\ell},\bm{j}})$ represents the set of polynomials of degree up to $k$ in each dimension on $I_{\bm{\ell},\bm{j}}$.

Recall the one-dimensional hierarchical decomposition in \Cref{subsec:1D_basis}.  Given the complementary sets $W_{\ell_m}$ defined in \eqref{eqn:W_def}, let
\begin{equation}
    \bm{W_\ell} = W_{\ell_1}\otimes W_{\ell_2}\otimes\cdots\otimes W_{\ell_d}.
\end{equation}
The multiwavelet basis that we choose for $\bm{W_\ell}$ is constructed from products of the 1D wavelets in \eqref{eqn:wavelet_basis_1D_def}:
\begin{equation}\label{eqn:wavelet_basis_nD_def}
    g_{\bm{\ell},\bm{j}}^{\bm{i}}(\bm{z}) := \prod_{m=1}^d g_{\ell_m,j_m}^{i_m}(z_m),~\text{where}~j_m=0,\ldots,\max\{0,2^{\ell_m-1}-1\},i_m=1,\ldots,k+1.
\end{equation}
It follows from repeated application of \eqref{eqn:wavelet_orthonormality} in each dimension that these multiwavelets are orthonormal in $L^2(\W)$.
We use $c_{\bmell,\bmj}:=c_{\bmell,\bmj}^{\bmi}$ to denote the $(k+1)^d$ coefficients for the multiwavelet expansion on $\bmW_{\bmell,\bmj}$ with respect to $g_{\bmell,\bmj}^{\bmi}$.

Using $\bm{W}_\ell$, the full-grid space at level $N$ can be written as
\begin{equation}\label{eqn:full_grid_def}
    \bm{V}_N = \bigoplus_{|\bm{\ell}|_\infty\leq N}\bm{W_\ell}.
\end{equation}
This space has dimension $\mathrm{dim}(\bm{V}_N) = (k+1)^d2^{Nd}$.
The sparse grid space $\hat{\bm{V}}_N \subseteq \bm{V}_N$ at level $N$ is formed by the following relaxation of the index norm \cite{wang2016sparse,Bungartz_Griebel_2004}:
\begin{equation}\label{eqn:sparse_grid_def}
    \hat{\bm{V}}_N = \bigoplus_{|\bm{\ell}|_1\leq N}\bm{W_\ell}.
\end{equation}    
It was shown in \cite[Lemma 2.3]{wang2016sparse} that  
$\textrm{dim}(\hat{\bm{V}}_N)=\mathcal{O}((k+1)^d 2^N N^{d-1})$.

\subsection{Adaptive sparse grid construction}\label{subsec:adaptive_sparse_grids}

The sparse grid DG space provides a significant savings in memory and compute for larger dimensions as compared to the full-grid degrees of freedom, while still yielding a comparable approximation error due to the $\ell^1$ norm being the optimal choice of indexes based on a priori asymptotic estimates.
However, applying the sparse-grid DG space to the non-linear BGK model leads to numerical instabilities and negative density and temperature, particularly in the pre-asymptotic regime.
Even restricting the sparse-grid reduction to only the velocity DoFs has been shown to not efficiently capture higher order moments of the distribution \cite{schnake2024SparsegridDiscontinuous}.

In order to efficiently resolve the solution in phase space, we employ the adaptive sparse-grid DG method to dynamically identify the optimal set of sparse-grid indexes for a specific function $f$ \cite{guo2017adaptive}.
The method uses the decay in the coefficients $c_{\bmell,\bmj}$ of $f$ over finer levels and parent/child relations as a heuristic to build an adaptive grid $\Theta$ by adding/removing elements from $\Theta$ at every $\mfn$.

The first step is to further decompose the orthogonal complements $\bm{W_\ell}$ by their level $\bm{\ell}$ and position $\bm{j}$ within the level.
This position $\bm{j}$ in the level is based on the multiwavelet basis.
Given the basis in \eqref{eqn:wavelet_basis_nD_def}, we define the space $\bm{W_{\ell,j}}\subset \bm{W_\ell}$ by
\begin{equation}\label{eqn:adaptive_basis_def}
    \bm{W_{\ell,j}} = \spann_{\substack{1\leq i_m\leq k+1 \\ 1\leq m\leq d}} \{g_{\bm{\ell,j}}^{\bm{i}}\}.
\end{equation}
This space has dimension $\textrm{dim}(\bm{W_{\ell,j}})=(k+1)^d$ and
\begin{equation}
    \bm{W_\ell} = \bigoplus_{\bm{j}\in\mathcal{B}_{\bm{\ell}}} \bm{W_{\ell,j}}
    \quad\text{where}\quad
    \mathcal{B}_{\bm{\ell}} := \{\bm{j}=(j_1,\ldots,j_d):j_m=0,\ldots,\max\{0,2^{\ell_m-1}-1\},\forall m=1,\dots, d\}.
\end{equation}

We define adaptive sparse-grid spaces using $\bm{W}_{\bm{\ell},\bm{j}}$.
\begin{defn}[Adaptive Sparse Grid]\label{defn:adapt_grid}
    Let $\bmN=(N_1,\ldots,N_d)$ be the max level in each dimension and let $\Theta=\{(\bm{\ell}^\iota, 
    \bm{j}^\iota)\}_{\iota=1}^M$ denote an adaptive grid such that $\ell_m^\iota \leq N_m$ and $\bm{j}^\iota\in\mathcal{B}_{\bm{\ell^\iota}}$ for all $\iota \in\{1,\dots, M\}$.  Then
    the adaptive sparse-grid DG space $\bm{V}_{\Theta}\subseteq \bm{V}_{\bm{N}}$ is defined as 
    \begin{equation}\label{eqn:adaptive_grid_def}
        \bm{V}_\Theta = \bigoplus_{(\bmell,\bmj)\in\Theta} \bm{W}_{\bm{\ell},\bm{j}}.
    \end{equation}
    Here $M=|\Theta|$ is said to be the number of active elements of the adaptive sparse-grid space $\bm{V}_{\Theta}$, and the degrees of freedom of $\bmV_\Theta$ is given by $\dim(\bmV_\Theta)=M (k+1)^{d}$.
\end{defn}

The hierarchy definition of $\bmW_{\bmell}$ additionally provides a parent/children relationship defined by the following.

\begin{defn}[Children, Parents, and Ancestors]\label{defn:parents_and_children}
    Let $\Theta$ be an adaptive grid with max level $\bm{N}\in\bbN_0^{d}$.  The children of $(\bm{\ell},\bm{j})$, with up to two per dimension, are defined for each dimension $m=1,\ldots,d$ by the following:
    \begin{itemize}
        \item If $\ell_m=0$, then 
        \begin{equation}
            (\bm{\ell}',\bm{j}')=\big( (\ell_1,\ldots,\ell_{m-1},1,\ell_{m+1},\ldots,\ell_d),(j_1,\ldots,j_{m-1},0,j_{m+1},\ldots,j_d) \big),
        \end{equation} is a child of $({\bm{\ell},\bm{j}})$.
        \item If $0<\ell_m<N_{m}$, then
        \begin{subequations}
          \begin{align}
            (\bm{\ell}',\bm{j}')&=\big( (\ell_1,\ldots,\ell_{m-1},\ell_m+1,\ell_{m+1},\ldots,\ell_d),(j_1,\ldots,j_{m-1},2j_m,j_{m+1},\ldots,j_d) \big)  \\
            \shortintertext{and}
            (\bm{\ell}'',\bm{j}'') &=\big( (\ell_1,\ldots,\ell_{m-1},\ell_m+1,\ell_{m+1},\ldots,\ell_d),(j_1,\ldots,j_{m-1},2j_m+1,j_{m+1},\ldots,j_d) \big),
        \end{align}   
        \end{subequations}
        are children of $(\bm{\ell},\bm{j})$.
        \item If $\ell_m=N_{m}$, then there are no children of $(\bm{\ell},\bm{j})$ in dimension $m$.
    \end{itemize}
    The parents of an element $(\bm{\ell},\bm{j})$ are all elements $(\bm{\ell}',\bm{j}')$ such that $(\bm{\ell},\bm{j})$ is a child of $(\bm{\ell}',\bm{j}')$.
    An element $(\bm{\ell}',\bm{j}')$ is an ancestor of $(\bm{\ell},\bm{j})$ if there is a link from $(\bm{\ell},\bm{j})$ to $(\bm{\ell}',\bm{j}')$ through parents only, i.e., a link is formed through parents, grandparents, etc.
    We say a grid $\Theta$ is ancestor complete provided that all parents of $(\bmell,\bmj)$ are in $\Theta$ whenever $(\bmell,\bmj)\in\Theta$.
\end{defn}

We postpone the coarsening/refinement procedure of $\Theta$ until \Cref{subsec:refine_and_coarsen}.

\section{Adaptive sparse-grid DG interpolation method}
\label{sec:interp}

In this section we specify Lagrange interpolation methods to evaluate the discrete Maxwellian $M_{\Theta}[\bmrho_f]$ (see \Cref{subsec:discretization}) on the adaptive sparse-grid DG space $\bmV_\Theta$.
Operators that are separable, i.e., can be formed as a product of 1D operators (for example $f\to v_m\partial_{x_m}f$), can be efficiently evaluated in an adaptive and non-adaptive sparse grid \cite{huang2023adaptive}.
However, the non-trivial coupling of $x$ and $v$ in the  Maxwellian is not separable.

The standard sparse-grid DG interpolation operation $I_\Theta$, given in  \Cref{subsec:dg_interp}, is derived from the work in \cite{huang2020adaptive,huang2023adaptive,tao2021AdaptiveHighorder} and is used as a basis to define the conservative discrete Maxwellian $M_\Theta$ in \Cref{subsec:discrete_maxwellian}.

\subsection{Standard adaptive sparse-grid DG interpolation}
\label{subsec:dg_interp}

The interpolation multiwavelet basis is constructed in a similar fashion to the Alpert basis $g_{\bmell,\bmj}$ in \Cref{sec:sparse-grid} and which we explain in \Cref{subsec:interp_basis_construction}.

\subsubsection{Single dimension construction}
\label{subsec:interp_basis_construction}

We first construct a set of hierarchical (nested) interpolation points in 1D.
Consider a set of $k+1$ interpolation points $Z_0 = \{z^i\}_{i=1}^{k+1}$ on $[0,1]$. 
Define the points $Z_\ell = \{z^i_{\ell,j}\}$ over levels $\ell\geq 1$ by 
\begin{align}
    z_{\ell,j}^i = 2^{-\ell}(z^i+j),~\text{for}~ i=1,\ldots,k+1,~ j=0,\ldots,2^\ell-1.
\end{align}
Setting $z_{0,0}^i:=z^i$ gives us $\{z_{\ell,j}^i\}$ for all levels $\ell\geq 0$.
We require the points to be nested, that is, $Z_0\subset Z_1\subset Z_2\subset\cdots\subset Z_N$; if $Z_0\subset Z_1$ (which is always the case for the points we use), then the rest of the nesting following automatically.  

Define $\widetilde{Z}_1 = Z_1\setminus Z_0 = \{\tilde{z}_{1,0}^1,\ldots,\tilde{z}_{1,0}^{k+1}\}$.
Define 
\begin{align}
    \widetilde{Z}_\ell = \{ Z_{\ell,j}^i := 2^{-(\ell-1)}(j+\tilde{z}_{1,0}^i),~i=0,\ldots,k+1,~j=0,\ldots,2^{\ell-1}-1\}.
\end{align}
Note that $\widetilde{Z}_{\ell} = Z_\ell\setminus Z_{\ell-1}$;  thus by defining $\widetilde{Z}_0 = Z_0$, we have $Z_\ell = \cup_{\ell'=0}^\ell \widetilde{Z}_{\ell'}$.

We now define the interpolatory wavelets.
\begin{defn}\label{defn:interpolatory_wavelets}
    The set of functions $\{\varphi_i\}_{i=1}^{k+1}\subset W_1$ are interpolatory wavelets on $\widetilde{Z}$ with respect to $\widetilde{Z}_0$ and $\widetilde{Z}_1= \{\tilde{z}_{1,0}^1,\ldots,\tilde{z}_{1,0}^{k+1}\}$ provided
    \begin{equation}\label{eqn:interpolatory_wavlets}
        \varphi_i(\tilde{z}_{1,0}^{i'}) = \delta_{ii'}\qquad\text{and}\qquad\varphi_i(\widetilde{z})=0\quad\forall \widetilde{z}\in \widetilde{Z}_0.
    \end{equation}
\end{defn}

We list the interpolation points $\widetilde{Z}_0$ and $\widetilde{Z}_1$ used in the numerical experiments of \Cref{sec:numerical_experiments} in \Cref{tab:interp_points}.
Alternative points are also admissible, but in our experience the points listed yield the smallest interpolation errors of any we tried.

\begin{table}[h!]
    \centering
    \renewcommand{\arraystretch}{1.5}
    \caption{ Interpolation points $\widetilde{Z}_0$ and $\widetilde{Z}_1$ used in \Cref{sec:numerical_experiments} for each polynomial degree.
    Since each interpolatory wavelet in \Cref{defn:interpolatory_wavelets} is possibly discontinuous at $z=\tfrac12$, the $\pm$ denotes the side of the discontinuity for which \eqref{eqn:interpolatory_wavlets} must be satisfied.}
    \begin{tabular}{c|c|c}
        $k$ & $\widetilde{Z}_0$ & $\widetilde{Z}_1$ \\ \hline
         1  & $\tfrac13$, $\tfrac23$ & $\tfrac16$, $\tfrac56$ \\
         2  & 0, ${\tfrac12}^-$, 1 & $\tfrac14$,$ {\tfrac12}^+$, $\tfrac34$ \\
         3  & 0, $\tfrac16$, $\tfrac23$, 1 & $\tfrac16$, ${\tfrac12}^-$, ${\tfrac12}^+$, $\tfrac56$ \\
    \end{tabular}
    \label{tab:interp_points}
\end{table}

We construct the interpolative multiwavelet expansion in a similar manner to the Alpert wavelet basis, that is,
\begin{align}
    \psi_{\ell,j}^{i}(z) = \varphi_{i}(2^{\ell-1}z-j)
\end{align}
for $i=1,\ldots,k+1$, and $j=0,\ldots,2^{\ell-1}-1$, and span over $i,j$ to yield a basis for $W_\ell$.
Using the Lagrange nodal functions on $\widetilde{Z}_0$, denoted $\{\psi_{0,0}^i\}$, which give a basis for $W_0$, we have a basis for $V_N$ given by $\{\psi_{\ell,j}^i\}$ over all $\ell$ and appropriate $j$.
We use $\{b_{\ell,j}^i\}$ to denote a coefficient expansion with respect to $\{\psi_{\ell,j}^i\}$.

An important property of the interpolatory wavelet construction is that
\begin{align}\label{eqn:wavelet_sparsity}
    \psi_{\ell',j'}^{i'}(\tilde{z}_{\ell,j}^{i}) = 0~\forall \ell<\ell'~~\text{and}~~\psi_{\ell,j'}^{i'}(\tilde{z}_{\ell,j}^{i}) = \delta_{ii'}\delta_{jj'}.
\end{align}
This property is used to determine the coefficients $\{b_{\ell,j}\}$ of a function $f$ from $f(\widetilde{z}_{\ell,j})$ via a triangular solve (see \cite[Section 2.3]{tao2021AdaptiveHighorder}). 

\subsubsection{Multi-dimension construction}
\label{subsec:interp_basis_multi_construction}

We extend to $d$ dimensions in the same manner as the Alpert wavelets. 
Given $(\bmell,\bmj)$, denote the $(k+1)^d$ interpolation points by
\begin{equation}\label{eqn:Z_ell_j}
    \widetilde{\bmZ}_{\bmell,\bmj} = \{\widetilde{\bmz}_{\bmell,\bmj}^{\bmi} = (\widetilde{z}_{\ell_1,j_1}^{i_1},\ldots,\widetilde{z}_{\ell_d,j_d}^{i_d}):1\leq i_m\leq k+1\}.
\end{equation}
The $d$-dimensional interpolatory multiwavelets are given by
\begin{equation}\label{eqn:interpolets}
    \psi_{\bmell,\bmj}^{\bmi}(\bmz) = \prod_{m=1}^d \psi_{\ell_m,j_m}^{i_m}(z_m).
\end{equation}
Let $b_{\bmell,\bmj}:=b_{\bmell,\bmj}^{\bmi}$ denote the coefficient expansion with respect to $\psi_{\bmell,\bmj}^{\bmi}$.
Note that the interpolation property \eqref{eqn:wavelet_sparsity} extends to the multidimensional case through the following proposition of which we omit the proof.
\begin{prop}\label{prop:wavelet_sparsity_multidim}
The functions $\{\psi_{\bmell,\bmj}\}_{(\bmell,\bmj)}$ satisfy the following property:
\begin{enumerate}
    \item$\psi_{\bmell,\bmj}^{\bmi}(\widetilde{\bmz})=0$ for all $\widetilde{\bmz}\in\widetilde{\bmZ}_{\bmell',\bmj'}$ where $(\bmell',\bmj')$ is an ancestor of $(\bmell,\bmj)$.
    \label[property]{prop:wavelet_sparsity_multidim:1}
    \item $\psi_{\bmell,\bmj}^{\bmi'}(\widetilde{\bmz}_{\bmell,\bmj}^{\bmi})=\delta_{\bmi\bmi'}$.
    \label[property]{prop:wavelet_sparsity_multidim:2}
    \item Let $f=\sum_{(\bmell,\bmj)\in\Theta}\sum_{\bmi}b_{\bmell,\bmj}^{\bmi}\psi_{\bmell,\bmj}^{\bmi}$ for an ancestor complete grid $\Theta$, then $b_{\bmell,\bmj}$ is a linear combination of $f(\widetilde{\bmz})$ for $\widetilde{\bmz}\in\widetilde{\bmZ}_{\bmell,\bmj}$ and $b_{\bmell',\bmj'}$ for all ancestors $(\bmell',\bmj')$ of $(\bmell,\bmj)$.
    \label[property]{prop:wavelet_sparsity_multidim:3}
\end{enumerate}
\end{prop}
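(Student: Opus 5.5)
The plan is to reduce everything to the one-dimensional property \eqref{eqn:wavelet_sparsity} through the tensor-product structure \eqref{eqn:interpolets} and \eqref{eqn:Z_ell_j}. Evaluating a product at a tensor point gives
\[
\psi_{\bmell,\bmj}^{\bmi'}(\widetilde{\bmz}_{\bmell',\bmj'}^{\bmi}) = \prod_{m=1}^d \psi_{\ell_m,j_m}^{i'_m}(\widetilde z_{\ell'_m,j'_m}^{i_m}).
\]
So each property becomes a statement about one-dimensional factors: at least one factor must vanish, or every factor must be a Kronecker delta.

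\Cref{prop:wavelet_sparsity_multidim:2} is immediate, since each factor equals $\delta_{i_mi'_m}$ by the second identity in \eqref{eqn:wavelet_sparsity}.

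For \Cref{prop:wavelet_sparsity_multidim:1}, first characterize ancestors using \Cref{defn:parents_and_children}. One parent step lowers exactly one coordinate level by one, mapping $j_m\mapsto\lfloor j_m/2\rfloor$ when $\ell_m\ge2$ and sending level $1$ to level $0$. By induction, an ancestor $(\bmell',\bmj')$ therefore has $\ell'_m\le\ell_m$ for all $m$, and $\ell'_m<\ell_m$ for at least one $m$. In that dimension the factor $\psi_{\ell_m,j_m}^{i'_m}(\widetilde z_{\ell'_m,j'_m}^{i_m})$ is zero by the first identity in \eqref{eqn:wavelet_sparsity}, so the whole product vanishes.

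One point needs care: \eqref{eqn:wavelet_sparsity} must hold at coarser points of \emph{any} index $j$, not only nested ones. This follows because $\widetilde{Z}_{\ell'}\subset Z_{\ell-1}$ for $\ell'<\ell$. Using the nesting $Z_0\subset Z_1$ and the scaling, $\psi_{\ell,j}$ vanishes at all of $Z_{\ell-1}$: inside its support these are rescaled copies of $\widetilde Z_0$, where $\varphi_i$ vanishes, and outside its support the function is zero. The one-sided points of \Cref{tab:interp_points} are handled by interpreting evaluations as the appropriate traces.

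For \Cref{prop:wavelet_sparsity_multidim:3}, evaluate the expansion of $f$ at a point $\widetilde{\bmz}^{\bmi}_{\bmell,\bmj}$. Each term $(\bmell'',\bmj'')\in\Theta$ then falls into one of three cases.
\begin{enumerate}
\item If $(\bmell'',\bmj'')=(\bmell,\bmj)$, the term contributes $b_{\bmell,\bmj}^{\bmi}$, by \Cref{prop:wavelet_sparsity_multidim:2}.
\item If $\ell''_m>\ell_m$ in some dimension, the term vanishes by the one-dimensional fact above.
\item Otherwise $\bmell''\le\bmell$ componentwise with $\bmell''\ne\bmell$, or $\bmell''=\bmell$ with $\bmj''\ne\bmj$.
\end{enumerate}

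In the third case, for levels $\bmell''=\bmell$ with $\bmj''\neq\bmj$, the supports at equal levels are disjoint dyadic cells, so the evaluation is zero. A careful check is needed for $\ell_m=0$ or $1$, where $j_m=0$ and the cells are single. For $\bmell''\le\bmell$, the factor $\psi_{\ell''_m,j''_m}(\widetilde z_{\ell_m,j_m})$ vanishes unless $\widetilde z_{\ell_m,j_m}$ lies in the support of $\psi_{\ell''_m,j''_m}$, and that happens only when $j''_m$ is the dyadic ancestor index of $j_m$. Hence the surviving terms are exactly those with $(\bmell'',\bmj'')$ an ancestor of $(\bmell,\bmj)$.

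Solving gives
\[
b_{\bmell,\bmj}^{\bmi}=f(\widetilde{\bmz}_{\bmell,\bmj}^{\bmi})-\sum_{\text{ancestors}}\sum_{\bmi''}b^{\bmi''}_{\bmell'',\bmj''}\,\psi^{\bmi''}_{\bmell'',\bmj''}(\widetilde{\bmz}^{\bmi}_{\bmell,\bmj}),
\]
which is the claimed linear combination. Ancestor completeness guarantees that all these ancestors are in $\Theta$, so the recursion closes and the coefficients can be computed by the triangular solve.

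The main obstacle I expect is the support bookkeeping in the third case. Two things must be verified there: that componentwise-lower levels with a nonzero factor in every coordinate are precisely the ancestors, and that the one-sided evaluations at $z=\tfrac12$ are consistent with the supports.
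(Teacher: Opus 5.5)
Your proof is correct and is the tensor-product reduction to the one-dimensional identity \eqref{eqn:wavelet_sparsity} that the paper has in mind; the paper states this proposition without proof and only remarks that the one-dimensional property extends to several dimensions. Your key steps are all sound: the characterization of ancestors (componentwise lower levels, strictly lower in at least one coordinate, dyadically projected indices), the support argument showing that only ancestors survive in Property 3, the resulting triangular recursion for $b_{\bmell,\bmj}$, and your treatment of the one-sided points as traces.
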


\Cref{prop:interp_equiv} gives the following representation between function values on $\widetilde{\bmZ}$ and basis coefficients for $g_{\bmell,\bmj}$ and $\psi_{\bmell,\bmj}$.
The equivalences shown are vital for the interpolation procedure in \Cref{subsec:adaptive_interpolation}.

\begin{prop}\label{prop:interp_equiv}
Let $\Theta$ be an ancestor complete set.
Denote $\widetilde{\bmZ}_{\Theta}=\cup_{(\bmell,\bmj)\in\Theta}\widetilde{\bmZ}_{\bmell,\bmj}$.
Let $f\in\bmV_\Theta$, then $f$ is uniquely determined by any of the following three sets
\begin{equation}
    f(\widetilde{\bmZ}_{\Theta}):=\{f(\widetilde{\bmz}):\widetilde{\bmz}\in\widetilde{\bmZ}_{\Theta}\},\qquad b_\Theta:=\{b_{\bmell,\bmj}:(\bmell,\bmj)\in\Theta\},\qquad\text{and}\qquad c_\Theta:=\{c_{\bmell,\bmj}:(\bmell,\bmj)\in\Theta\}.
\end{equation}
\end{prop}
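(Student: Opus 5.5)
The plan is to use linear algebra on the finite-dimensional space $\bmV_\Theta$. Counting dimensions comes first. By \Cref{defn:adapt_grid}, $\dim(\bmV_\Theta)=M(k+1)^d$ with $M=|\Theta|$, and the Alpert multiwavelets $\{g_{\bmell,\bmj}^{\bmi}\}_{(\bmell,\bmj)\in\Theta}$ form an orthonormal basis of $\bmV_\Theta$ by \eqref{eqn:adaptive_grid_def} and \eqref{eqn:wavelet_orthonormality}. This gives uniqueness of $f$ from $c_\Theta$ immediately: $c_{\bmell,\bmj}^{\bmi}=(f,g_{\bmell,\bmj}^{\bmi})$.

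For $b_\Theta$, I need $\{\psi_{\bmell,\bmj}^{\bmi}\}_{(\bmell,\bmj)\in\Theta}$ to be a basis of $\bmV_\Theta$. I would argue one dimension and one level at a time. In 1D, $\{\psi_{\ell,j}^i\}_{i}$ spans the same local piece as $\{g_{\ell,j}^i\}_i$, namely the restriction of $W_\ell$ to the support of index $j$. This holds because $\varphi_i\in W_1$ and both families are shifts and rescalings of $k+1$ linearly independent functions on the same support. Tensorizing, $\spann_{\bmi}\{\psi_{\bmell,\bmj}^{\bmi}\}=\bmW_{\bmell,\bmj}$. Summing over $\Theta$ then gives $\bmV_\Theta$, so $b_\Theta$ is a coordinate vector for $f$ in a basis and determines $f$ uniquely.

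Note on this step: the paper defines $\psi_{\ell,j}^i$ as spanning $W_\ell$, but $\varphi_i\in W_1$ need not be orthogonal to $V_0$. If $W_\ell$ is meant as the orthogonal complement, the spans can differ. In that case I would instead show directly that $\spann\{\psi_{\bmell,\bmj}:(\bmell,\bmj)\in\Theta\}=\bmV_\Theta$ for ancestor-complete $\Theta$. The argument is a triangular change of basis: each $\psi_{\ell,j}^i$ equals an element of $W_{\ell,j}$ plus a combination of coarser functions supported on its ancestors. Ancestor completeness keeps every such correction inside $\bmV_\Theta$. This is the step I expect to be the main obstacle, and I would settle it by an induction on $|\bmell|_1$ along the parent relation of \Cref{defn:parents_and_children}.

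For $f(\widetilde{\bmZ}_\Theta)$, note that $|\widetilde{\bmZ}_\Theta|=M(k+1)^d=\dim\bmV_\Theta$ by \eqref{eqn:Z_ell_j}, since the sets $\widetilde{\bmZ}_{\bmell,\bmj}$ are disjoint by the nesting $\widetilde{Z}_\ell=Z_\ell\setminus Z_{\ell-1}$. It therefore suffices to show that the evaluation map $f\mapsto f(\widetilde{\bmZ}_\Theta)$ is injective on $\bmV_\Theta$, or equivalently that the collocation matrix $A=[\psi_{\bmell,\bmj}^{\bmi}(\widetilde{\bmz}_{\bmell',\bmj'}^{\bmi'})]$ is invertible. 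To see this, order $\Theta$ by total level $|\bmell|_1$. \Cref{prop:wavelet_sparsity_multidim}, \Cref{prop:wavelet_sparsity_multidim:1,prop:wavelet_sparsity_multidim:2}, then make $A$ block triangular with identity diagonal blocks. One must also check that $\psi_{\bmell,\bmj}$ vanishes at points of non-ancestor elements of equal or lower level. Off the support this is immediate. For overlapping supports it follows from the 1D relation \eqref{eqn:wavelet_sparsity} applied coordinatewise: some coordinate $m$ has $\ell'_m<\ell_m$, or $\ell'_m=\ell_m$ with $j'_m\neq j_m$, and the 1D factor vanishes there.

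Equivalently, \Cref{prop:wavelet_sparsity_multidim:3} gives an explicit recursive (triangular) map from $f(\widetilde{\bmZ}_\Theta)$ to $b_\Theta$, so $f(\widetilde{\bmZ}_\Theta)$ determines $b_\Theta$ and hence $f$. Combining the three steps, the maps between $f$, $f(\widetilde{\bmZ}_\Theta)$, $b_\Theta$ and $c_\Theta$ are linear bijections, which proves \Cref{prop:interp_equiv}.
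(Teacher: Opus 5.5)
Your proposal is correct. For $c_\Theta$ and for the point values it follows the paper. The paper proves unisolvency of $f(\widetilde{\bmZ}_\Theta)$ by strong induction along the ancestor relation via \Cref{prop:wavelet_sparsity_multidim:3} of \Cref{prop:wavelet_sparsity_multidim}. That is the same triangular solve as your block-triangular collocation matrix ordered by $|\bmell|_1$, and your extra check on non-ancestors of equal or lower total level is exactly what makes that ordering work.

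The genuine difference is the $b_\Theta$ step. The paper argues as in your first paragraph: it asserts that $\{\varphi_i\}$ is a basis of $W_1$, hence $\spann_{\bmi}\{\psi^{\bmi}_{\bmell,\bmj}\}=\bmW_{\bmell,\bmj}$. Your caveat is well founded. With $W_1$ the orthogonal complement in \eqref{eqn:W_def}, this fails already for the paper's $k=1$ nodes $\widetilde Z_0=\{\tfrac13,\tfrac23\}$, $\widetilde Z_1=\{\tfrac16,\tfrac56\}$. There $\varphi_1=2-6z$ on $(0,\tfrac12)$ and $\varphi_1=0$ on $(\tfrac12,1)$, which has mean $\tfrac14\neq 0$ and so is not orthogonal to $V_0$. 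Moreover, the function equal to $z-\tfrac16$ on $(0,\tfrac12)$ and $z-\tfrac56$ on $(\tfrac12,1)$ lies in $W_1$ and vanishes at both nodes. So on the non-ancestor-complete grid $\Theta=\{(1,0)\}$, point values do not determine $f\in\bmV_\Theta$.

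Your fallback is therefore what actually establishes that $b_\Theta$ is a coordinate system on $\bmV_\Theta$. You split each 1D factor, via the local $L^2$ projection onto $V_{\ell-1}$, into a $W_{\ell,j}$ part plus Alpert components on its 1D ancestors, tensorize, and use ancestor completeness to keep every term in $\bmV_\Theta$. As stated, this only gives $\spann\{\psi^{\bmi}_{\bmell,\bmj}\}_{(\bmell,\bmj)\in\Theta}\subseteq\bmV_\Theta$. The linear independence needed for equality comes for free from the invertibility of your matrix $A$, which uses only \eqref{eqn:wavelet_sparsity} and holds for every $\Theta$; the dimension count then closes the argument.

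In your organization, ancestor completeness is used exactly once: in identifying the $\psi$-span with $\bmV_\Theta$, which is where it is genuinely needed. In the paper it enters only through \Cref{prop:wavelet_sparsity_multidim:3}, whose expansion of $f$ in the $\psi$'s already presupposes that identification.
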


\begin{proof}
    Since the Alpert wavelets $\{g_{1,0}^i\}_{i=1}^{k+1}$ and the interpolatory wavelets $\{\varphi^i\}_{i=1}^{k+1}$ both form a basis for $W_1$, then $\{g_{\bmell,\bmj}^{\bmi}\}_{\bmi}$ and $\{\psi_{\bmell,\bmj}^{\bmi}\}_{\bmi}$ both form a basis for $W_{\bmell,\bmj}$.  Hence, their respective collections over $(\bmell,\bmj)\in\Theta$ form a basis for $\bmV_\Theta$.
    Therefore both $b_\Theta$ and $c_\Theta$ uniquely determine from $f\in\bmV_\Theta$.

    To show the unisolvency of $f(\widetilde{\bmZ}_\Theta)$, it is sufficient to show that if $f\in \bmV_\Theta$ such that $f(\widetilde{\bmz})=0$ for all $\widetilde{\bmz}\in \widetilde{\bmZ}_{\Theta}$, then $f\equiv 0$.
    Since $b_{\bf{0,0}}$ are formed from the Lagrange nodal basis on $\widetilde{\bmZ}_{\bf 0,0}$, then $b_{\bf{0,0}}={\bf 0}$.
    We then proceed to all children by strong induction. 
    Fix $(\bmell,\bmj)$ and assume $b_{\bmell',\bmj'}={\bf 0}$ for all ancestors $(\bmell',\bmj')$ of $(\bmell,\bmj)$. 
    Then by \Cref{prop:wavelet_sparsity_multidim:3} of \Cref{prop:wavelet_sparsity_multidim}, $b_{\bmell,\bmj}$ only depends linearly on $f(\widetilde{\bmz})$ for $\widetilde{\bmz}\in\widetilde{\bmZ}_{\bmell,\bmj}$ which are all 0.  Hence, using \Cref{prop:wavelet_sparsity_multidim:1} of \Cref{prop:wavelet_sparsity_multidim}, $b_{\bmell,\bmj}={\bf 0}$.  Therefore $b_{\Theta}={\bf 0}$ and thus $f=0$ in $\bmV_\Theta$.
\end{proof}

\subsubsection{Adaptive Interpolation}
\label{subsec:adaptive_interpolation}

We now specify the adaptive interpolation that uses the function values on $\widetilde{\bmZ}_\Theta$:

\begin{defn}[Adaptive Interpolation]\label{defn:interpolation}
Let $\Theta$ be an adaptive index set. 
Given a function $\mathcal{F}(f)$ and $f\in\bmV_\Theta$ we construct the interpolation map $f\mapsto I_\Theta\mathcal{F}(f)\in\bmV_\Theta$ with respect to the Alpert basis, $\{g_{\bmell,\bmj}\}_{(\bmell,\bmj)\in\Theta}$, by the following steps:
\begin{enumerate}
    \item Function evaluation: Evaluate $f$ at $\widetilde{\bmZ}_\Theta$.\label[step]{defn:interpolation:evaluate}
    \item Collocation: Build $\mathcal{F}(f(\widetilde{\bmZ}_{\Theta})) = \{\mathcal{F}(f(\widetilde{\bmz})):\widetilde{\bmz}\in\widetilde{\bmZ}_{\Theta}\}$.
    \label[step]{defn:interpolation:collocate}
    \item Map to interpolation wavelet coefficients: Determine $b_\Theta$ given $\mathcal{F}(f(\widetilde{\bmZ}_{\Theta}))$.
    \label[step]{defn:interpolation:to_interpolets}
    \item Map to Alpert wavelet coefficients: Determine $c_\Theta$ from $b_\Theta$.
    \label[step]{defn:interpolation:to_alpert}
\end{enumerate}
\end{defn}

\begin{remark}
Concerning the adaptive interpolation $I_\Theta$ in \Cref{defn:interpolation}:
\begin{enumerate}
    \item \Cref{prop:interp_equiv} shows $I_\Theta$ is well-defined.  
    \item We refer the reader to \cite{tao2021AdaptiveHighorder} for error estimates.
    \item \Cref{defn:interpolation:evaluate,defn:interpolation:to_interpolets,defn:interpolation:to_alpert} can be computed efficiently, e.g., see \cite[Section 3]{tao2021AdaptiveHighorder} for an efficient implementation of \Cref{defn:interpolation:to_interpolets}.
    However, we omit the exact details of the steps for a future work.
    The efficiency of \Cref{defn:interpolation:collocate} is dependent on $\mathcal{F}$.
\end{enumerate}    
\end{remark}

\subsection{Discrete Maxwellian and hybrid interpolation}\label{subsec:discrete_maxwellian}

Using the interpolation procedure in \Cref{subsec:adaptive_interpolation}, we could build a discrete Maxwellian by applying the map $\mathcal{F}(f)(\bmx,\bmv) = M[\bmrho_f(\bmx)](\bmv)$. 
However, such a procedure preserves the discrete conservation property of the collision operator \eqref{eqn:BGK_conservation} only up to the consistency error of the grid.
This error is small for a tensor velocity grid, but for an adaptive sparse grid, where velocity resolution is culled for compression, the error can be several orders above the truncation tolerance (see \Cref{tab:conservation_errors}) and thus not sufficient for a robust method.
We note that the error incurred by $I_\Theta$ (and thus the conservation error) could be reduced by applying a higher polynomial order interpolation operator as done in \cite{huang2020adaptive}, but this does not fully resolve the conservation issue.

To build a conservative discrete Maxwellian, we describe a hybrid interpolation procedure that employs interpolation only in position space, but exploits the separability of the Maxwellian in velocity, i.e., for a given $\bmx$, $M[\bmrho(\bmx)](\cdot)$ is a product of single dimension functions in $\bmv$ (see \eqref{eqn:hybrid_interpolation:analytic_integral} in \Cref{defn:hybrid_interpolation}).

Using the above notation, let $\bmz=(\bmx,\bmv)\in\mathbb{R}^{2d}$.
Let $\Theta=\{(\bm{\ell_x,\ell_v,j_x,j_v})\}$ be an adaptive index set and let $\bmV_{\Theta}$ denote the corresponding adaptive sparse-grid space. 
Define the position grid $\Theta_{\bmx}$ by
\begin{equation}\label{eqn:position_grid}
    \Theta_{\bmx}=\Big\{(\bm{\ell_x,j_x}):(\bm{\ell_x,0,j_x,0})\in\Theta\Big\}.
\end{equation}
and associated position adaptive sparse-grid DG space $\bmV_{\Theta_{\bmx}}\subset V_{\bmx,N}$ and adaptive interpolation points $\widetilde{\bmX}_{\Theta_{\bmx}}$.

\begin{defn}[Hybrid Interpolation]\label{defn:hybrid_interpolation}
Let $\Theta$ be an ancestor complete adaptive index set. 
We define the adaptive hybrid interpolation of the Maxwellian  $\bmrho\in[\bmV_{\Theta_{\bmx}}]^{d+2}\mapsto M_\Theta[\bmrho]\in \bmV_{\Theta}$ by the following process:
\begin{enumerate}
    \item Evaluate $\bmrho(\widetilde{\bmx})$ for all $\widetilde{\bmx}\in\widetilde{\bmX}_{\Theta_{\bmx}}$.
    \item For every $(\bmell,\bmj)=(\bmell_{\bmx},\bmell_{\bmv},\bmj_{\bmx},\bmj_{\bmv})\in\Theta$, build the $(k+1)^{2d}$ scalars, indexed with $\bmi=(\bmi_{\bmx},\bmi_{\bmv})$, by
    \begin{equation}\label{eqn:hybrid_interpolation:analytic_integral}
    \begin{split}
    a_{\bmell,\bmj}^{\bmi} &= (M[\bmrho(\widetilde{\bmx}_{\bmell_{\bmx},\bmj_{\bmx}}^{\bmi_{\bmx}})](\cdot),g_{\bmell_{\bmv},\bmj_{\bmv}}^{\bmi_{\bmv}}(\cdot))_{\Omega_v} \\
    &= n(\widetilde{\bmx}_{\bmell_{\bmx},\bmj_{\bmx}}^{\bmi_{\bmx}})\prod_{m=1}^d\int_{-L_v}^{L_v} \frac{1}{\sqrt{2\pi\theta(\widetilde{\bmx}_{\bmell_{\bmx},\bmj_{\bmx}}^{\bmi_{\bmx}})}}\exp\bigg(\frac{-(v_m-u_m(\widetilde{\bmx}_{\bmell_{\bmx},\bmj_{\bmx}}^{\bmi_{\bmx}}))^2}{2\theta(\widetilde{\bmx}_{\bmell_{\bmx},\bmj_{\bmx}}^{\bmi_{\bmx}})}\bigg)g_{\ell_{v_m},j_{v_m}}^{i_{v_m}}(v_m)\,\dx{v_m}.
    \end{split}
    \end{equation}
    where $\widetilde{\bmx}_{\bmell_{\bmx},\bmj_{\bmx}}^{\bmi_{\bmx}}\in \widetilde{\bmX}_{\bmell_{\bmx},\bmj_{\bmx}}$.
    
    If $\bmell_{v_i}$ is 0, then we extend the respective velocity integral in \eqref{eqn:hybrid_interpolation:analytic_integral} to $\mathbb{R}$ and extend the Legendre polynomials $g_{0,0}^i\in L^2(-L_v,L_v)$ to a global polynomial on $\mathbb{R}$.
    This will guarantee the conservation invariants for the discrete collision operator, see \eqref{eqn:BGK_conservation}.
    \label[step]{defn:hybrid_interpolation:evaluation}

    \item Determine the coefficient expansion with respect to the basis $\{\psi_{\bmell_{\bmx},\bmj_{\bmx}}g_{\bmell_{\bmv},\bmj_{\bmv}}\}_{(\bmell,\bmj)\in\Theta}$.
    This is achieved by performing \Cref{defn:interpolation}~\Cref{defn:interpolation:to_interpolets} on only the position dimensions.
    Let $\beta_{\bmell,\bmj}$ denote the expansion with respect to $\{\psi_{\bmell_{\bmx},\bmj_{\bmx}}g_{\bmell_{\bmv},\bmj_{\bmv}}\}$
    \label[step]{defn:hybrid_interpolation:to_interpolets}

    \item Determine   the coefficient expansion with respect to the Alpert multiwavelet basis $g_{\bmell,\bmj}$.
    This is achieved by performing \Cref{defn:interpolation}~\Cref{defn:interpolation:to_alpert} on only the position dimensions.
    \label[step]{defn:hybrid_interpolation:to_alpert}
\end{enumerate}
\end{defn}

\begin{remark}
    The integrals in \eqref{eqn:hybrid_interpolation:analytic_integral} are exactly evaluated using error functions.
\end{remark}

The preservation of collision invariants \eqref{eqn:BGK_conservation} is now shown.
\begin{prop}\label{prop:discrete_maxwellian_conservation}
Let $\Theta$ be an ancestor complete grid, $k\geq 2$, and $f\in \bmV_\Theta$.
Suppose $M_\Theta[\bmrho_f]\in\bmV_\Theta$ is constructed with \Cref{defn:hybrid_interpolation},
then the following holds:
\begin{equation}\label{eqn:discrete_maxwellian_conservation:0}
    (M_\Theta[\bmrho_f],{\bfe})_{\Omega_v}=\bmrho_f.
\end{equation}
\end{prop}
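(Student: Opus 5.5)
We need to show that $(M_\Theta[\bmrho_f],\bfe)_{\Omega_v}=\bmrho_f$ as functions of $\bmx$. My plan has three steps: reduce the velocity moments to the level-zero velocity coefficients, identify these coefficients on the position interpolation nodes, and then use unisolvence of position interpolation.

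\textbf{Step 1: only $\bmell_{\bmv}=\mathbf 0$ contributes.} Since $k\geq 2$, each component of $\bfe(\bmv)$ (namely $1$, $v_m$, and $\tfrac12|\bmv|^2=\tfrac12\sum_m v_m^2$) is a sum of products of one-dimensional polynomials of degree at most $2\le k$. Each such product lies in the level-zero velocity space $W_0^{\otimes d}=\spann\{g^{\bmi_{\bmv}}_{\mathbf 0,\mathbf 0}\}$. By the orthogonality \eqref{eqn:wavelet_orthonormality}, every velocity basis function $g_{\bmell_{\bmv},\bmj_{\bmv}}$ with $\bmell_{\bmv}\neq\mathbf 0$ is $L^2(\Omega_v)$-orthogonal to $\bfe$.

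Write $M_\Theta[\bmrho_f]=\sum_{(\bmell,\bmj)\in\Theta}\sum_{\bmi}\beta^{\bmi}_{\bmell,\bmj}\,\psi^{\bmi_{\bmx}}_{\bmell_{\bmx},\bmj_{\bmx}}(\bmx)\,g^{\bmi_{\bmv}}_{\bmell_{\bmv},\bmj_{\bmv}}(\bmv)$, using \Cref{defn:hybrid_interpolation}~\Cref{defn:hybrid_interpolation:to_interpolets}. The final conversion to Alpert coefficients acts only in $\bmx$, so it does not change the function. Then $(M_\Theta[\bmrho_f],\bfe)_{\Omega_v}$ involves only the terms with $(\bmell_{\bmv},\bmj_{\bmv})=(\mathbf 0,\mathbf 0)$, i.e.\ only indices $(\bmell_{\bmx},\mathbf 0,\bmj_{\bmx},\mathbf 0)\in\Theta$, which are exactly those with $(\bmell_{\bmx},\bmj_{\bmx})\in\Theta_{\bmx}$ by \eqref{eqn:position_grid}. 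Hence this moment equals
\[
\textstyle\sum_{(\bmell_{\bmx},\bmj_{\bmx})\in\Theta_{\bmx}}\sum_{\bmi}\beta^{\bmi}_{\bmell,\bmj}\,\psi^{\bmi_{\bmx}}_{\bmell_{\bmx},\bmj_{\bmx}}(\bmx)\,(g^{\bmi_{\bmv}}_{\mathbf 0,\mathbf 0},\bfe)_{\Omega_v},
\]
which is an element of $[\bmV_{\Theta_{\bmx}}]^{d+2}$. The claim therefore reduces to an identity between two elements of $[\bmV_{\Theta_{\bmx}}]^{d+2}$, because $\bmrho_f$ also lies there for $f\in\bmV_\Theta$ by the same orthogonality argument.

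\textbf{Step 2: the moment agrees with $\bmrho_f$ at the position nodes.} Ancestor completeness of $\Theta$ passes to $\Theta_{\bmx}$: a parent of $(\bmell_{\bmx},\mathbf 0,\bmj_{\bmx},\mathbf 0)$ changes only position indices. So \Cref{prop:interp_equiv} applies on $\Theta_{\bmx}$, and it suffices to show the moment equals $\bmrho_f(\widetilde{\bmx})$ for every $\widetilde{\bmx}\in\widetilde{\bmX}_{\Theta_{\bmx}}$.

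The hybrid interpolation is constructed so that, for fixed velocity index $(\mathbf 0,\mathbf 0,\bmi_{\bmv})$, the position coefficients $\beta$ are the position-only interpolant of the nodal data $a^{\bmi}_{\bmell,\bmj}$ of \eqref{eqn:hybrid_interpolation:analytic_integral}. By \Cref{prop:wavelet_sparsity_multidim} and the triangular structure, evaluating the expansion at a node recovers the data there:
\[
\textstyle\sum\beta\,\psi(\widetilde{\bmx})=\big(M[\bmrho(\widetilde{\bmx})],g^{\bmi_{\bmv}}_{\mathbf 0,\mathbf 0}\big).
\]
For level-zero velocity, this inner product is taken over $\mathbb{R}^d$ with the Legendre polynomials extended globally.

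\textbf{Step 3: sum over $\bmi_{\bmv}$ and conclude.} Write $\bfe=\sum_{\bmi_{\bmv}}(\bfe,g^{\bmi_{\bmv}}_{\mathbf 0,\mathbf 0})_{\Omega_v}\,g^{\bmi_{\bmv}}_{\mathbf 0,\mathbf 0}$, valid because $\bfe$ is a polynomial in the span. Summing over $\bmi_{\bmv}$ then gives
\[
\int_{\mathbb{R}^d}M[\bmrho(\widetilde{\bmx})]\,\bfe\dx{\bmv}=\bmrho(\widetilde{\bmx}),
\]
by the exact moment identities of a Maxwellian. Unisolvence then gives equality everywhere.

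\textbf{Main obstacle.} The delicate point is in Step 3: the extension to $\mathbb{R}$ makes the pairing global, while the moment on the left of \eqref{eqn:discrete_maxwellian_conservation:0} is over $\Omega_v$. One must check that the polynomial expansion identity holds identically as polynomials, so it transfers to $\mathbb{R}$. With that, the coefficient pairing is consistent and the argument closes.
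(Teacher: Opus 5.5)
Your proposal is correct and follows essentially the same route as the paper. You use $k\ge 2$ to expand $\bfe$ in the level-zero Legendre basis, note that the velocity moment of $M_\Theta[\bmrho_f]$ sees only the $\bmell_{\bmv}=\mathbf 0$ slice, identify its nodal values with $\sum_{\bmi_{\bmv}}\gamma_r^{\bmi_{\bmv}}a^{\bmi_{\bmx},\bmi_{\bmv}}_{\bmell_{\bmx},\mathbf 0,\bmj_{\bmx},\mathbf 0}$, invoke the $\mathbb{R}^d$ extension to get exact Maxwellian moments, and conclude by unisolvence (\Cref{prop:interp_equiv}) on $\Theta_{\bmx}$. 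Your explicit checks that $\Theta_{\bmx}$ inherits ancestor completeness and that both sides lie in $[\bmV_{\Theta_{\bmx}}]^{d+2}$ fill in points the paper leaves implicit.
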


\begin{proof}
    Fix $r=0,\ldots,d+1$, and denote $\bmrho_r^* = (M_\Theta[\bmrho_f],{\bfe}_r)_{\Omega_v}$. Decompose $\bmW_{\bf 0,0}=\bmW_{\bf 0,0}^{\bmx}\otimes\bmW_{\bf 0,0}^{\bmv}$.  Since $k\geq 2$, ${\bfe}\in\bmW_{\bf 0,0}^{\bmv}$.
    Thus ${\bfe}_r$ is a linear combination of $g_{\bf 0,0}^{\bmi_{\bmv}}$, namely
    \begin{equation} \label{eqn:discrete_maxwellian_conservation:1}
        {\bfe}_r(\bmv) = \sum_{\bmi_{\bmv}}\gamma_r^{\bmi_{\bmv}}g_{\bf 0,0}^{\bmi_{\bmv}}(\bmv).
    \end{equation}
    Thus the coefficients of $\bmrho_r^*\in \bmV_{\Theta_{\bmx}}$, denoted $\bar{c}_{\Theta_{\bmx}}$, with respect to the (orthonormal) Alpert wavelet basis for $(\bmell_{\bmx},\bmj_{\bmx})$ is
    \begin{equation} \label{eqn:discrete_maxwellian_conservation:2}
        \bar{c}_{\bmell_{\bmx},\bmj_{\bmx}}^{\bmi_{\bmx}} = (\bmrho_r^*,g_{\bmell_{\bmx},\bmj_{\bmx}}^{\bmi_{\bmx}})_{\Omega_x}=(M_\Theta[\bmrho_f],g_{\bmell_{\bmx},\bmj_{\bmx}}^{\bmi_{\bmx}}{\bfe}_r)_{\Omega}
        = \sum_{\bmi_{\bmv}}\gamma_r^{\bmi_{\bmv}} (M_\Theta[\bmrho_f],g_{\bmell_{\bmx},\bmj_{\bmx}}^{\bmi_{\bmx}} g_{\bf 0,0}^{\bmi_{\bmv}})_{\Omega}
        = \sum_{\bmi_{\bmv}}\gamma_r^{\bmi_{\bmv}}c_{\bmell_{\bmx},{\bf 0},\bmj_{\bmx},{\bf 0}}^{\bmi_{\bmx},\bmi_{\bmv}}.
    \end{equation}
    where $c_{\bmell,\bmj}$ denote the coefficients of $M_\Theta[\bmrho_f]$ in $g_{\bmell,\bmj}$.
    Since the hybrid interpolation only acts on the position dimensions, the map $\{\tilde{c}_{\ell_{\bmx},\bmj_{\bmx}}:(\ell_{\bmx},\bmj_{\bmx})\in\Theta_{\bmx}\}\to\{\bmrho_r^*(\widetilde{\bmx}):\widetilde{\bmx}\in \widetilde{\bmX}_{\Theta_{\bmx}}\}$ also maps $\{c_{\bmell,\bmj}:(\bmell,\bmj)\in\Theta\}\to \{a_{\bmell,\bmj}:(\bmell,\bmj)\in\Theta\}$, where $a_{\bmell,\bmj}$ is given in \eqref{eqn:hybrid_interpolation:analytic_integral}.
    Thus evaluating $\bmrho_r^*$ at $\widetilde{\bmx}_{\bmell_{\bmx},\bmj_{\bmx}}^{\bmi_{\bmx}}\in \widetilde{\bmX}_{\bmell_{\bmx},\bmj_{\bmx}}$ is given by
    \begin{equation}\label{eqn:discrete_maxwellian_conservation:3}
        \bmrho_r^*(\widetilde{\bmx}_{\bmell_{\bmx},\bmj_{\bmx}}^{\bmi_{\bmx}}) = \sum_{\bmi_{\bmv}}\gamma_r^{\bmi_{\bmv}}a_{\bmell_{\bmx},{\bf 0},\bmj_{\bmx},{\bf 0}}^{\bmi_{\bmx},\bmi_{\bmv}}.
    \end{equation}
    Since $\bmell_{\bmv}={\bf 0}$ for all the $a$ coefficients in  \eqref{eqn:discrete_maxwellian_conservation:2}, 
    we apply \eqref{eqn:hybrid_interpolation:analytic_integral} over $\bbR^d$ to recover 
    \begin{equation}
        \bmrho_r^*(\widetilde{\bmx}_{\bmell_{\bmx},\bmj_{\bmx}}^{\bmi_{\bmx}}) = \sum_{\bmi_{\bmv}}\gamma_r^{\bmi_{\bmv}}(M[\bmrho_f(\widetilde{\bmx}_{\bmell_{\bmx},\bmj_{\bmx}}^{\bmi_{\bmx}})],g_{\bf 0,0}^{\bmi_{\bmv}})_{\bbR^d} =(M[\bmrho_f(\widetilde{\bmx}_{\bmell_{\bmx},\bmj_{\bmx}}^{\bmi_{\bmx}})],{\bfe}_r)_{\bbR^d} = ({\bmrho_f})_r(\widetilde{\bmx}_{\bmell_{\bmx},\bmj_{\bmx}}^{\bmi_{\bmx}}).
    \end{equation}
    Thus $\bmrho^*(\widetilde{\bmx})=\bmrho_f(\widetilde{\bmx})$ for all $\widetilde{\bmx}\in\widetilde{\bmX}_{\Theta_{\bmx}}$ which implies $\bmrho^*=\bmrho_f$ in $[\bmV_{\Theta_{\bmx}}]^{d+2}$ by \Cref{prop:interp_equiv}.
\end{proof}

\subsection{Refinement and coarsening}
\label{subsec:refine_and_coarsen}

We now list the procedure used for refining and coarsening the adaptive grid $\Theta$ as it deviates from the procedure in \cite{schnake2024SparsegridDiscontinuous}.
One advantage of sparse-grid methods is that the coefficient representation in the wavelet basis decay over finer level, and the DG sparse-grid method leverages this fact to build PDE agnostic coarsening and refinement indicators in time.

To explain the indicators, let $\tau_\text{abs}\geq 0$ and $\tau_\text{rel}\geq 0$ denote the absolute and relative adaptive tolerances, and let $f\in \bmV_{\Theta}$ be a function with coefficient expansion $\{c_{\bmell,\bmj}\}$ in the Alpert basis.
An element $(\bmell,\bmj)\in\Theta$ is marked for refinement provided
\begin{align}\label{eqn:refine_indicator}
    \|c_{\bmell,\bmj}\|_2 \geq \tau_\text{rel}\|f\|_{L^2} + \tau_\text{abs},~~\text{where}~~\|f\|_{L^2}^2=\sum_{(\bmell,\bmj)\in\Theta}\|c_{\bmell,\bmj}\|_2^2.
\end{align}
If $(\bmell,\bmj)$ is marked for refinement, then all children of $(\bmell,\bmj)$ are added to $\Theta$.
An element $(\bmell,\bmj)\in\Theta$ is marked for coarsening, and thus removed from $\Theta$, provided
\begin{align}\label{eqn:coarsen_indicator}
    \|c_{\bmell',\bmj'}\|_2 \leq \tau_\text{rel}\|f\|_{L^2} + \tau_\text{abs}
\end{align}
for all $(\bmell',\bmj')=(\bmell,\bmj)$ and all parents of $(\bmell,\bmj)$.
Unlike the implementation in \cite{schnake2024SparsegridDiscontinuous}, refinement and coarsening happen at the same time, and elements are added (or not removed by coarsening) to ensure $\Theta$ is ancestor complete (see \Cref{defn:parents_and_children}).
We call a round of refinement and coarsening the grid per the above procedure as \textit{adapting} the grid.

Due to the lack of separability of the interpolation procedure, the grid to well approximate $f$ may not be the same to approximate $I_\Theta\mathcal{F}(f)$.
Thus, we also adapt $\Theta$ so that $I_\Theta\mathcal{F}(f)$ is well resolved. 
Let $b_{\bmell,\bmj}$ denote the coefficient expansion with respect to $\psi_{\bmell,\bmj}^{\bmi}$ of $I_\Theta\mathcal{F}(f)$, then we use a similar refinement/coarsening strategy as the Alpert case, but instead using the infinity norm, i.e.,
\begin{equation}\label{eqn:interp_indicator}
    \|b_{\bmell,\bmj}\|_{\infty} \geq(\leq) ~\tau_\text{rel}\max_{(\bmell',\bmj')\in\Theta}\|b_{\bmell',\bmj'}\|_\infty + \tau_\text{abs}.
\end{equation}
For hybrid interpolation, we do the same but with the coefficients $\beta_{\bmell,\bmj}$ as defined in \Cref{defn:hybrid_interpolation} \Cref{defn:hybrid_interpolation:to_interpolets}.
Whether using the standard or hybrid interpolation, we refer to refining $\Theta$ using $\mathcal{F}(f)$ as \emph{non-linear refinement using $\mathcal{F}(f)$.}

We now provide an overview of when the grid is adapted.
Let $N$ be the maximum level of the grid, the initial grid $\Theta$ is set as to produce the sparse-grid DG space $\hat{\bmV}_N$ in \eqref{eqn:sparse_grid_def}.
The grid $\Theta$ goes through several rounds of adapting with respect to $f_h^{0}$ and the non-linear refinement function $I_\Theta \mathcal{F}(f_h^{0})$ until adapting the grid does not add or remove any indices.  We use an analytic form of $f_h^0$ to compute coefficients of any new elements that may be added.

With a well-resolved initial condition, the grid adapts at the end of every IMEX timestep.  Given $f_h^{\mfn}\in \bmV_\Theta$, we advance to $f_h^{\mfn+1}\in \bmV_\Theta$ via \eqref{eqn:IMEX_RK}, and then $\Theta$ is adapted to resolve $f_h^{\mfn+1}$ and $I_\Theta \mathcal{F}(f_h^{\mfn+1})$ using the indicators in \eqref{eqn:refine_indicator}, \eqref{eqn:coarsen_indicator}, and \eqref{eqn:interp_indicator}.
If a new element $(\bmell,\bmj)$ is added to $\Theta$, then its associated coefficients $f_{\bmell,\bmj}^{\mfn+1}$ are set to 0.

\begin{remark}
    If $\|c_{(\bmell_{\bmx},{\bm 0},\bmj_{\bmx},{\bm 0})}\|_2$ is sufficiently small, then $(\bmell_{\bmx},{\bm 0},\bmj_{\bmx},{\bm 0})$ may be removed from $\Theta$ during coarsening, and the moments $\bmrho_f$ could change by a factor of the truncation tolerance within a timestep.
    However, $(\bmell_{\bmx},{\bm 0},\bmj_{\bmx},{\bm 0})$ is only removed provided its coefficients and the coefficients of its parents are below the adaptive threshold.
    This fact means that removed elements usually have size much smaller than the adaptive threshold, and, as verified in \Cref{sec:numerical_experiments}, global conservation quantities are often preserved well below the truncation tolerance.
\end{remark}

We summarize the adaptive sparse-grid workflow from timestep $t^{\mfn}$ to $t^{\mfn+1}$ in \Cref{alg:DG-IMEX}.

\algrenewcommand\algorithmicrequire{\textbf{Input:}}
\algrenewcommand\algorithmicensure{\textbf{Output:}}
\begin{algorithm}[t]
\caption{Adaptive sparse-grid DG--IMEX solver for the BGK model from $t^{\mfn}$ to $t^{\mfn+1}$}
\label{alg:adaptive-bgk}
\begin{algorithmic}[1]
\Require $\Theta^{\mfn}$ and $f_h^{\mfn}\in \bmV_{\Theta^{\mfn}}$.
\Ensure $\Theta^{\mfn+1}$ and $f_h^{\mfn+1}\in \bmV_{\Theta^{\mfn+1}}$.

\State Set $\Theta\gets \Theta^{\mfn}$
\State Calculate $f_h^{(1,*)}\in \bmV_{\Theta}$ from \eqref{eqn:IMEX_RK:ex1}, i.e., solve
\begin{equation}
(f_h^{(1,*)},g_h) = (f_h^{\mfn},g_h) - \dt \AP(f_h^{\mfn},g_h)\quad\forall g_h\in \bmV_\Theta.
\end{equation}
\State Calculate the moments $\bmrho_{f_h^{(1,*)}}\in [\bmV_{\Theta_{\bmx}}]^{d+2}$ and apply the hybrid interpolation map \Cref{defn:hybrid_interpolation} to produce $M_{\Theta}[\bmrho_{f_h^{(1,*)}}]\in \bmV_{\Theta}$.
\State Calculate $f_h^{(1)}\in \bmV_{\Theta}$ from \eqref{eqn:IMEX_RK:im1} via
\begin{equation}
f_h^{(1)} = \frac{1}{1+\nu\dt}f_h^{(1,*)} + \frac{\nu\dt}{1+\nu\dt} M_{\Theta}[\bmrho_{f_h^{(1,*)}}].
\end{equation}
\State Calculate $f_h^{(2,*)}\in \bmV_{\Theta}$ from \eqref{eqn:IMEX_RK:ex2}, i.e., solve
\begin{equation}
(f_h^{(2,*)},g_h) = \tfrac{1}{2}(f_h^{\mathfrak{n}},g_h) + \tfrac{1}{2}\big((f_h^{(1)},g_h)-\Delta{t}\AP(f_h^{(1)},g_h)\big)\quad\forall g_h\in \bmV_\Theta.
\end{equation}
\State Calculate the moments $\bmrho_{f_h^{(2,*)}}\in [\bmV_{\Theta_{\bmx}}]^{d+2}$ and apply the hybrid interpolation map \Cref{defn:hybrid_interpolation} to produce $M_{\Theta}[\bmrho_{f_h^{(2,*)}}]\in \bmV_{\Theta}$.
\State Calculate $f_h^{(2)}\in \bmV_{\Theta}$ from \eqref{eqn:IMEX_RK:im2} via 
\begin{equation}
f_h^{(2)} = \frac{1}{1+\frac12\nu\dt}f_h^{(2,*)} + \frac{\frac12\nu\dt}{1+\frac12\nu\dt} M_{\Theta}[\bmrho_{f_h^{(2,*)}}].
\end{equation}
\State Adapt $\Theta$ based on $f_h^{(2)}$ and the indicators in  \eqref{eqn:refine_indicator} and \eqref{eqn:coarsen_indicator} such that $\Theta$ remains ancestor complete (see \Cref{defn:parents_and_children}).
\State Refine $\Theta$ using the non-linear refinement function $\mathcal{F}(f)$ (usually proportional to $M_\Theta[\bmrho_{f_h^{(2)}}]$) and the indicators in \eqref{eqn:interp_indicator} such that $\Theta$ remains ancestor complete.
\State Set $f_h^{\mfn+1}\gets f_h^{(2)}$ and $\Theta^{\mfn+1}\gets \Theta$.
\end{algorithmic}
\label{alg:DG-IMEX}
\end{algorithm}

\section{Numerical experiments}
\label{sec:numerical_experiments}

The following $2x2v$ and $3x3v$ numerical experiments were computed on a workstation with a 16 core Intel Xeon w5-3433 and 256GB of RAM. 
The simulations were all computed using the C++ Adaptive Sparse-Grid Discretization (ASGarD) library \cite{hahn2024ASGarDAdaptive} compiled using \texttt{gcc11.4.0}.
The timings presented are the wall clock time minus the input/output times to write snapshots of the simulation to the disk.

Below we list general comments that are used for all simulations
\begin{itemize}
\item The maximum explicit timestep for the DG discretization of the operator $\bmv\cdot\grad_{\bmx}$ is 
\begin{equation}\label{eqn:maximum_explicit_timestep}
    \dt_\text{expl} = h_N\frac{1}{d}\frac{1}{L_v}\frac{1}{2k+1},
\end{equation}
where $h_N$ is the smallest mesh length over position dimensions, defined from the maximum level.

\item We report the velocity DoFs per position DoF, averaged per dimension, by
\begin{equation}
    \dofvavg := \bigg(\frac{\text{phase-space DoF}}{\text{position DoF}}\bigg)^{\frac1d} = (k+1)\bigg(\frac{|\Theta|}{|\Theta_{\bmx}|}\bigg)^{\frac1d}
\end{equation}

\item We often switch between using $\bmx=(x_1,x_2,x_3)$, $\bmv=(v_1,v_2,v_3)$ and $\bmx=(x,y,z)$, $\bmv=(v_x,v_y,v_z)$.

\item Derivatives used for plotting were computed using a 5 point central difference formula rather than directly differentiating the discrete solution.

\item In fluid regimes, Euler reference solutions are computed using the Runge--Kutta DG code described and verified in \cite{endeve_etal_2019,pochik_etal_2021} with quadratic elements and third-order time integration.
\end{itemize}

\subsection{\texorpdfstring{$\bm{3x3v}$}{3x3v} relaxation test}  
\label{subsec:relaxation_test}
We first test the discrete BGK collision operator via a $3x3v$ relaxation problem
\begin{equation}\label{eqn:relaxation_problem}
    \partial_t f = \nu(M[\bmrho_f]-f),
\end{equation}
with $\Omega_x = [-1,1]^3$ and $\Omega_v=[-6,6]^3$.
We implement a Backward Euler time discretization of \eqref{eqn:relaxation_problem}; namely,
\begin{equation}\label{eqn:discrete_backward_euler}
    (f_h^{\mfn+1},g_h) + \nu\dt (f_h^{\mfn+1},g_h)  = (f_h^{\mfn},g_h) + \nu\Delta t(M_\Theta[\bm{\rho}_{f_h^{\mfn}}],g_h) \quad\forall g_h\in \bmV_\Theta.
\end{equation}
We use the following separable initial condition $f(\bmx,\bmv,0) = X(x)Y(y)Z(z)V_x(v_x)V_y(v_y)V_z(v_z)$ where
\begin{subequations}
\begin{alignat}{3}
    X(x) &= 1 + 0.2\sin(3x), &\qquad Y(y) &= 1 + 0.3\cos(5y), &\qquad Z(z) &= 1+ 0.25\sin(4.5z),
\end{alignat}
\begin{align}
    V_x(v_x) &= \tfrac12 \Big(
        \exp\Big(\tfrac{-(v_x-2.0)^2}{2\cdot0.6^2}\Big) + 
        \exp\Big(\tfrac{-(v_x+1.0)^2}{2\cdot1.4^2}\Big)
    \Big),  \\
    V_y(v_y) &= \tfrac12 \Big(
        \exp\Big(\tfrac{-(v_y-1.5)^2}{2\cdot0.7^2}\Big) + 
        \exp\Big(\tfrac{-(v_y+2.0)^2}{2\cdot1.2^2}\Big)
    \Big),  \\
    V_z(v_z) &= \tfrac12 \Big(
        \exp\Big(\tfrac{-(v_z-0.5)^2}{2\cdot0.5^2}\Big) + 
        \exp\Big(\tfrac{-(v_z+0.5)^2}{2\cdot1.0^2}\Big)
    \Big).
\end{align}
\end{subequations}

\begin{table}[htbp]
    \centering
    \caption{\Cref{subsec:relaxation_test} -- $3x3v$ Relaxation Problem.  Relative $L^2$ error of the moments $\bmrho_{f_h^{1}}$ after collisions compared against the moments of adapted initial condition $\bmrho_{f_h^{0}}$ before collisions for adaptive interpolation $I_\Theta M[\bmrho]$ (\Cref{defn:interpolation}) and hybrid interpolation $M_\Theta[\bmrho]$ (\Cref{defn:hybrid_interpolation}).  We omit the $\rho_2$ and $\rho_3$ errors are they are similar to $\rho_1$.
    The DoFs of the level 6 full grid $\bmV_N$ for the given $k$ are labeled ``FG DoFs''.}

    \renewcommand{\arraystretch}{1.1}
    \resizebox{\textwidth}{!}{%
    \begin{tabular}{c|c|c|c|c|c|c|c}
        \multicolumn{2}{c|}{$k$} & \multicolumn{2}{c|}{$1$} & \multicolumn{2}{c|}{$2$} & \multicolumn{2}{c}{$3$} \\ \hline
        \multicolumn{2}{c|}{$\tau_\text{rel}$} & $10^{-3}$ & $10^{-5}$ & $10^{-3}$ & $10^{-5}$ & $10^{-3}$ & $10^{-5}$ \\ 
        \multicolumn{2}{c|}{$|\Theta|$} & \num{17004} & 
                 \num{481226} &
                 \num{4414} &
                 \num{66764} &
                 \num{1715} & 
                 \num{17051} \\
        \multicolumn{2}{c|}{DoFs} & \num[exponent-mode = scientific,round-mode= places,round-precision= 2]{1088256} & 
                 \num[exponent-mode = scientific,round-mode= places,round-precision= 2]{30798464} &
                 \num[exponent-mode = scientific,round-mode= places,round-precision= 2]{3217806} &
                 \num[exponent-mode = scientific,round-mode= places,round-precision= 2]{48670956} &
                 \num[exponent-mode = scientific,round-mode= places,round-precision= 2]{7024640} & 
                 \num[exponent-mode = scientific,round-mode= places,round-precision= 2]{69840896} \\
        \multicolumn{2}{c|}{FG DoFs} & \num[exponent-mode = scientific,round-mode= places,round-precision= 2]{4.3980e12} & 
                 \num[exponent-mode = scientific,round-mode= places,round-precision= 2]{4.3980e12} &
                 \num[exponent-mode = scientific,round-mode= places,round-precision= 2]{5.0096e13} &
                 \num[exponent-mode = scientific,round-mode= places,round-precision= 2]{5.0096e13} &
                 \num[exponent-mode = scientific,round-mode= places,round-precision= 2]{2.8147e14} & 
                 \num[exponent-mode = scientific,round-mode= places,round-precision= 2]{2.8147e14} \\\hline\hline
        \multirow{4}{*}{\rotatebox[origin=c]{90}{~~~~$M_\Theta[\bmrho]$}}
        & $\rho_0$ & \num{6.02e-15} & \num{7.06e-15} & \num{4.25e-15} & \num{5.79e-15} & \num{4.03e-15} & \num{3.91e-15} \\
        & $\rho_1$ & \num{6.84e-15} & \num{8.42e-15} & \num{8.35e-16} & \num{2.52e-15} & \num{6.02e-16} & \num{1.16e-15} \\
        & $\rho_4$ & \num{7.69e-03} & \num{4.64e-03} & \num{5.43e-15} & \num{1.03e-14} & \num{2.52e-15} & \num{3.08e-15} \\ \hline

        \multirow{4}{*}{\rotatebox[origin=c]{90}{~~~~$I_\Theta M[\bmrho]$}}
        & $\rho_0$ & \num{1.06e-02} & \num{1.65e-03} & \num{3.01e-02} & \num{1.55e-03} & \num{1.56e-03} & \num{1.38e-03} \\
        & $\rho_1$ & \num{6.46e-02} & \num{7.09e-03} & \num{5.60e-02} & \num{5.82e-03} & \num{1.01e-02} & \num{5.64e-03} \\
        & $\rho_4$ & \num{3.57e-02} & \num{8.55e-03} & \num{8.81e-02} & \num{7.72e-03} & \num{8.04e-03} & \num{7.44e-03} \\ \hline
    \end{tabular}
    }
    \label{tab:conservation_errors}
\end{table}

Even though there is no advection, we choose a spatially varying initial condition to thoroughly test \Cref{prop:discrete_maxwellian_conservation} with a non-trivial adaptive sparse grid space. 
We set a maximum level of 6 in all dimensions, $\nu=10^7$, $\dt=1.0$, and $\tau_\text{abs}=0$.
We consider linear $k=1$, quadratic $k=2$, and cubic $k=3$ polynomials.  We adapt the initial condition for a specified $\tau_\text{rel}$, run \eqref{eqn:discrete_backward_euler} for one timestep, and do not adapt the grid after the timestep.
Therefore, according to \Cref{prop:discrete_maxwellian_conservation}, $\bmrho_{f_h^{0}}=\bmrho_{f_h^{1}}$ for $k\geq 2$.
To verify this claim, we list the relative error of $\bmrho_{f_h^{1}}$ against $\bmrho_{f_h^{0}}$ in \Cref{tab:conservation_errors} for $\tau_\text{rel}=10^{-3}$ and $\tau_\text{rel}=10^{-5}$.
The table shows that density $\bmrho_0$ and momentum (only $\bmrho_1$ shown) are well conserved for all polynomial degrees, while energy $\bmrho_4$ is only conserved for $k\geq 2$, as expected.
We additionally run the same test by using the interpolated Maxwellian $I_\Theta M[\bmrho_f]$ in phase-space (see \Cref{defn:interpolation}).  The errors are reported in \Cref{tab:conservation_errors} and show that phase-space adaptive interpolation does not preserve the moments to a satisfactory tolerance on a grid that sufficiently resolves $f_h^0$.

\subsection{\texorpdfstring{$\bm{2x2v}$}{2x2v} Sod shock tube}
\label{subsec:sod}

Here we consider a radial variant of the classical Sod Shock tube problem \cite{sod_1978}, see \cite[Section 17.1]{toro2009RiemannSolvers}, that features an initial contact discontinuity in position space.

Let $\Omega_{\bmx}=[-1,1]^2$ and $\Omega_v=[-6,6]^2$.  
Define the radial function $\zeta(\alpha,\beta;r)$ that provides a $C^1$ transition from $\alpha$ to $\beta$ by
\begin{equation}\label{eqn:radial_ic_func}
    \zeta(\alpha,\beta;r) = 
    \begin{cases}
        \alpha &\text{ if } r < 0.39,\\
        \alpha + \frac{-6(\beta-\alpha)}{(\Delta r)^3}\big(\frac{1}{3}(r-0.39)^3 - \frac{\Delta r}{2}(r-0.39)^2\big) &\text{ if }0.39\leq r \leq 0.41,\\
        \beta &\text{ if } r > 0.41,
    \end{cases}
\end{equation}
where $\Delta r = 0.41-0.39=0.02$.
The initial condition is a Maxwellian generated by the fluid variables 
\begin{equation}
\label{eqn:fluid_vars_sod}
    n(\bmx) = \zeta(1.0,0.125;|\bmx|),\qquad\bmu(\bmx) = {\bf 0},\qquad \theta(\bmx)=\zeta(1.0,0.8;|\bmx|).
\end{equation}
We use a smoothed contact discontinuity so that no additional preparation of the initial condition is needed.
Addtionally, we specify the inflow boundary condition as a Maxwellian with $n=0.125$, $\bmu=0$, and $\theta=0.8$.

In the results below we plot the kinetic velocity distribution at points in position space.  
An asymptotic Chapman-Enskog expansion of $f$ in \eqref{eqn:bgk} yields the following \cite{bardos1991FluidDynamic}: $f = M[\bmrho_f] + g[\bmrho_f] + \mathcal{O}(\nu^{-2})$ where
\begin{align}\label{eqn:chapman_enskog}
    g[\bmrho](\bmv) =  - \frac{1}{\nu \theta}\bigg(\bmc^\top\grad\bmu\bmc-\frac1d|\bmc|^2\grad\cdot\bmu + \Big(\frac{|\bmc|^2}{2\theta}-\frac{d+2}{2}\Big)\bmc\cdot\grad\theta \bigg)M[\bmrho](\bmv)
\end{align}
and $\bmc=\bmv-\bmu$ is the relative velocity.
We will refer to $g[\bmrho_f]$ as the compressible Navier--Stokes perturbation since the resulting equations can be derived by setting $f=M[\bmrho_f]+g[\bmrho_f]$ in \eqref{eqn:bgk} and testing by $\bfe$.

\paragraph{Discretization Specifics} We set $k=2$ and the maximum level to $8$ for all dimensions.  The full-grid DoF is then $(2^8\cdot3)^4\approx \num{3.48e11}$. 
We set $\dt=10^{-4}$ which is 76.8\% of the maximum explicit timestep in \eqref{eqn:maximum_explicit_timestep} and run for 1500 timesteps.
We set $\tau_\text{abs} = 10^{-5}$ and $\tau_\text{rel} = 0$.

\paragraph{Results}

Taking the Knudsen number $\Kn=\nu^{-1}$, we report the radial fluid variables, position grid adaptivity, and DoFs in time for the following regimes \cite{antman2005MicroflowsNanoflows}:
\begin{enumerate}
    \item The fluid regime $\nu=1000$ $(\Kn=0.001)$ where continuum models such as Navier-Stokes well approximate the BGK model.
    \item The boundary of the fluid/transition regime $\nu=10$ $(\Kn=0.1)$ where the Navier--Stokes equations start to break down.
    \item The transitional regime $\nu=\Kn=1$, where both free-streaming and continuum models are unreliable \cite{cercignani1988BoltzmannEquation,sharipov2016RarefiedGas}.
\end{enumerate}
\begin{figure}[htbp]
    \centering
    \begin{subfigure}{0.245\textwidth}
    \centering
        \includegraphics[width=\textwidth]{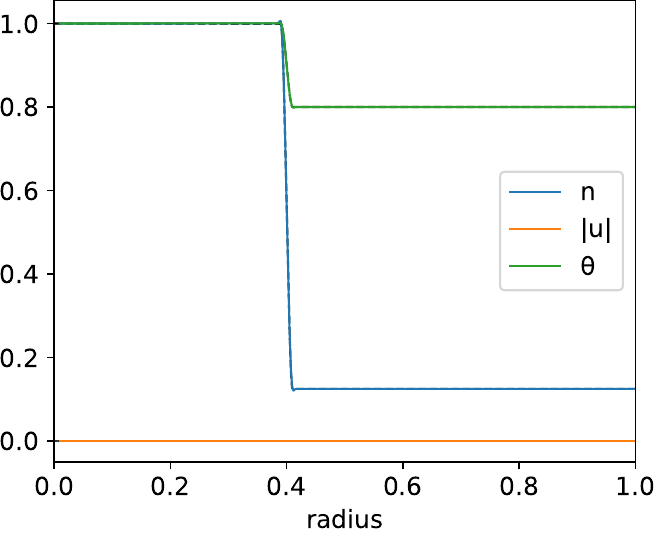}
        \caption{$t=0.0$}
    \end{subfigure}%
    \begin{subfigure}{0.245\textwidth}
    \centering
        \includegraphics[width=\textwidth]{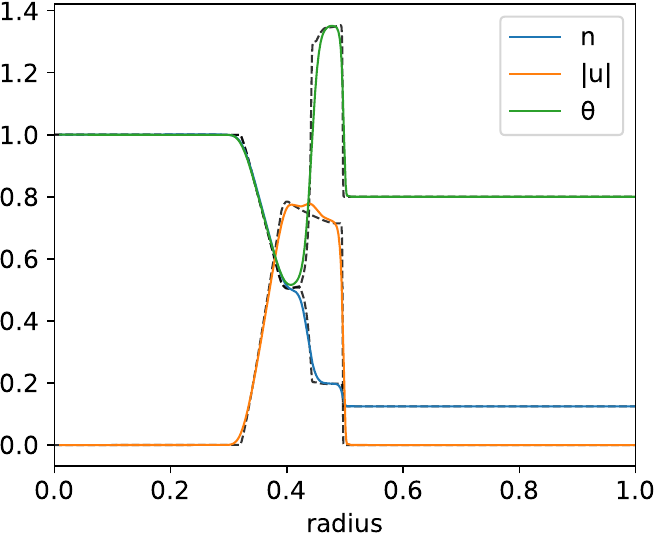}
        \caption{$t=0.05$}
    \end{subfigure}%
    \begin{subfigure}{0.245\textwidth}
    \centering
        \includegraphics[width=\textwidth]{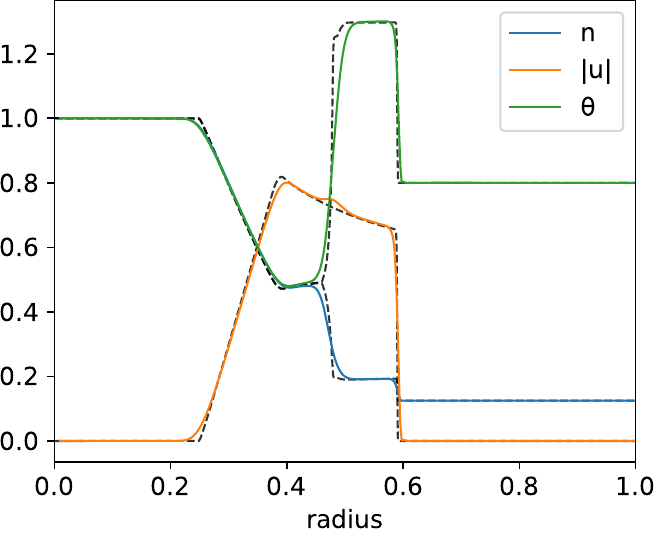}
        \caption{$t=0.10$}
    \end{subfigure}%
    \begin{subfigure}{0.245\textwidth}
    \centering
        \includegraphics[width=\textwidth]{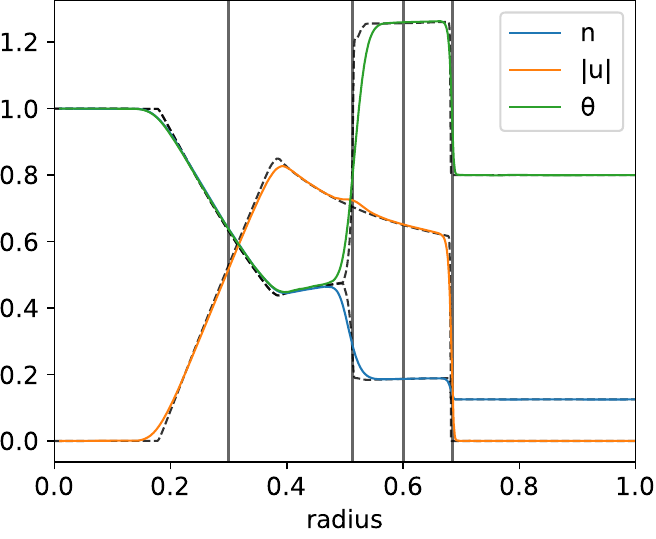}
        \caption{$t=0.15$}
        \label{fig:sod_shock:nu_1000:fluid_vars:comp:t_0p15}
    \end{subfigure}%
    \caption{\Cref{subsec:sod:nu_1000} -- $2x2v$ Sod shock tube, $\nu=1000$.  Line plots of fluid variables along the ray $(\cos(\pi/5),\sin(\pi/5))$.  The black dashed lines are from a high-resolution DG solver for the compressible Euler equations.  The vertical lines in \Cref{fig:sod_shock:nu_1000:fluid_vars:comp:t_0p15} denote the radii where the velocity plots in \Cref{fig:sod_shock:nu_1000:velocity} are taken.}
    \label{fig:sod_shock:nu_1000:fluid_vars:comp}
\end{figure}

For lower $\nu$, the initial contact discontinuity in position space advects into phase-space, inducing non-equilibrium behavior and requiring sufficient velocity resolution to capture large gradients.
Rather than comparing the fluid variables of the BGK model and continuum equations like Navier--Stokes \cite{bird2023MolecularGas,xiong2016HighOrder}, we instead compare velocity distributions created by the adaptive sparse-grid BGK method and the analytic Chapman--Enskog expansion $f=M[\bmrho_f]+g[\bmrho_f]$ where $\bmrho_f$ is derived from the simulation.

\subsubsection{Fluid regime: \texorpdfstring{$\nu=1000$}{ν=1000}}
\label{subsec:sod:nu_1000}

We first take $\nu=1000$ which puts us close to the compressible Euler limit where the evolution of the fluid variables are well governed by the compressible Navier--Stokes equations.
The nonlinear refinement function is $0.1M_\Theta[\bmrho_f]$ (see \Cref{subsec:refine_and_coarsen}).

\Cref{fig:sod_shock:nu_1000:fluid_vars:comp} plots the fluid variables along a radial cut against a high-resolution compressible Euler solver with the same initial condition and shows that the kinetic fluid variables are smoother, due to the BGK collision operator, but agree well with the Euler reference.

\begin{figure}[htbp]
    \centering
    \begin{subfigure}{0.245\textwidth}
        \includegraphics[width=\textwidth]{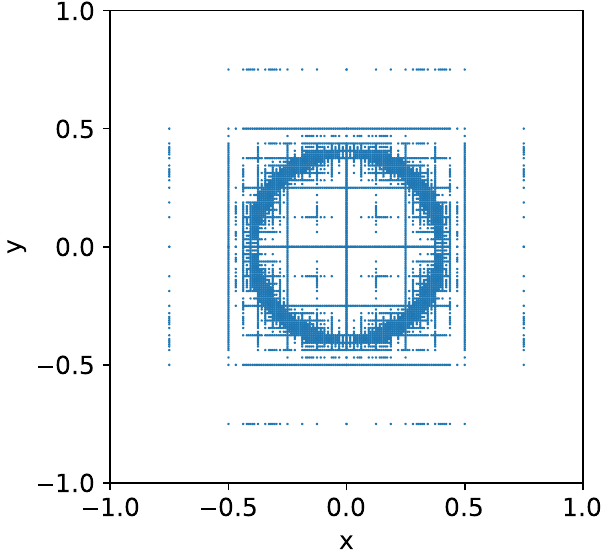}
        \caption{$t=0.00$}
        \label{fig:sod_shock:nu_1000:fluid_vars:grid_0.00}
    \end{subfigure}%
    \begin{subfigure}{0.245\textwidth}
        \includegraphics[width=\textwidth]{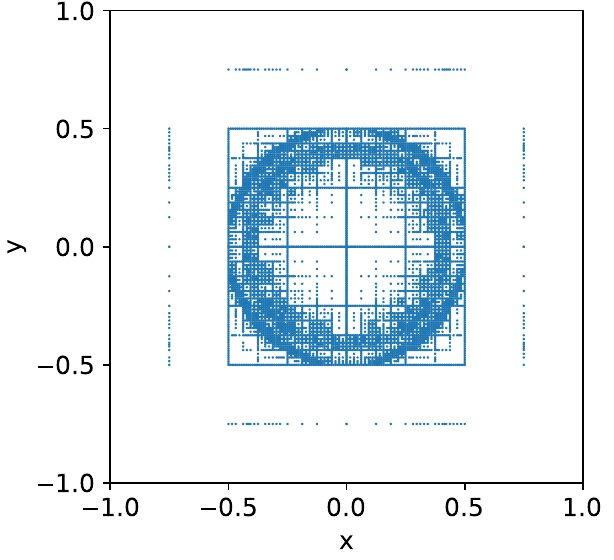}
        \caption{$t=0.05$}
        \label{fig:sod_shock:nu_1000:fluid_vars:grid_0.05}
    \end{subfigure}%
    \begin{subfigure}{0.245\textwidth}
        \includegraphics[width=\textwidth]{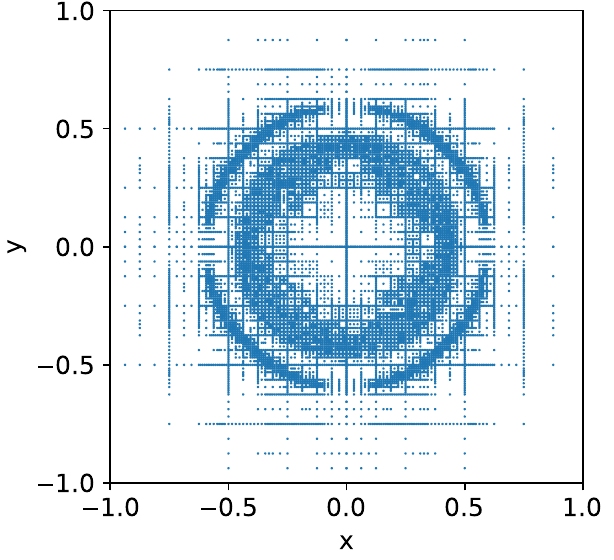}
        \caption{$t=0.10$}
        \label{fig:sod_shock:nu_1000:fluid_vars:grid_0.10}
    \end{subfigure}%
    \begin{subfigure}{0.245\textwidth}
        \includegraphics[width=\textwidth]{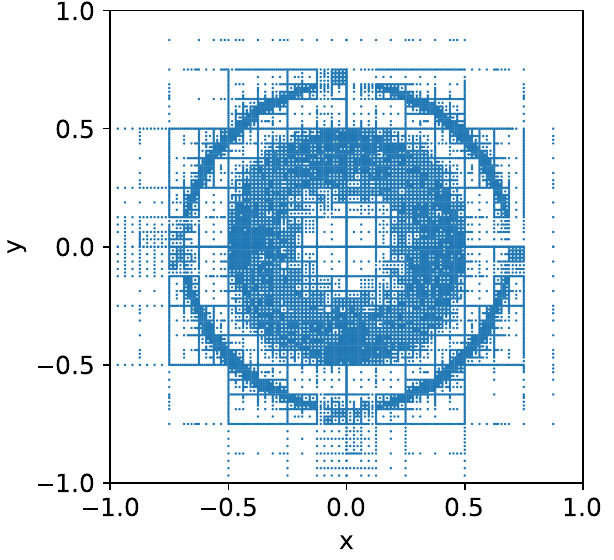}
        \caption{$t=0.15$}
        \label{fig:sod_shock:nu_1000:fluid_vars:grid_0.15}
    \end{subfigure}%
    \caption{\Cref{subsec:sod:nu_1000} -- $2x2v$ Sod shock tube, $\nu=1000$. Position grid $\Theta_{\bmx}$ (see \eqref{eqn:position_grid}) denoted by the barycenters of the support of each active element for multiple times. }
    \label{fig:sod_shock:nu_1000:fluid_vars}
\end{figure}

\Cref{fig:sod_shock:nu_1000:fluid_vars} plots the adaptive position grid $\Theta_{\bmx}$ at $t=0.0, 0.05, 0.10, 0.15$.
The shock front is well captured by the position grid refinement.
The region between the shock front and contact discontinuity is sparse while the grid is dense starting at the contact discontinuity and progressing throughout the rarefaction wave.
For $t=0.15$, the position grid is not radially symmetric due to the lack of symmetry in the interpolation points; however, we observe good radial symmetry of $n$, $|\bmu|$, and $\theta$.

\begin{figure}[htbp]
    \centering
    \begin{subfigure}{0.245\textwidth}
        \includegraphics[width=\textwidth]{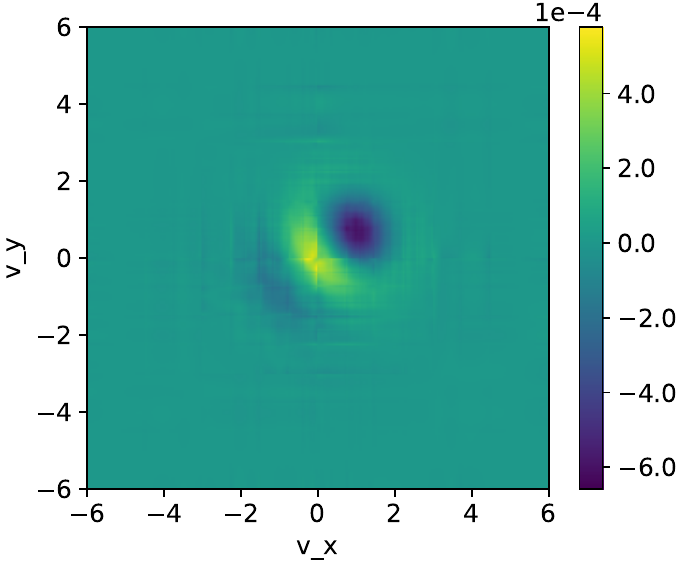}
        \captionsetup{justification=centering}
        \caption{K.P. $r=0.3$}
    \label{fig:sod_shock:nu_1000:velocity:Kinetic_0.3}
    \end{subfigure}%
    \begin{subfigure}{0.245\textwidth}
        \includegraphics[width=\textwidth]{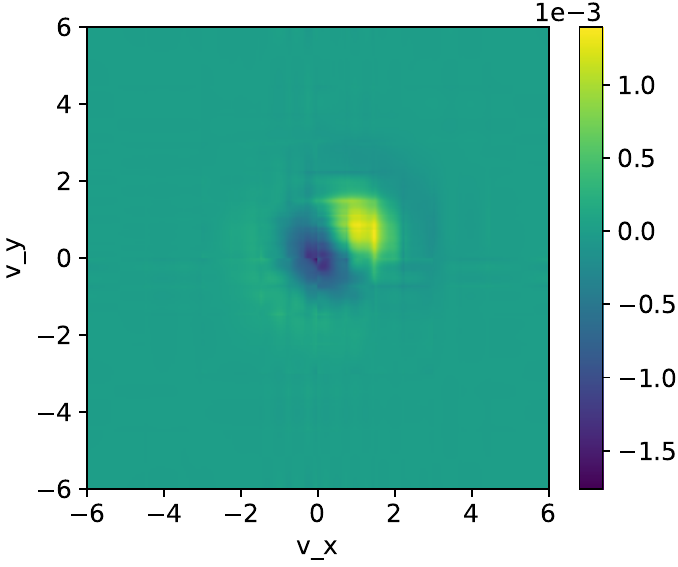}
        \captionsetup{justification=centering}
        \caption{K.P. $r=0.5125$}
    \label{fig:sod_shock:nu_1000:velocity:Kinetic_0.5125}
    \end{subfigure}%
    \begin{subfigure}{0.245\textwidth}
        \includegraphics[width=\textwidth]{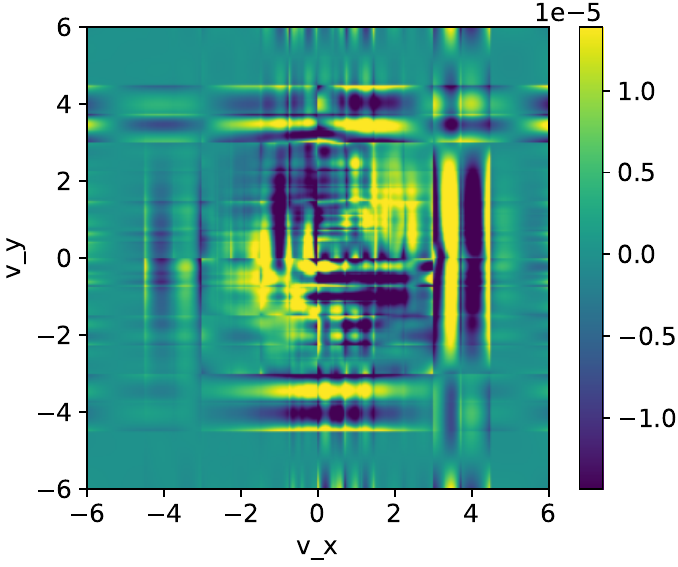}
        \captionsetup{justification=centering}
        \caption{K.P. $r=0.6$}
    \label{fig:sod_shock:nu_1000:velocity:Kinetic_0.6}
    \end{subfigure}%
    \begin{subfigure}{0.245\textwidth}
        \includegraphics[width=\textwidth]{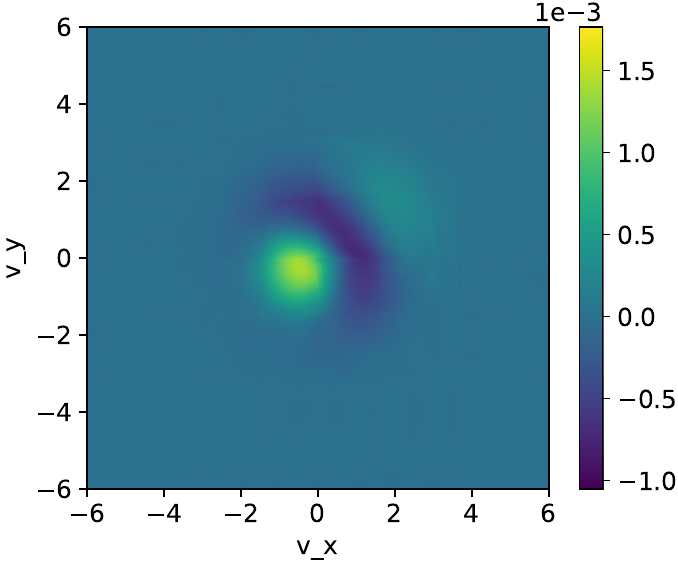}
        \captionsetup{justification=centering}
        \caption{K.P. $r=0.685$}
    \label{fig:sod_shock:nu_1000:velocity:Kinetic_0.685}
    \end{subfigure}%

    \vspace{2ex}

    \begin{subfigure}{0.245\textwidth}
        \includegraphics[width=\textwidth]{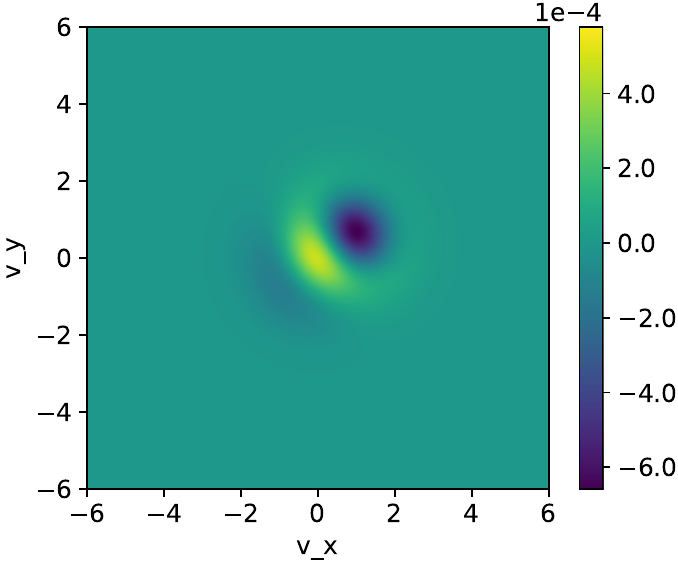}
        \caption{$g$. $r=0.3$}
    \label{fig:sod_shock:nu_1000:velocity:CNS_0.3}
    \end{subfigure}%
    \begin{subfigure}{0.245\textwidth}
        \includegraphics[width=\textwidth]{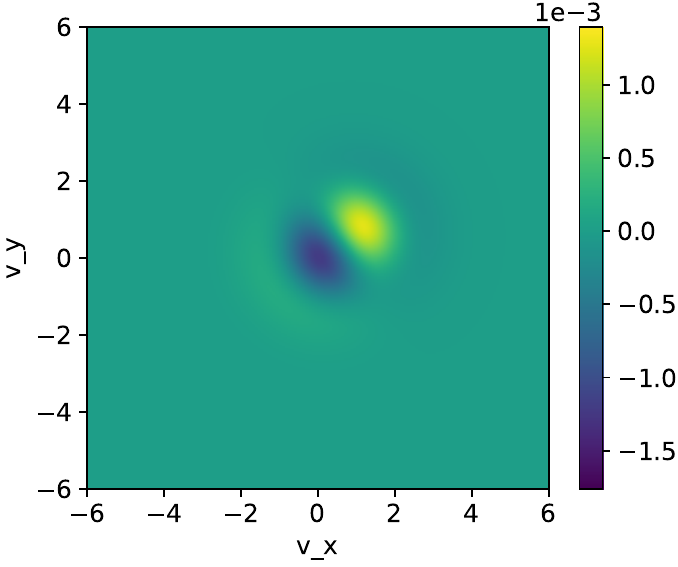}
        \caption{$g$. $r=0.5125$}
    \label{fig:sod_shock:nu_1000:velocity:CNS_0.5125}
    \end{subfigure}%
    \begin{subfigure}{0.245\textwidth}
        \includegraphics[width=\textwidth]{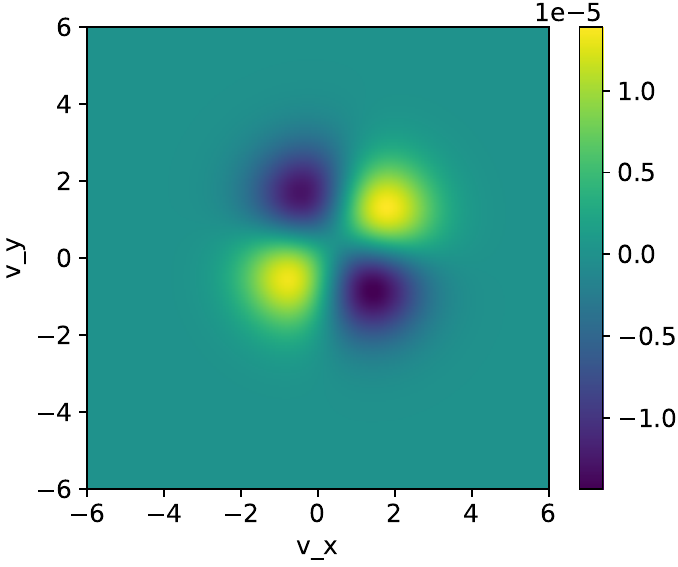}
        \caption{$g$. $r=0.6$}
    \label{fig:sod_shock:nu_1000:velocity:CNS_0.6}
    \end{subfigure}%
    \begin{subfigure}{0.245\textwidth}
        \includegraphics[width=\textwidth]{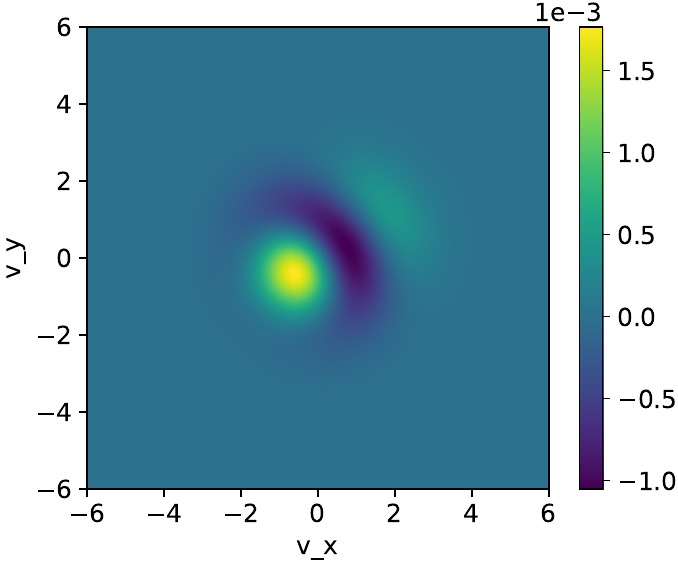}
        \caption{$g$. $r=0.685$}
    \label{fig:sod_shock:nu_1000:velocity:CNS_0.685}
    \end{subfigure}%

    \caption{\Cref{subsec:sod:nu_1000} -- $2x2v$ Sod shock tube, $\nu=1000$. Deviations from Maxwellian at $\bmx=(r\cos(\tfrac{\pi}{5}),r\sin(\tfrac{\pi}{5}))$ for various radii $r$ at $t=0.15$. 
    Here K.P.~is the kinetic perturbation $f(\bmx,\cdot)-M[\bmrho_f(\bmx)](\cdot)$. The perturbation $g[\bmrho_f]$ is given in \eqref{eqn:chapman_enskog}.}
    \label{fig:sod_shock:nu_1000:velocity}
\end{figure}

\Cref{fig:sod_shock:nu_1000:velocity} plots the deviation from the Maxwellian along a radial cut of the position space at an angle $\pi/5$ from the positive $x$-axis.
We set $t=0.15$ and consider the radius $r=0.3$ along the rarefaction wave, $r=0.5125$ at the contact discontinuity, $r=0.6$ in the flat region between the contact and shock wave where the grid is coarse, and $r=0.685$ at the shock wave; see \Cref{fig:sod_shock:nu_1000:fluid_vars:comp:t_0p15}.
\Cref{fig:sod_shock:nu_1000:velocity:Kinetic_0.3,fig:sod_shock:nu_1000:velocity:Kinetic_0.5125,fig:sod_shock:nu_1000:velocity:Kinetic_0.6,fig:sod_shock:nu_1000:velocity:Kinetic_0.685} plot the perturbation $f-M[\bmrho_f]$ where $M$ is calculated analytically using the moments $\bmrho_f$ given from the discrete distribution.
The plots show that $f-M[\bmrho_f]$ is at most $\mathcal{O}(\nu^{-1})$ which is constistent with the Chapman--Enskog expansion.
\Cref{fig:sod_shock:nu_1000:velocity:CNS_0.3,fig:sod_shock:nu_1000:velocity:CNS_0.5125,fig:sod_shock:nu_1000:velocity:CNS_0.6,fig:sod_shock:nu_1000:velocity:CNS_0.685} plot the Navier-Stokes perturbation $g$ in \eqref{eqn:chapman_enskog} using $\bmrho_f$.
For all radii but $r=0.6$, there is good agreement between $g$ and the discrete kinetic perturbation; additionally, the discrete kinetic perturbation only shows mild numerical artifacts.  For $r=0.6$, the size of $g$ matches the adaptive tolerance $\tau_\text{abs}=10^{-5}$, and thus the discrete kinetic perturbation is dominated by noise.

\begin{figure}[htbp]
    \centering
    \begin{subfigure}{0.35\textwidth}
        \centering
        \includegraphics[width=\textwidth]{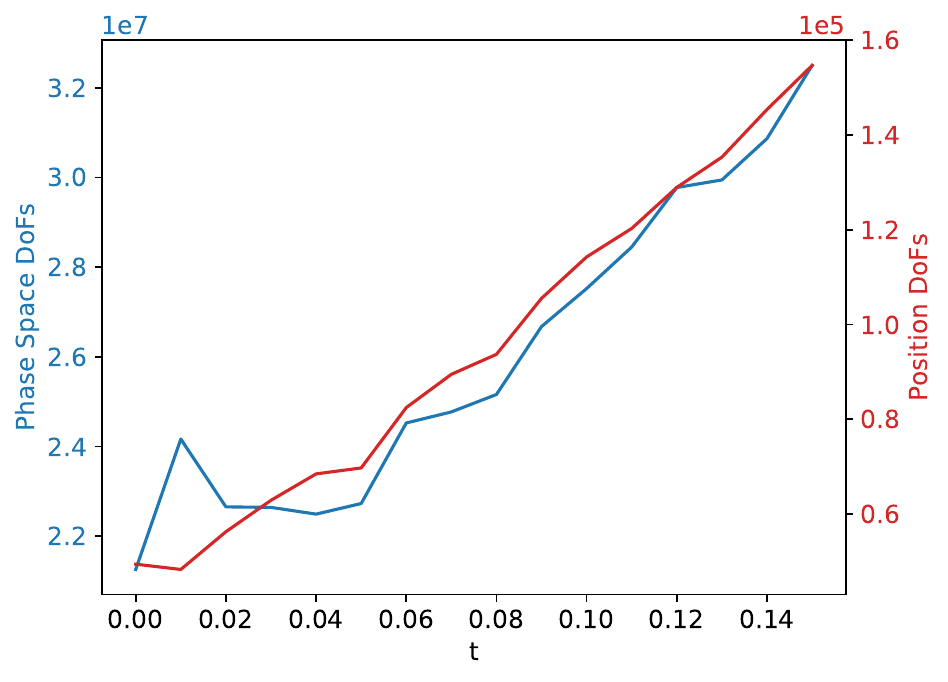}
        \caption{Degrees of freedom over time for the 4D phase space grid and the 2D moment grid.\\}
        \label{fig:sod_shock:nu_1000:dof}
    \end{subfigure}
    \hspace{4em}
    \begin{subfigure}{0.35\textwidth}
        \centering
        \includegraphics[width=\textwidth]{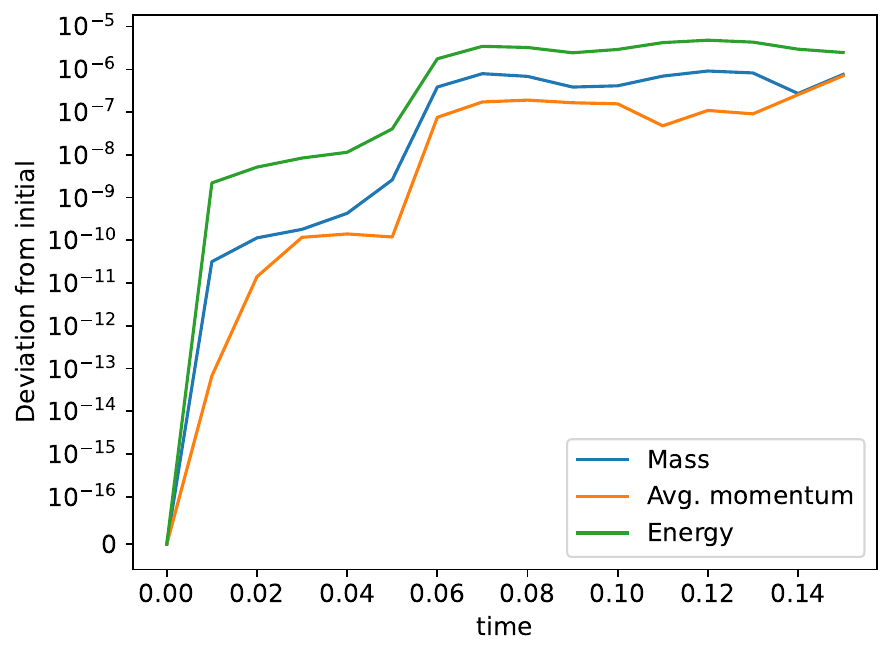}
        \caption{Absolute deviation of the total mass, average momentum, and energy with respect to the initial values.}
        \label{fig:sod_shock:nu_1000:cons}
    \end{subfigure}
    \caption{\Cref{subsec:sod:nu_1000} -- $2x2v$ Sod shock tube, $\nu=1000$. Degrees of freedom and conservation quantities as a function of time.
    A level 8,8,6,6 full grid (corresponding to the finest level obtained in the simulation) has approximately \num{2.17e10} DoFs.}
    \label{fig:sod_shock:nu_1000:dof_and_cons}
\end{figure}

\Cref{fig:sod_shock:nu_1000:dof} plots the DoFs of the phase-space distribution and the moments $\bmrho_f$.  
Since $f\approx M[\bmrho_f]$, we expect that the position and phase-space DOFs to be correlated.
For short times, the initial layer causes kinetic effects which requires more velocity resolution per position degree of freedom, but for longer times the phase space and position DoFs are correlated.
At $t=0.15$ the phase space and position DoFs are approximately 32.5 million and 145 thousand respectively. This gives an average per velocity dimension DoF of $\dofvavg\approx 14.5$.
The simulation never refined beyond level 6 in velocity; thus the 32.5 million DoFs correspond to approximately 0.15\% of the DoFs in a level 8,8,6,6 full-grid simulation.
The simulation took 81 minutes 35 seconds.

\Cref{fig:sod_shock:nu_1000:cons} plots the deviation in the total mass, momentum, and energy from the initial condition.
Since the inflow boundary condition is the initial condition, there is no provable global conservation law, but, even though the resolution near the boundary is quite coarse, the method preserves the global moments up to the expected adaptive tolerance.
If $I_\Theta M[\bmrho_{f_h^{(\cdot,*)}}]$ is used in place of $M_\Theta[\bmrho_{f_h^{(\cdot,*)}}]$, then oscillations in the flat regions of the distribution in position space build up over time which ends up adapting to a dense position grid $\Theta_{\bmx}$ by $t=0.15$.

\subsubsection{Boundary of fluid/transitional regime \texorpdfstring{$\nu=10$}{ν=10}}
\label{subsec:sod:nu_10}

We next take $\nu=10$ which puts us on the edge of where Navier--Stokes is considered a viable approximation and more into the rarefied regime.  
The nonlinear refinement function is set as $0.1n_f^{-1} M_\Theta[\bmrho_f]$ (see \Cref{subsec:refine_and_coarsen}).
We include the $n_f^{-1}$ factor to remove numerical artifacts in the fluid variables in a portion of the low density region.

\begin{figure}[htbp]
    \centering
    \begin{subfigure}{0.245\textwidth}
        \includegraphics[width=\textwidth]{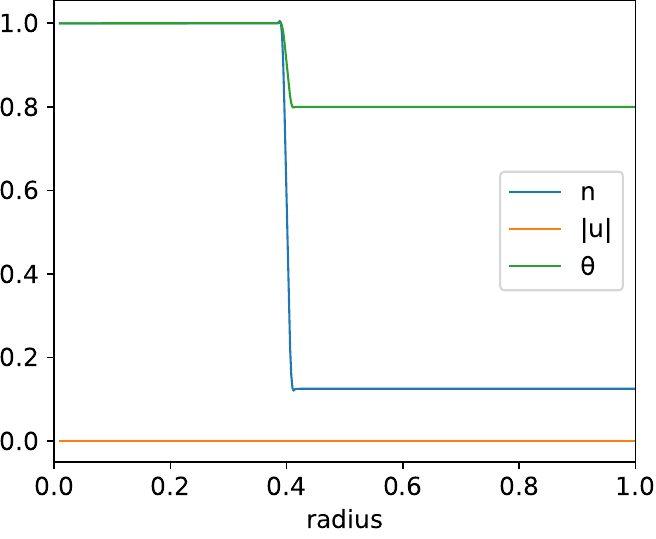}
        \caption{$t=0.00$}
        \label{fig:sod_shock:nu_10:fluid_vars_line:0.00}
    \end{subfigure}%
    \begin{subfigure}{0.245\textwidth}
        \includegraphics[width=\textwidth]{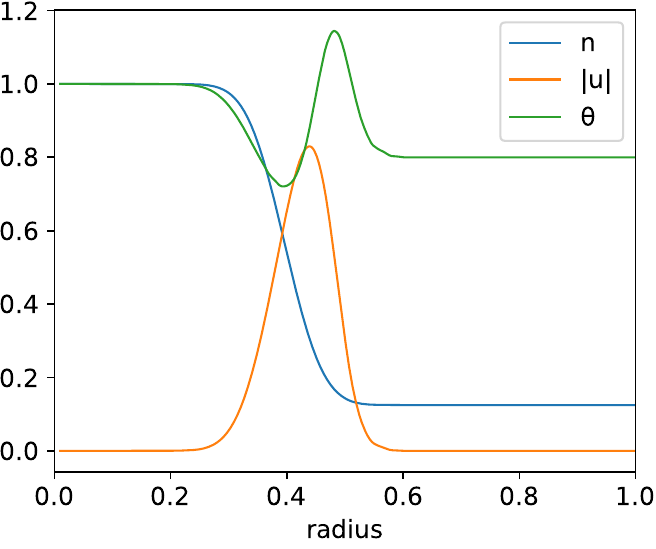}
        \caption{$t=0.05$}
        \label{fig:sod_shock:nu_10:fluid_vars_line:0.05}
    \end{subfigure}%
    \begin{subfigure}{0.245\textwidth}
        \includegraphics[width=\textwidth]{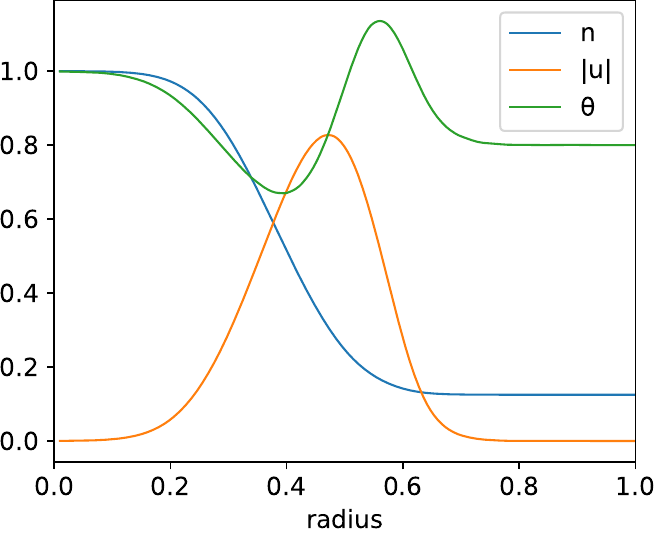}
        \caption{$t=0.10$}
        \label{fig:sod_shock:nu_10:fluid_vars_line:0.10}
    \end{subfigure}%
    \begin{subfigure}{0.245\textwidth}
        \includegraphics[width=\textwidth]{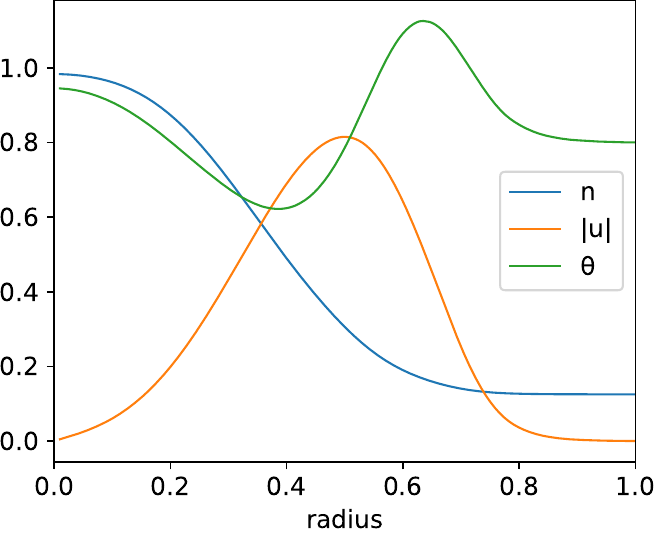}
        \caption{$t=0.15$}
        \label{fig:sod_shock:nu_10:fluid_vars_line:0.15}
    \end{subfigure}%
    \caption{\Cref{subsec:sod:nu_10} -- $2x2v$ Sod shock tube, $\nu=10$. Line plots of fluid variables along the ray $(\cos(\pi/5),\sin(\pi/5))$.}
    \label{fig:sod_shock:nu_10:fluid_vars_line}
\end{figure}

\Cref{fig:sod_shock:nu_10:fluid_vars_line} and \Cref{fig:sod_shock:nu_10:fluid_vars} plot the fluid variables and the adaptive position grid at $t=0.0, 0.05, 0.10, 0.15$.  At $t=0.05$, there is still a large gradient in the fluid variables, which accounts for the refined position grid around a radius of 0.5. 
However, for longer times the initial discontinuities are dissipated enough to let position grid become much coarser.

\begin{figure}[htbp]
    \centering
    \begin{subfigure}{0.245\textwidth}
        \includegraphics[width=\textwidth]{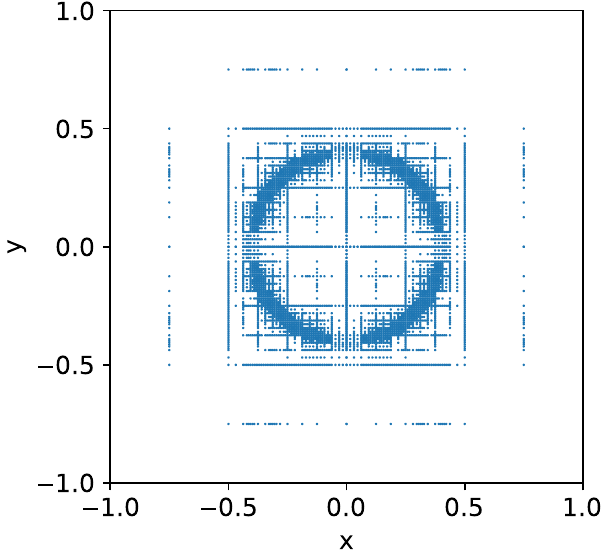}
        \caption{$t=0.00$}
        \label{fig:sod_shock:nu_10:fluid_vars:grid_0.00}
    \end{subfigure}%
    \begin{subfigure}{0.245\textwidth}
        \includegraphics[width=\textwidth]{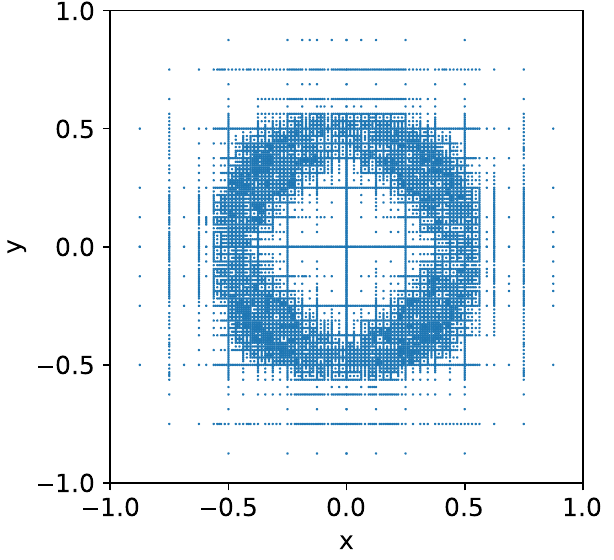}
        \caption{$t=0.05$}
        \label{fig:sod_shock:nu_10:fluid_vars:grid_0.05}
    \end{subfigure}%
    \begin{subfigure}{0.245\textwidth}
        \includegraphics[width=\textwidth]{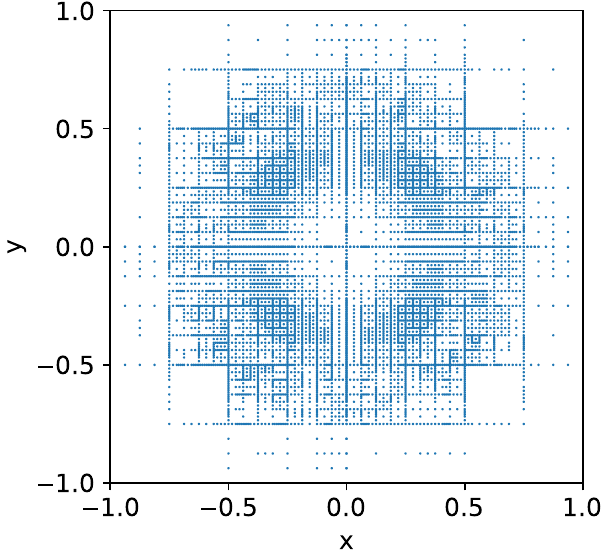}
        \caption{$t=0.10$}
        \label{fig:sod_shock:nu_10:fluid_vars:grid_0.10}
    \end{subfigure}%
    \begin{subfigure}{0.245\textwidth}
        \includegraphics[width=\textwidth]{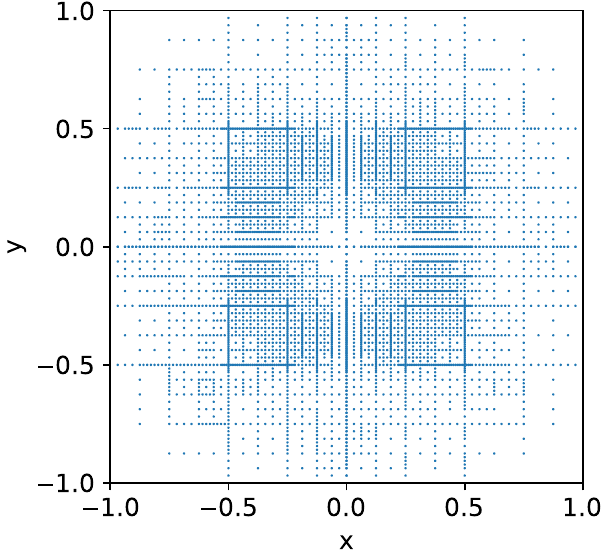}
        \caption{$t=0.15$}
        \label{fig:sod_shock:nu_10:fluid_vars:grid_0.15}
    \end{subfigure}%
    \caption{\Cref{subsec:sod:nu_10} -- $2x2v$ Sod shock tube, $\nu=10$. Position grid $\Theta_{\bmx}$ (see \eqref{eqn:position_grid}) denoted by the barycenters of the support of each active element for multiple times.}
    \label{fig:sod_shock:nu_10:fluid_vars}
\end{figure}

\Cref{fig:sod_shock:nu_10:velocity} plots the velocity distribution and perturbations along a radial cut of the position space at an angle $\pi/5$ from the positive $x$-axis; the perturbations are calculated as in \Cref{subsec:sod:nu_10}.
Unlike in the fluid regime, the velocity distribution features sharp discontinuities that the adaptive sparse-grid method captures well.
For $r=0.3$, the kinetic and CNS perturbations agree qualitatively, but the discontinuity is lost in the CNS perturbation.
For $r=0.5$, the kinetic and CNS perturbations look most dissimilar. 
For $r=0.7$, some numerical artifacts in the kinetic perturbation become apparent.

\begin{figure}[htbp]
    \centering
    \begin{subfigure}{0.25\textwidth}
        \includegraphics[width=\textwidth]{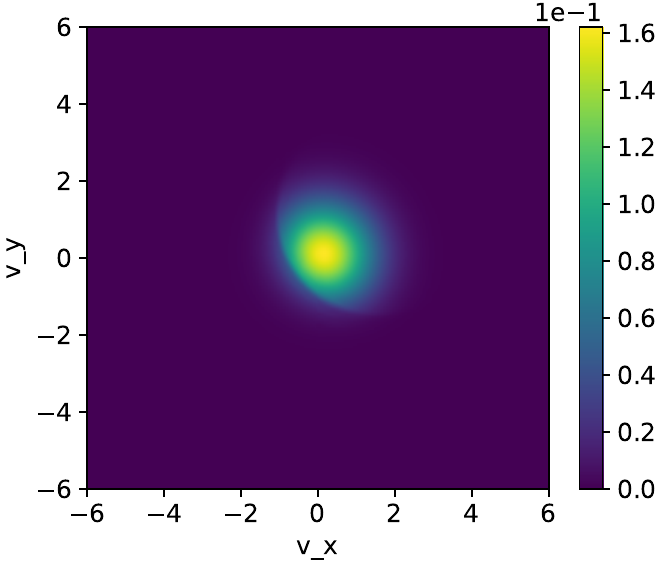}
        \caption{$f$, $r=0.3$}
    \label{fig:sod_shock:nu_10:velocity:dist_0.3}
    \end{subfigure}
    \hspace{0.25in}
    \begin{subfigure}{0.25\textwidth}
        \includegraphics[width=\textwidth]{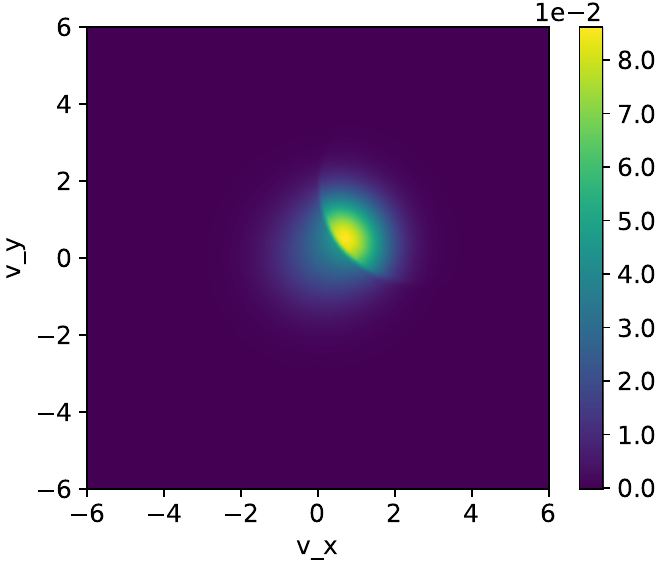}
        \caption{$f$, $r=0.5$}
    \label{fig:sod_shock:nu_10:velocity:dist_0.5}
    \end{subfigure}
    \hspace{0.25in}
    \begin{subfigure}{0.25\textwidth}
        \includegraphics[width=\textwidth]{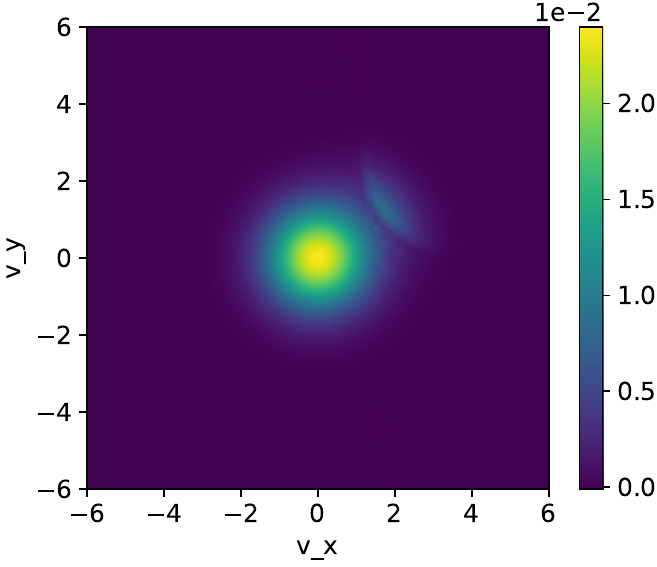}
        \caption{$f$, $r=0.7$}
    \label{fig:sod_shock:nu_10:velocity:dist_0.7}
    \end{subfigure}

    \vspace{2ex}
    
    \begin{subfigure}{0.25\textwidth}
        \includegraphics[width=\textwidth]{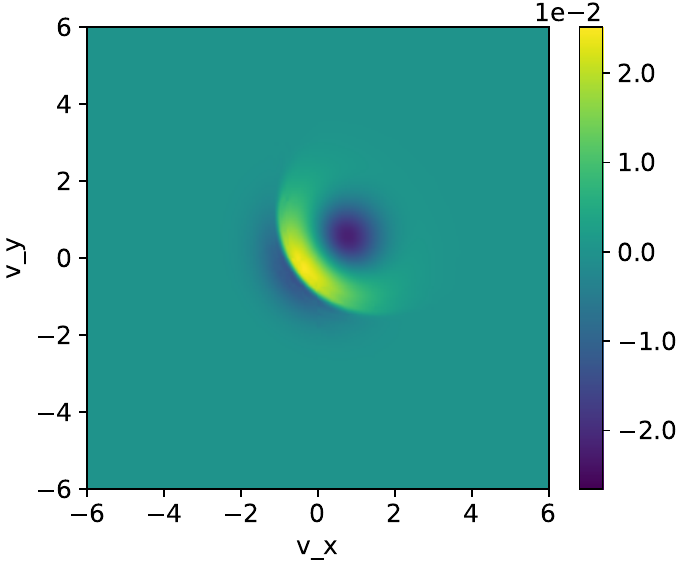}
        \captionsetup{justification=centering}
        \caption{K.P., $r=0.3$}
    \label{fig:sod_shock:nu_10:velocity:Kinetic_0.3}
    \end{subfigure}
    \hspace{0.25in}
    \begin{subfigure}{0.25\textwidth}
        \includegraphics[width=\textwidth]{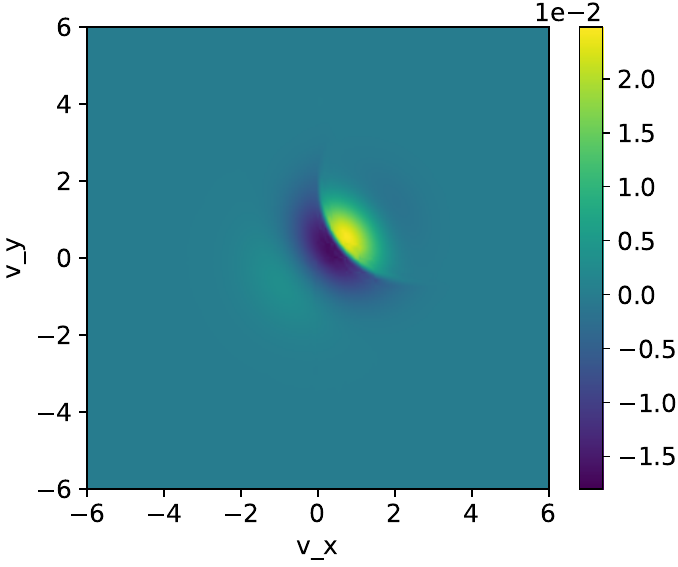}
        \captionsetup{justification=centering}
        \caption{K.P., $r=0.5$}
    \label{fig:sod_shock:nu_10:velocity:Kinetic_0.5}
    \end{subfigure}
    \hspace{0.25in}
    \begin{subfigure}{0.25\textwidth}
        \includegraphics[width=\textwidth]{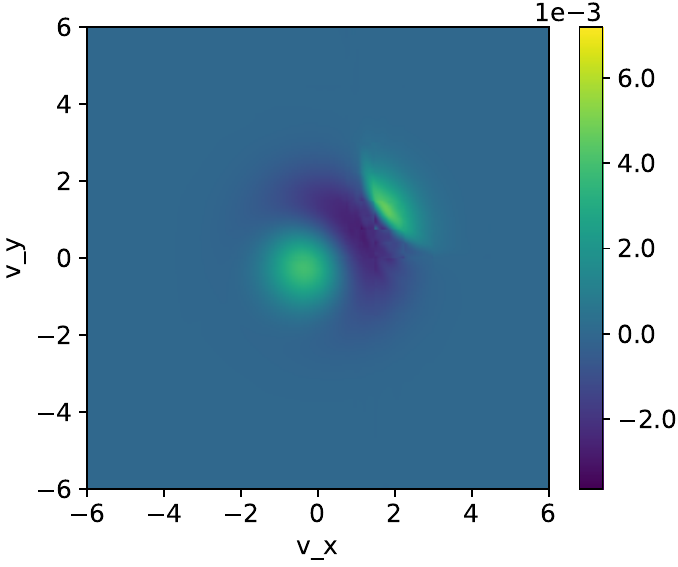}
        \captionsetup{justification=centering}
        \caption{K.P., $r=0.7$}
    \label{fig:sod_shock:nu_10:velocity:Kinetic_0.7}
    \end{subfigure}

    \vspace{2ex}

    \begin{subfigure}{0.25\textwidth}
        \includegraphics[width=\textwidth]{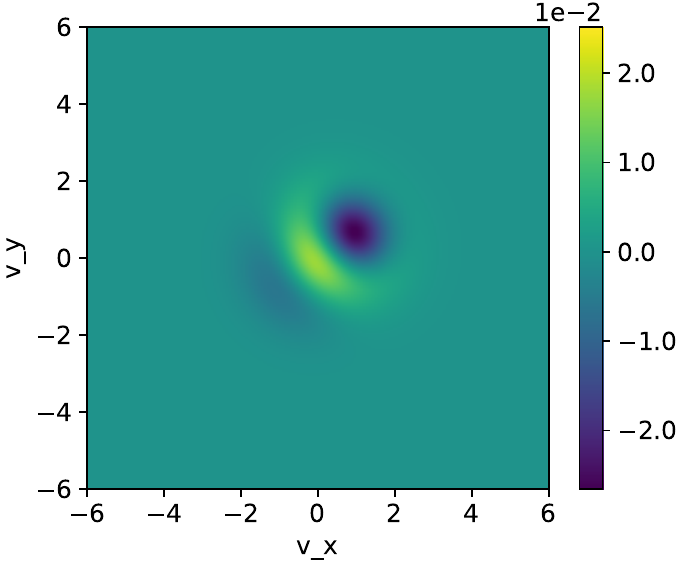}
        \caption{$g$, $r=0.3$}
    \label{fig:sod_shock:nu_10:velocity:CNS_0.3}
    \end{subfigure}
    \hspace{0.25in}
    \begin{subfigure}{0.25\textwidth}
        \includegraphics[width=\textwidth]{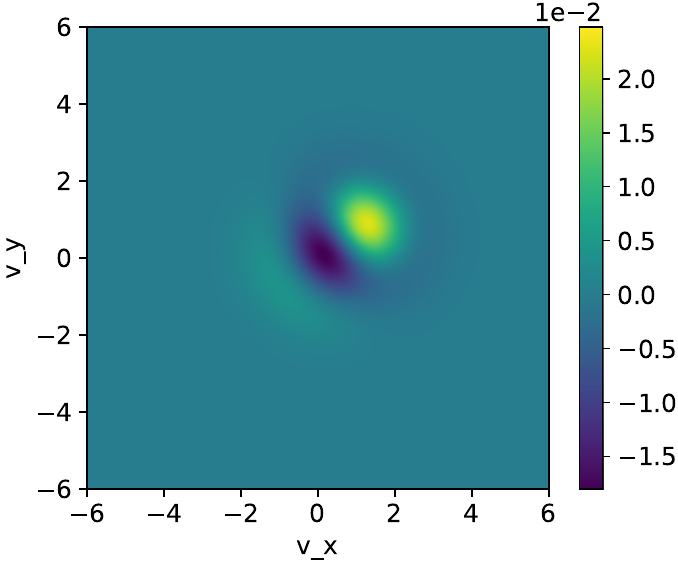}
        \caption{$g$, $r=0.5$}
    \label{fig:sod_shock:nu_10:velocity:CNS_0.5}
    \end{subfigure}
    \hspace{0.25in}
    \begin{subfigure}{0.25\textwidth}
        \includegraphics[width=\textwidth]{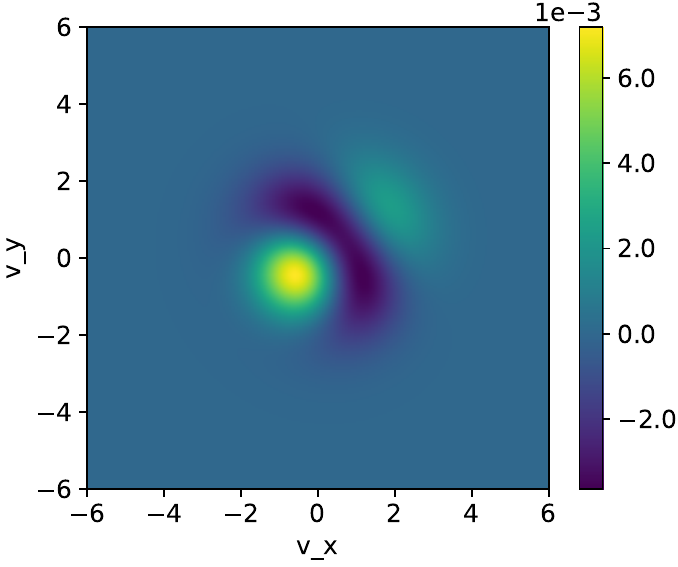}
        \caption{$g$, $r=0.7$}
    \label{fig:sod_shock:nu_10:velocity:CNS_0.7}
    \end{subfigure}

    \caption{\Cref{subsec:sod:nu_10} -- $2x2v$ Sod shock tube, $\nu=10$. Velocity Perturbations at $\bmx=(r\cos(\tfrac{\pi}{5}),r\sin(\tfrac{\pi}{5}))$ for various radii $r$ at $t=0.15$. 
    Here K.P.~refers to the kinetic perturbation $f(\bmx,\cdot)-M[\bmrho_f(\bmx)](\cdot)$. The compressible Navier-Stokes perturbation $g = g[\bmrho_f]$ is given in \eqref{eqn:chapman_enskog}.}
    \label{fig:sod_shock:nu_10:velocity}
\end{figure}

\Cref{fig:sod_shock:nu_10:dof} plots the DoFs of the phase space distribution and the moments $\bmrho_f$.
Unlike the fluid case (and verified by \Cref{fig:sod_shock:nu_10:velocity}) the velocity distribution is not at equilibrium, and thus we expect much more velocity DoFs will be required to resolve the distribution.
For short times, the opposite is the case -- the position DoFs outgrow the phase-space DoFs. 
We attribute this to the initial discontinuity largely living in the positional variables.
However, after $t=0.05$, the discontinuity has significantly shifted away from position space, and the physical DoFs start to decrease linearly while the phase-space DoFs keep increasing.  
At $t=0.15$ the phase space and position DoFs are approximately 56.2 million and 48 thousand respectively.
This gives an average per velocity dimension DoF of $\dofvavg\approx 34$, over two times the fluid case.
The simulation refines up to level 8 in velocity by $t=0.15$; hence the maximal 61.5 million DoFs at $t=0.15$ correspond to approximately 0.018\% of the DoFs in a level 8 full-grid simulation.
The simulation took 198 minutes and 47 seconds.

\begin{figure}[htbp]
    \centering
    \begin{subfigure}{0.35\textwidth}
        \centering
        \includegraphics[width=\textwidth]{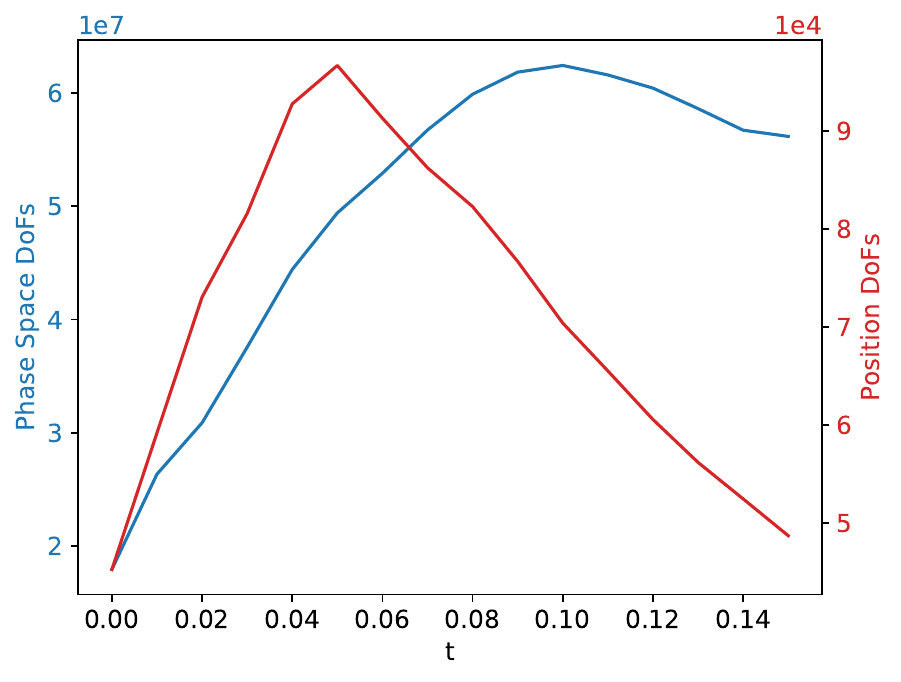}
        \caption{Degrees of freedom over time for the 4D phase space grid and the 2D moment grid.\\}
        \label{fig:sod_shock:nu_10:dof}
    \end{subfigure}
    \hspace{4em}
    \begin{subfigure}{0.35\textwidth}
        \centering
        \includegraphics[width=\textwidth]{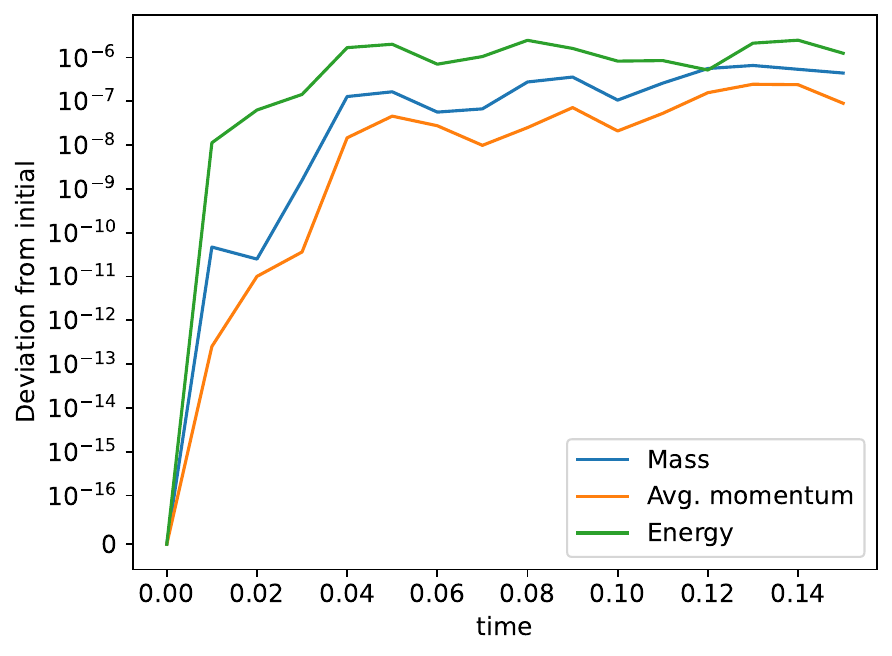}
        \caption{Absolute deviation of the total mass, average momentum, and energy with respect to the initial values.}
        \label{fig:sod_shock:nu_10:cons}
    \end{subfigure}
    \caption{\Cref{subsec:sod:nu_10} -- $2x2v$ Sod shock tube, $\nu=10$. Degrees of freedom and conservation quantities. 
    A level 8,8,8,8 full grid has approximately 348 billion (\num{3.48e11}) DoFs.}
    \label{fig:sod_shock:nu_10:dof_and_cons}
\end{figure}

\Cref{fig:sod_shock:nu_10:cons} plots the deviation in the total mass, momentum, and energy from the initial condition and shows results similar to the fluid case.

\subsubsection{Transitional regime \texorpdfstring{$\nu=1$}{ν=1}}
\label{subsec:sod:nu_1}

We finally take $\nu=1$ where kinetic descriptions are required and fluid models like Navier--Stokes are generally not applicable. 
The nonlinear refinement function is set as $n_f^{-1}0.1 M_\Theta[\bmrho_f]$ (see \Cref{subsec:refine_and_coarsen}).

\begin{figure}[htbp]
    \centering
    \begin{subfigure}{0.25\textwidth}
    \centering
        \includegraphics[width=\textwidth]{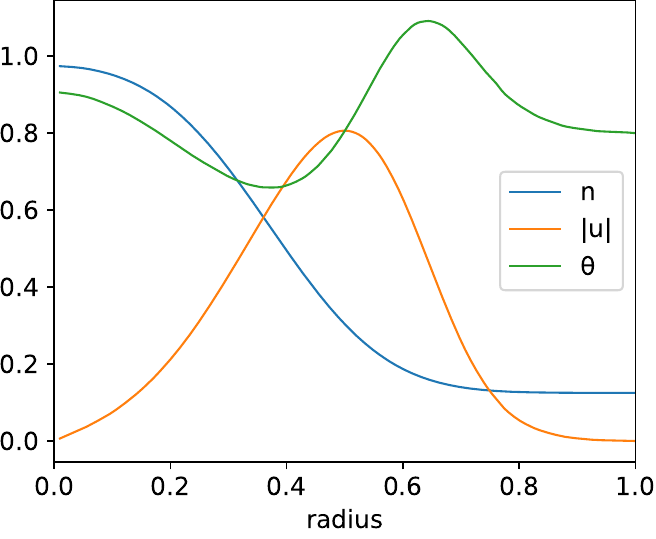}
        \caption{Fluid Variables}
        \label{fig:sod_shock:nu_1:fluid_vars_line}
    \end{subfigure}
    \hspace{0.25in}
    \begin{subfigure}{0.25\textwidth}
    \centering
        \includegraphics[width=0.9\textwidth]{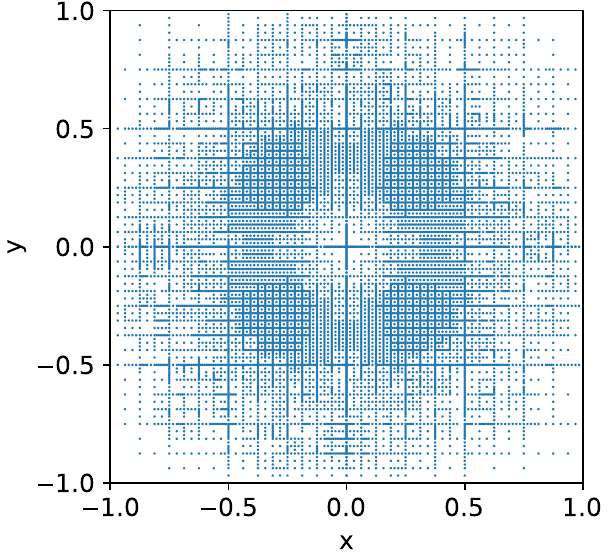}
        \caption{Position grid $\Theta_{\bmx}$}
        \label{fig:sod_shock:nu_1:grid}
    \end{subfigure}
    \hspace{0.25in}
    \begin{subfigure}{0.30\textwidth}
    \centering
        \includegraphics[width=\textwidth]{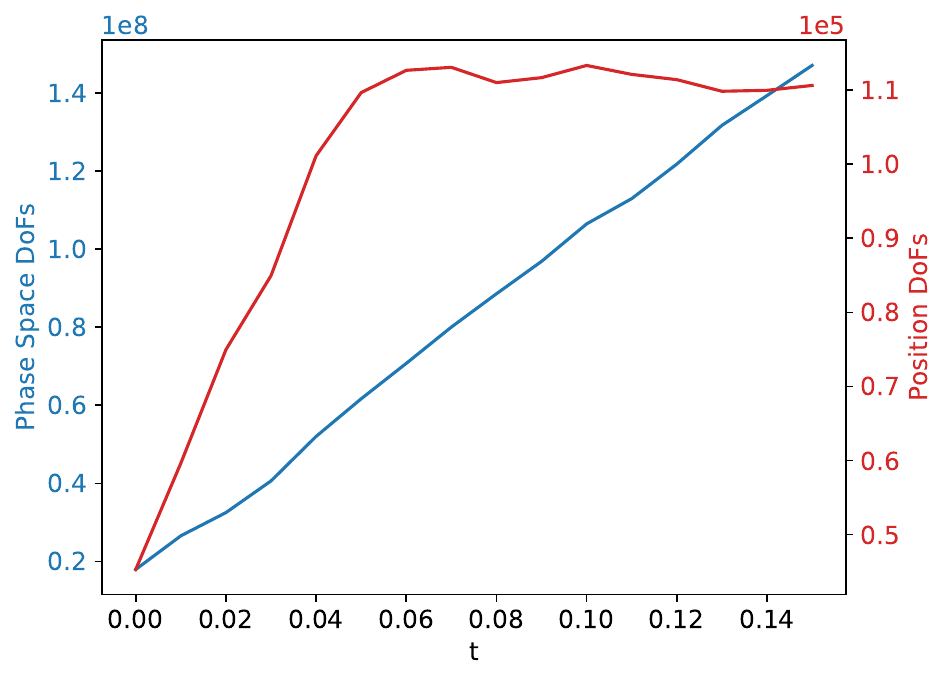}
        \caption{DoFs over time}
        \label{fig:sod_shock:nu_1:dofs}
    \end{subfigure}
    \caption{\Cref{subsec:sod:nu_1} -- $2x2v$ Sod shock tube, $\nu=1$. Plots of radial fluid variables and position grid at $t=0.15$ and DoFs over time.  A level 8,8,8,8 full-grid has approximately 347 billion (\num{3.47e11}) DoFs.}
    \label{fig:sod_shock:nu_1:fluid_vars_and_dofs}
\end{figure}

For brevity, we only plot the fluid variables and grid in \Cref{fig:sod_shock:nu_1:fluid_vars_line,fig:sod_shock:nu_1:grid} for $t=0.15$.  There is a slight difference in the fluid variables as compared to $\nu=10$, c.f.~\Cref{fig:sod_shock:nu_10:fluid_vars_line:0.15}; however, the position grid $\Theta_x$, c.f.~\Cref{fig:sod_shock:nu_10:fluid_vars:grid_0.15}, is denser.
We conjecture the increased fill in $\Theta_{\bmx}$ is because the initial contact discontinuity, being advected into phase-space, is smoothed less aggressively than in the $\nu=10$ case. 
Resolving the discontinuity in phase-space while maintaining ancestor completeness of $\Theta$ would promote more DoFs in position space.
To provide further evidence we list the times at which a finer velocity level was required to resolve the discontinuity.  The initial condition adapts to level 6 in velocity for both $\nu=10$ and $\nu=1$.
For $\nu=10$, the simulation adapts to level 7 and 8 at $t=0.05$ and $0.15$ respectively, while for $\nu=1$, the same resolution is required at $t=0.04$ and $0.08$ respectively.

\begin{figure}[htbp]
    \centering
    \begin{subfigure}{0.25\textwidth}
        \includegraphics[width=0.97\textwidth]{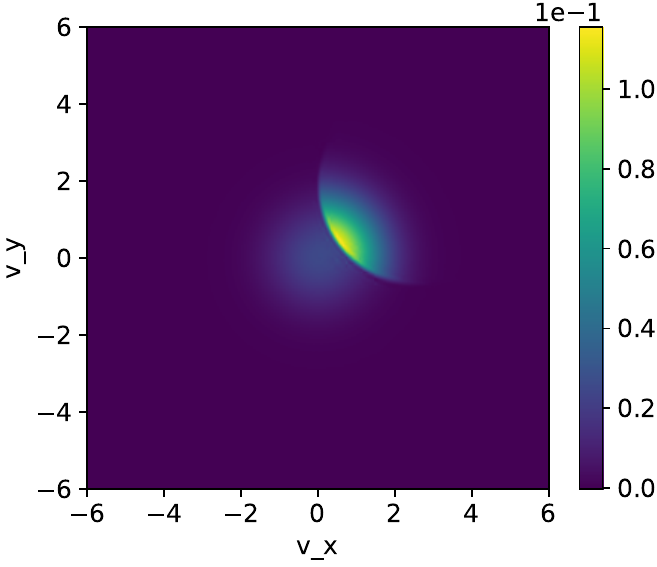}
        \caption{$f$.}
    \label{fig:sod_shock:nu_1:velocity:dist}
    \end{subfigure}
    \hspace{0.25in}
    \begin{subfigure}{0.25\textwidth}
        \includegraphics[width=\textwidth]{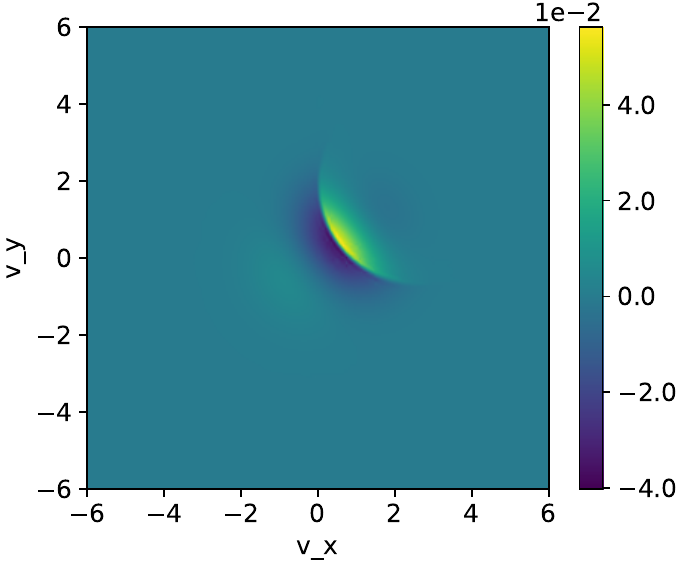}
        \caption{$f-M[\bm{\rho}_f]$.}
    \label{fig:sod_shock:nu_1:velocity:pertrub}
    \end{subfigure}
    \hspace{0.25in}
    \begin{subfigure}{0.25\textwidth}
        \includegraphics[width=\textwidth]{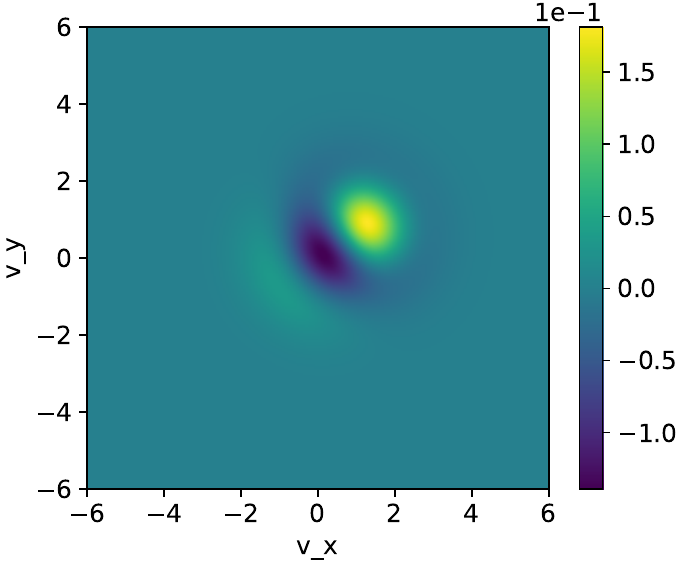}
        \caption{$g[\bmrho_f]$}
    \label{fig:sod_shock:nu_1:velocity:CNS}
    \end{subfigure}

    \caption{\Cref{subsec:sod:nu_1} -- $2x2v$ Sod shock tube, $\nu=1$. Velocity Perturbations at $\bmx=(0.5\cos({\pi}/{5}),0.5\sin({\pi}/{5}))$ at $t=0.15$. The perturbation $g[\bmrho_f]$ is given in \eqref{eqn:chapman_enskog}.}
    \label{fig:sod_shock:nu_1:velocity}
\end{figure}

\Cref{fig:sod_shock:nu_1:velocity} plots the velocity distribution as well as the kinetic perturbation $f-M[\bmrho_f]$ and Navier-Stokes perturbations $g[\bmrho_f]$ at a radius of $0.5$ and $t=0.15$.
Unlike the $\nu=1000$ and $\nu=10$ cases, the size of kinetic and CNS perturbation are off by nearly by a factor of 4, and the overall shapes do not agree.
Compared to the $\nu=10$ case (\Cref{fig:sod_shock:nu_10:velocity:Kinetic_0.5}) the support of kinetic perturbation (\Cref{fig:sod_shock:nu_1:velocity:pertrub}) is further concentrated and features a sharper gradient that is still well captured by the adaptivity procedure.
Moreover, the ansatz $M[\bmrho_f]+g[\bmrho_f]$ used in the compressible Navier--Stokes equation is negative at some $\bmv$ for this choice of $\nu$ and $\bmrho_f$.

\Cref{fig:sod_shock:nu_1:dofs} plots the phase-space and position DoFs over time.  There is a saturation of the position DoFs and a steady increase of the phase-space DoFs.
At $t=0.15$ the phase space and position DoFs are approximately 150 million and 110 thousand respectively; this gives an average per velocity dimension DoF of $\dofvavg\approx 36.4$.
The simulation with maximal phase-space DoFs of 148 million uses about 0.04\% the DoFs of a level 8 full-grid simulation.
The simulation took 6 hours and 38 minutes. 
We do not list the global conservation quantities as they are similar to \Cref{fig:sod_shock:nu_1000:cons}.

\subsection{\texorpdfstring{$\bm{2x2v}$}{2x2v} shear flow}
\label{subsec:shearflow}

We consider the shear flow problem from \cite[Section 4.2.1]{dektor2026InterpolatoryDynamical} and \cite[Section 7.2]{einkemmer2021EfficientDynamical}.
This problem tests the adaptive sparse-grid method's ability to capture small perturbations of the distribution and maintain long-time conservation of the collision invariants.

The position and velocity domains are given by $\W_{\bmx}=[0,1]^2$ and $\W_{\bmv}=[-6,6]^2$.
The initial condition is a Maxwellian with $n=\theta=1$ and bulk velocity components
\begin{equation}\label{eqn:shear_flow:initial_velocity}
    u_x(x,y) = v_0
    \begin{cases}
        \tanh\big(\frac{y-0.25}{\Delta}\big)&\text{if }y\leq 0.5 \\[0.2em]
        \tanh\big(\frac{0.75-y}{\Delta}\big)&\text{if }y> 0.5,
    \end{cases}
    \qquad\text{and}\qquad
    u_y(x,y) = \delta\sin(2\pi x),
\end{equation}
where $v_0=0.1$, $\Delta=1/30$, and $\delta=5\times 10^{-3}$.
We consider the highly collisional case of $\nu=10^4$.

\paragraph{Discretization Specifics} 
We set $k=3$ and a maximum level of $5,5,2,2$; this corresponds to a full-grid DoF of \num{4194304}.
We set $\dt = 10^{-4}$, which is approximately 27\% the maximum explicit timestep, and run for \num{120000} timesteps to a time $t=12.0$.
We set $\tau_\text{rel} = 10^{-5}$ and $10^{-6}$ and $\tau_\text{abs} = 0$.  
We do not set a nonlinear refinement function.

\paragraph{Results}

\Cref{fig:shear_flow:fluid_vars} plots the density, bulk velocity, and vorticity $\omega=\grad\times\bmu=\partial_{x}u_y-\partial_yu_x$ for $\tau_\text{rel}$ of $10^{-5}$ and $10^{-6}$.
Comparing the adaptive sparse grid with $\tau_\text{rel}=10^{-5}$ to \cite[Figure 2]{dektor2026InterpolatoryDynamical} and \cite[Figure 1]{einkemmer2021EfficientDynamical}, there is good agreement between the bulk velocity while the vorticity shows under-resolved behavior in the adaptive sparse grid around $(x,y)=(0.5,0.75)$ (see \Cref{fig:shear_flow:fluid_vars:vort_1e-5}).
Decreasing the relative threshold to $10^{-6}$ resolves the vorticity  on a finer position grid shown with red dots (see \Cref{fig:shear_flow:fluid_vars:vort_1e-6}).
We notice a discrepancy between the density in \cite[Figure 2]{dektor2026InterpolatoryDynamical} and \Cref{fig:shear_flow:fluid_vars:den_1e-5} which we believe is attributed to a mass conservation error on the order of the density perturbation $10^{-3}$, c.f.~\cite[Figure 3]{dektor2026InterpolatoryDynamical}, for the interpolative DLR scheme. 

\begin{figure}[htbp]
    \centering
    \begin{subfigure}{0.24\textwidth}
        \includegraphics[width=\textwidth]{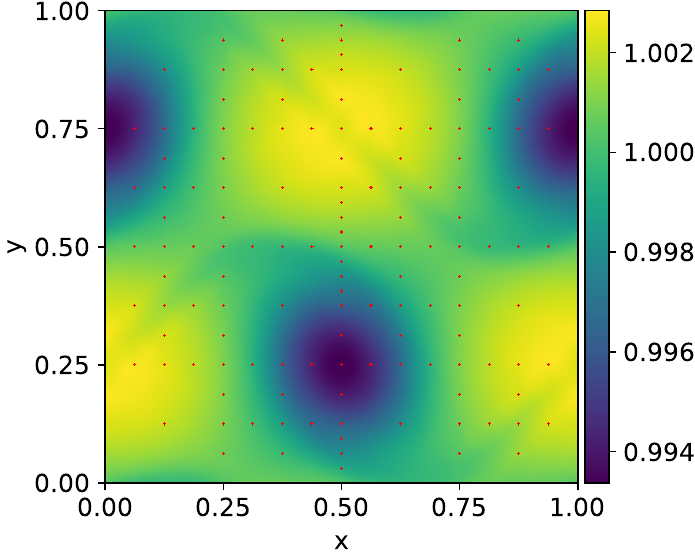}
        \caption{$n$, $\tau_\text{rel}=10^{-5}$}
        \label{fig:shear_flow:fluid_vars:den_1e-5}
    \end{subfigure}
    \begin{subfigure}{0.245\textwidth}
        \includegraphics[width=\textwidth]{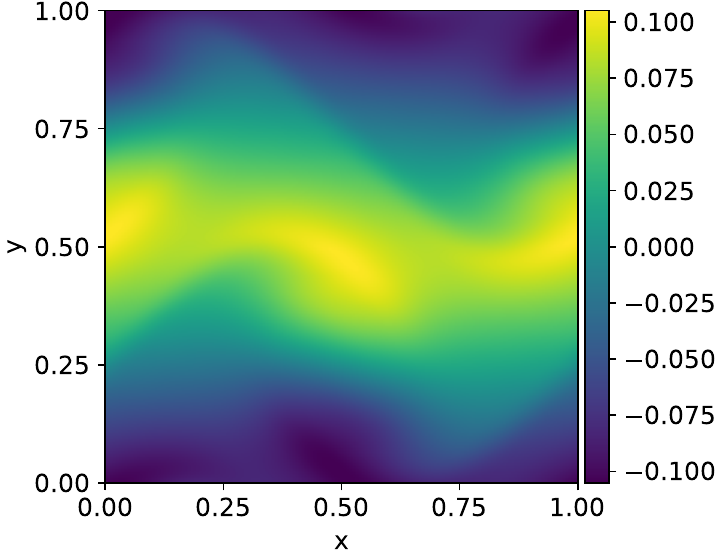}
        \caption{$u_x$, $\tau_\text{rel}=10^{-5}$}
        \label{fig:shear_flow:fluid_vars:ux_1e-5}
    \end{subfigure}
    \begin{subfigure}{0.245\textwidth}
        \includegraphics[width=\textwidth]{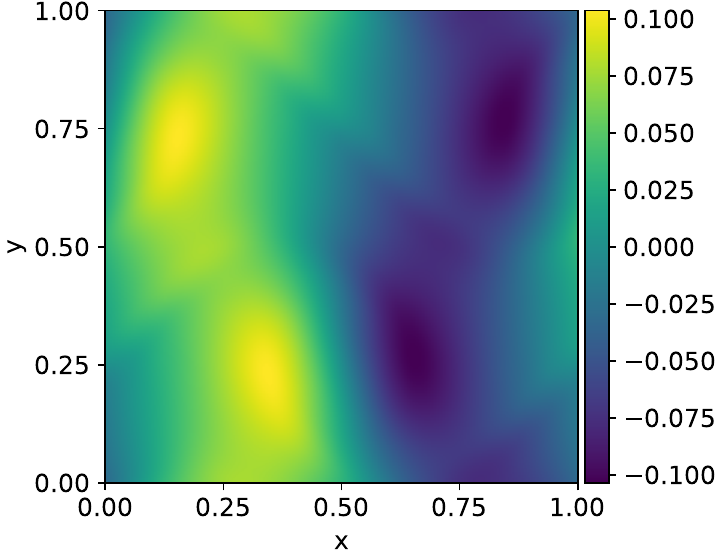}
        \caption{$u_y$, $\tau_\text{rel}=10^{-5}$}
        \label{fig:shear_flow:fluid_vars:uy_1e-5}
    \end{subfigure}
    \begin{subfigure}{0.235\textwidth}
        \includegraphics[width=\textwidth]{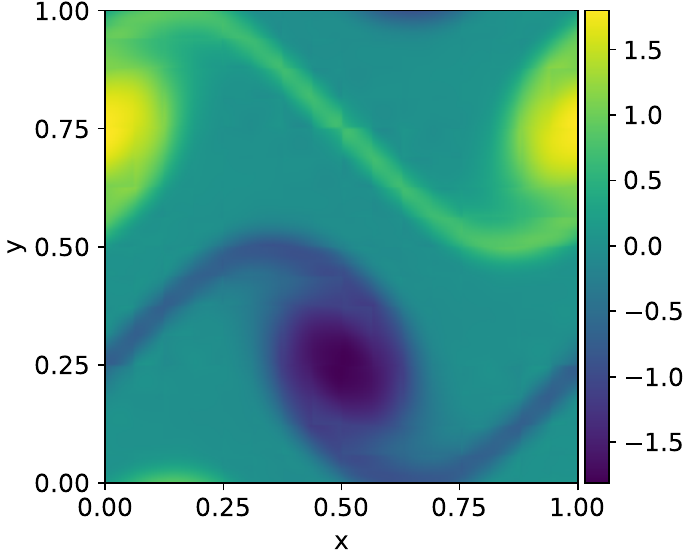}
        \caption{$\omega$, $\tau_\text{rel}=10^{-5}$}
        \label{fig:shear_flow:fluid_vars:vort_1e-5}
    \end{subfigure}

    \vspace{2ex}

    \begin{subfigure}{0.24\textwidth}
        \includegraphics[width=\textwidth]{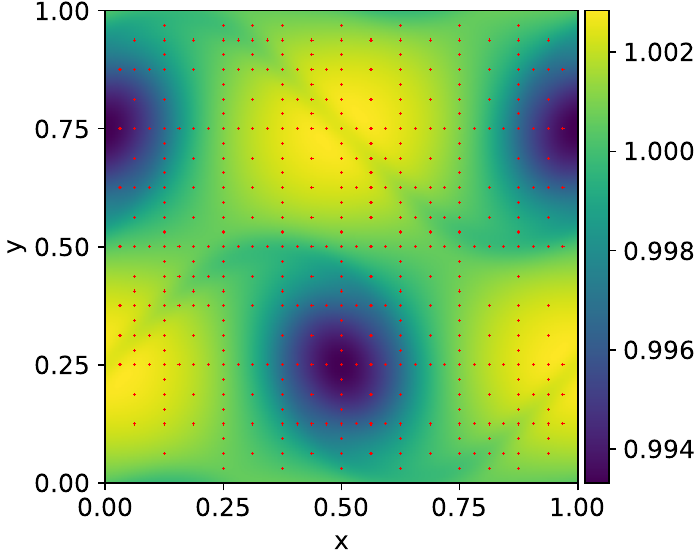}
        \caption{$n$, $\tau_\text{rel}=10^{-6}$}
        \label{fig:shear_flow:fluid_vars:den_1e-6}
    \end{subfigure}
    \begin{subfigure}{0.245\textwidth}
        \includegraphics[width=\textwidth]{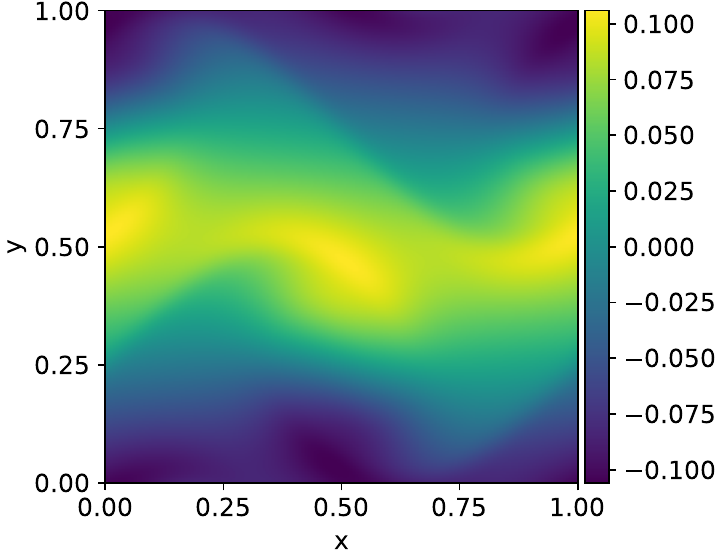}
        \caption{$u_x$, $\tau_\text{rel}=10^{-6}$}
        \label{fig:shear_flow:fluid_vars:ux_1e-6}
    \end{subfigure}
    \begin{subfigure}{0.245\textwidth}
        \includegraphics[width=\textwidth]{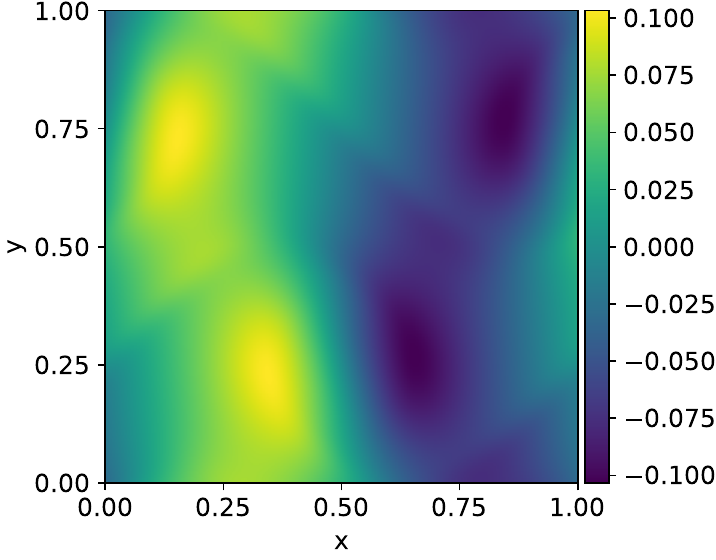}
        \caption{$u_y$, $\tau_\text{rel}=10^{-6}$}
        \label{fig:shear_flow:fluid_vars:uy_1e-6}
    \end{subfigure}
    \begin{subfigure}{0.235\textwidth}
        \includegraphics[width=\textwidth]{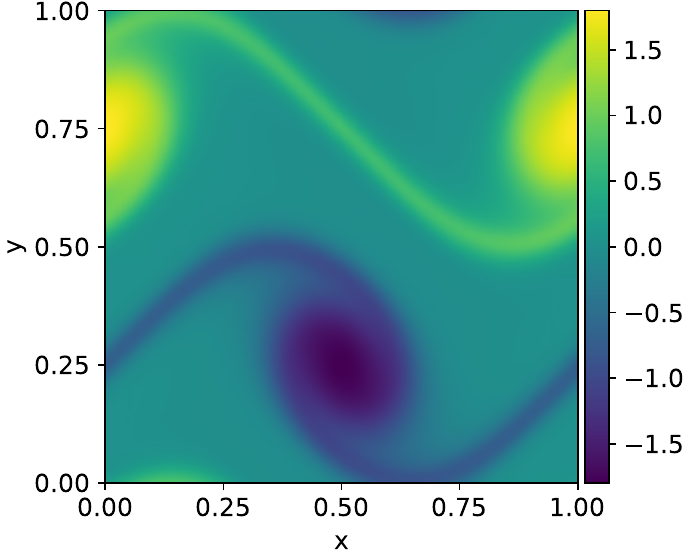}
        \caption{$\omega$, $\tau_\text{rel}=10^{-6}$}
        \label{fig:shear_flow:fluid_vars:vort_1e-6}
    \end{subfigure}

    \caption{\Cref{subsec:shearflow} -- $2x2v$ Shear flow, $\nu=10^4$. Plots of position quantities at $t=12.0$ and relative adaptive thresholds $\tau_\text{rel}=10^{-5}$ and $10^{-6}$.  The red dots in the density correspond to the position grid.}
    \label{fig:shear_flow:fluid_vars}
\end{figure}

We now test the effects on solution quality if the phase-space interpolated Maxwellian $I_\Theta M[\bmrho_f]$ is used instead of $M_\Theta[\bmrho_f]$.
We use the moments of $f_h^{(\cdot,*)}$ to build the sources in \eqref{eqn:IMEX_RK}, thus the conservation laws will fail to hold.
We set the maximum level as $5,5,4,4$ and include an adapt weight of $I_\Theta M[\bmrho_f]$; we find the maximum level of 4 in velocity is needed to accurately interpolate the Maxwellian.
\Cref{fig:shear_flow:interp_densities} plots the density at $t=12.0$ for $\tau_\text{rel}$ of \num{1e-6}, \num{5e-7}, and \num{1e-7}, and the results show that only $\tau_\text{rel}=\num{1e-7}$ is usable; looser tolerances exhibit numerical artifacts well above the true density perturbation.

\begin{figure}[htbp]
    \centering
    \begin{subfigure}{0.32\textwidth}
        \centering
        \includegraphics[width=\textwidth]{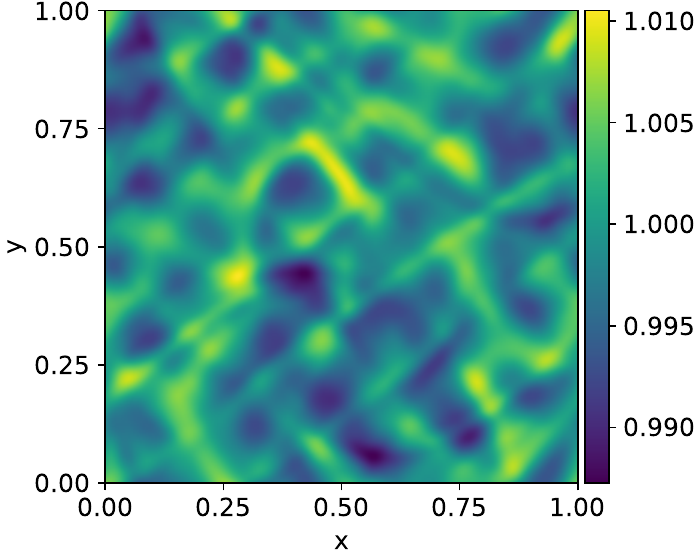}
        \caption{$\tau_\text{rel}=10^{-6}$}
        \label{fig:shear_flow:interp_densities_1e-6}
    \end{subfigure}
    \hfill
    \begin{subfigure}{0.32\textwidth}
        \centering
        \includegraphics[width=\textwidth]{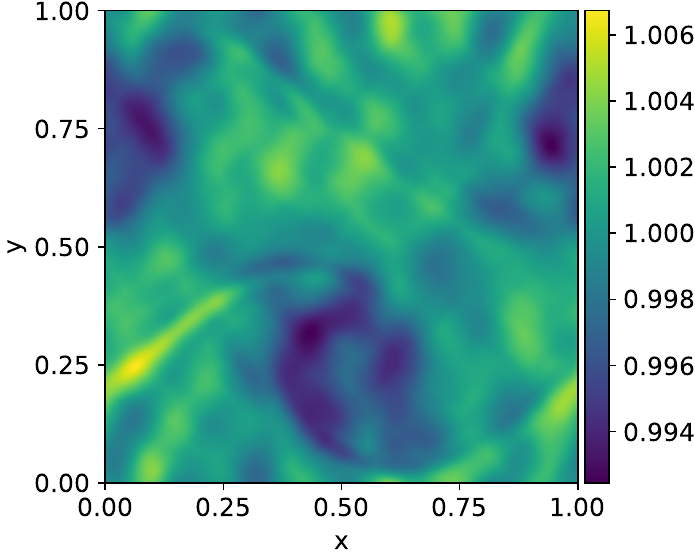}
        \caption{$\tau_\text{rel}=5\times 10^{-7}$}
        \label{fig:shear_flow:interp_densities_5e-7}
    \end{subfigure}
    \hfill
    \begin{subfigure}{0.32\textwidth}
        \centering
        \includegraphics[width=\textwidth]{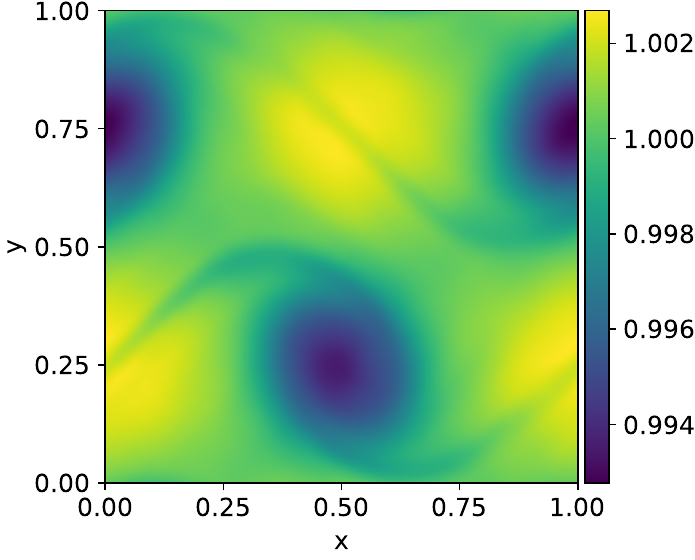}
        \caption{$\tau_\text{rel}=10^{-7}$}
        \label{fig:shear_flow:interp_densities_1e-7}
    \end{subfigure}
    \caption{\Cref{subsec:shearflow} -- $2x2v$ Shear flow, $\nu=10^4$. Plots of the density $n_f$ at $t=12.0$ using the phase-space interpolated Maxwellian $I_\Theta M[\bmrho_f]$ at various $\tau_\text{rel}$.}
    \label{fig:shear_flow:interp_densities}
\end{figure}

\Cref{fig:shear_flow:dofs_and_cons:dofs} plots the phase-space and position DoFs for the hybrid Maxwellian with $\tau_\text{rel}=10^{-5}$ and $10^{-6}$ and the phase-space interpolated Maxwellian with $\tau_\text{rel}=10^{-7}$.
Fine-scale oscillations formed by the lack of conservation in phase-space interpolated collision lead to a dense position grid by $t\approx 1$ with phase-space DoFs being near a constant multiple of position DoFs.
The hybrid Maxwellian requires significantly fewer DoFs to sufficiently approximate the Maxwellian in the fluid regime and leads to several orders of magnitude fewer phase-space DoFs than the phase-space interpolated Maxwellian being required to capture fluid behavior. 

\begin{figure}[htbp]
    \centering
    \begin{subfigure}{0.40\textwidth}
        \centering
        \includegraphics[height=1.5in]{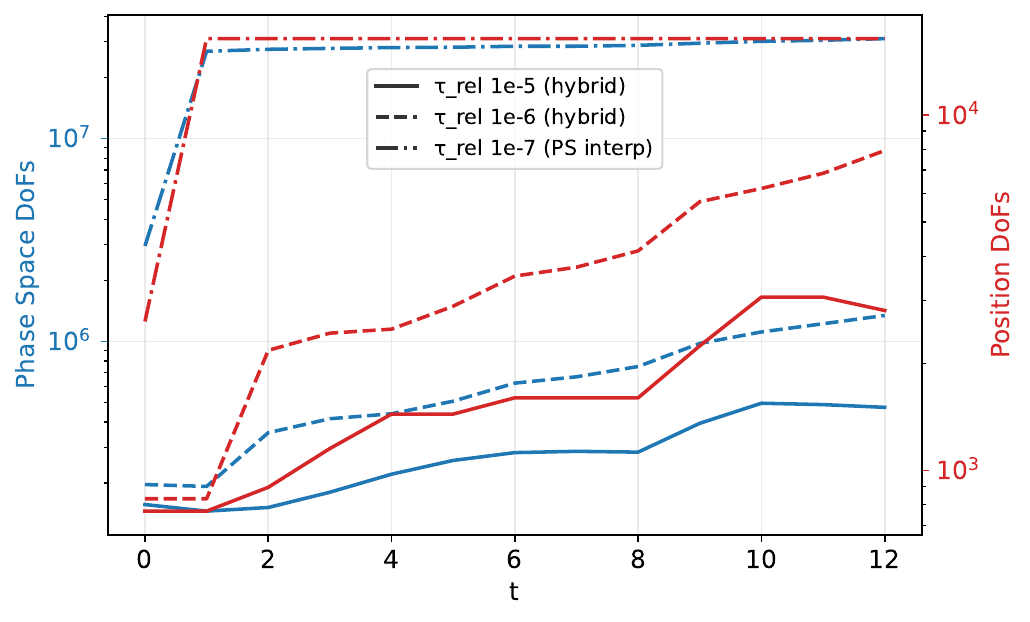}
        \caption{DoFs}
        \label{fig:shear_flow:dofs_and_cons:dofs}
    \end{subfigure}
    \begin{subfigure}{0.40\textwidth}
        \centering
        \includegraphics[height=1.5in]{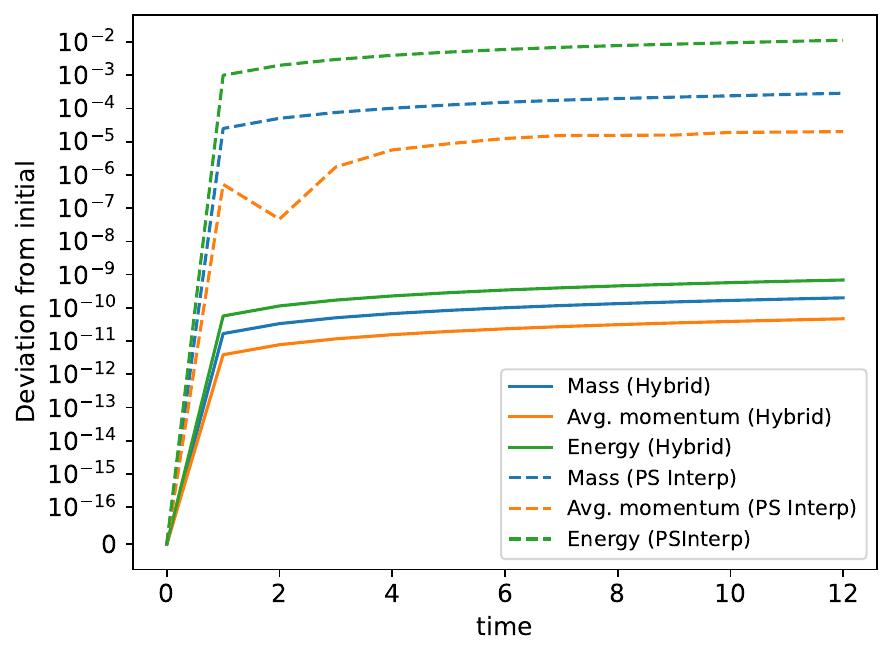}
        \caption{Conservation.}
        \label{fig:shear_flow:dofs_and_cons:cons}
    \end{subfigure}
    \caption{\Cref{subsec:shearflow} -- $2x2v$ Shear flow, $\nu=10^4$. Plots of DoFs and conservation deviations for $\tau_\text{rel}=10^{-5}$ and $10^{-6}$ with hybrid interpolation $M_\Theta[\bmrho]$ (with maximum level 5,5,2,2) and $\tau_\text{rel}=10^{-7}$ with phase-space interpolation $I_\Theta M[\bmrho]$ (with maximum level 5,5,4,4).
    The deviations for $\tau=10^{-6}$ with hybrid interpolation is similar to $\tau=10^{-5}$ and is omitted.
    }
    \label{fig:shear_flow:dofs_and_cons}
\end{figure}

We now consider only the hybrid Maxwellian runs.
We report \num{472576} and \num{1311184} phase-space DoFs at $t=12.0$ for $\tau_\text{rel}$ of $10^{-5}$ and $10^{-6}$ respectively and note the DoF required are significantly fewer than the level 5,5,2,2 full-grid DoFs of 4.19 million (\num{4.19e6}).
The adaptive interpolatory DLR approach in \cite{dektor2026InterpolatoryDynamical} had a maximal rank of 22 which corresponds to \num{366564} phase-space DoFs.
Overall, the DoF increase in \Cref{fig:shear_flow:dofs_and_cons:dofs} follows a similar trend to the rank increase in \cite[Figure 3]{dektor2026InterpolatoryDynamical}.
The average per velocity dimension DoF of $\dofvavg\approx 13.0$ for both tolerances, showing that the velocity grid is mostly dense and the computational savings are primarily from the adaptivity in position space.
The simulation with $\tau_\text{rel}=10^{-5}$ took 35 minutes 39 seconds.

\Cref{fig:shear_flow:dofs_and_cons:cons} plots the error in the total mass, momentum, and energy from the initial condition.
Since this problem is equipped with periodic boundary conditions, it has a provable global conservation property which well captured by the results for the hybrid Maxwellian.
The phase-space interpolatory Maxwellian shows conservation error several orders of magnitude above the relative tolerance.

\subsection{Expansion problem comparison}

We use the problem setup from \cite[Section 4.3]{dektor2026InterpolatoryDynamical} in order to compare the trends in DoFs between low-rank and sparse-grid approaches over varying collision frequencies. 
Let $\W_x=[-3,3]^d$ and $\W_v=[-6,6]^d$ and consider the separable initial condition
\begin{equation}\label{eqn:expansion:separable_ic}
    f(\bmx,\bmv,0) = \frac{1}{m}\bigg(1+0.25\prod_{i=1}^d\exp\Big(-\frac{x_i^2}{2\sigma^2}\Big)\bigg)\bigg(\prod_{i=1}^d\exp\Big(-\frac{v_i^2}{2}\Big)\bigg)
\end{equation}
where $\sigma=0.25$ and $m$ is chosen such that $\int_{\Omega_{\bmx}}\int_{\Omega_{\bmv}} f\,\mathrm{d}\bmv\,\mathrm{d}\bmx=1$.
We set periodic boundary conditions.

\subsubsection{\texorpdfstring{$\bm{2x2v}$}{2x2v} with \texorpdfstring{$\bm{\nu=10}$}{ν=10}}
\label{subsec:expansion:2x2v}

\paragraph{Discretization Specifics}

We let $d=2$ and $\nu=10$.  We use a maximum level of $5,5,3,3$ with $k=3$ to match the full-grid DoF as in \cite{dektor2026InterpolatoryDynamical}.
We set $\dt=2\times 10^{-3}$, which is approximately 90\% of $\dt_\text{expl}$ in \eqref{eqn:maximum_explicit_timestep}, and run for 375 steps.
We do not set a nonlinear refinement function and set $\tau_\text{abs}=0$.

\begin{figure}[!htbp]
    \centering
    \begin{subfigure}{0.321\textwidth}
        \includegraphics[width=\textwidth]{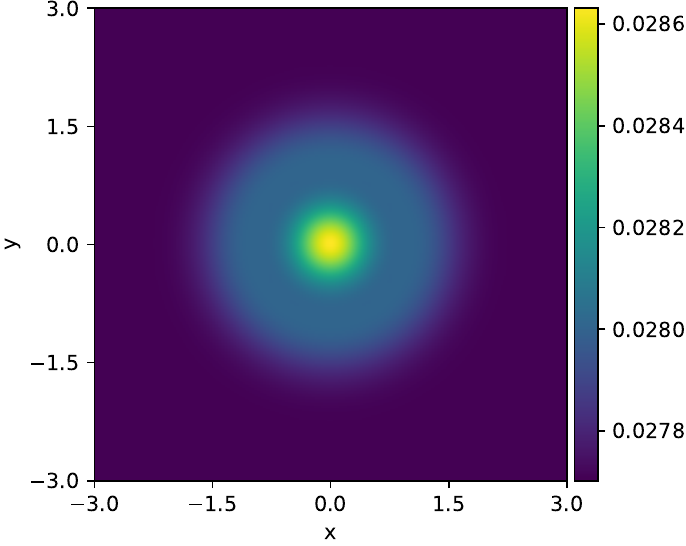}
        \caption{$n$}
        \label{fig:expansion:2x2v:fluid_vars:den_1e-5}
    \end{subfigure}
    \begin{subfigure}{0.315\textwidth}
        \includegraphics[width=\textwidth]{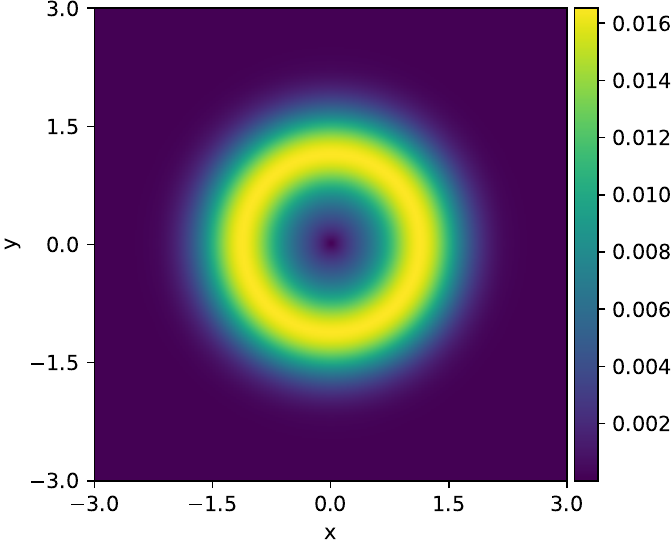}
        \caption{$|\bmu|$}
        \label{fig:expansion:2x2v:fluid_vars:vel_1e-5}
    \end{subfigure}
    \begin{subfigure}{0.31\textwidth}
        \includegraphics[width=\textwidth]{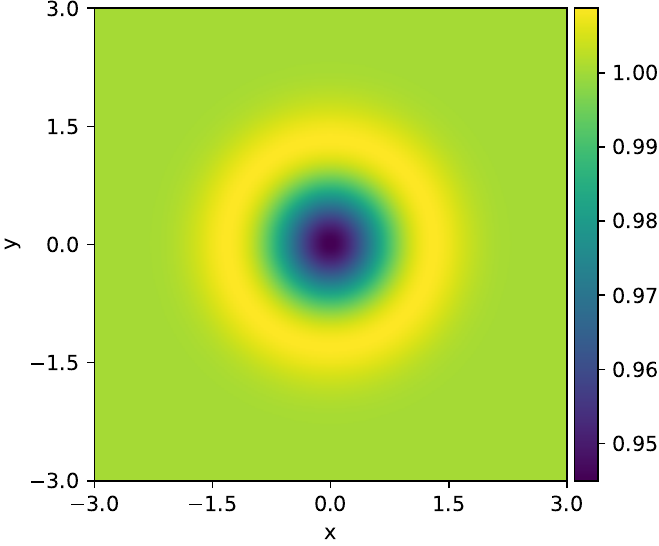}
        \caption{$\theta$}
        \label{fig:expansion:2x2v:fluid_vars:temp_1e-5}
    \end{subfigure}
    \caption{\Cref{subsec:expansion:2x2v} -- $2x2v$ Expansion problem, $\nu=10$. Plots of the fluid variables at $t=0.75$ and $\tau_\text{rel}=10^{-5}$.}
    \label{fig:expansion:2x2v:fluid_vars}
\end{figure}

\paragraph{Results}

\Cref{fig:expansion:2x2v:fluid_vars} plots the fluid variables at $t=0.75$ and $\tau_\text{rel}=10^{-5}$.
There is good agreement with the density plot in \cite[Figure 7]{dektor2026InterpolatoryDynamical}, but we observe a steeper density gradient and a higher density peak in the adaptive sparse-grid version.

\begin{figure}[!htbp]
    \centering
    \includegraphics[width=0.4\linewidth]{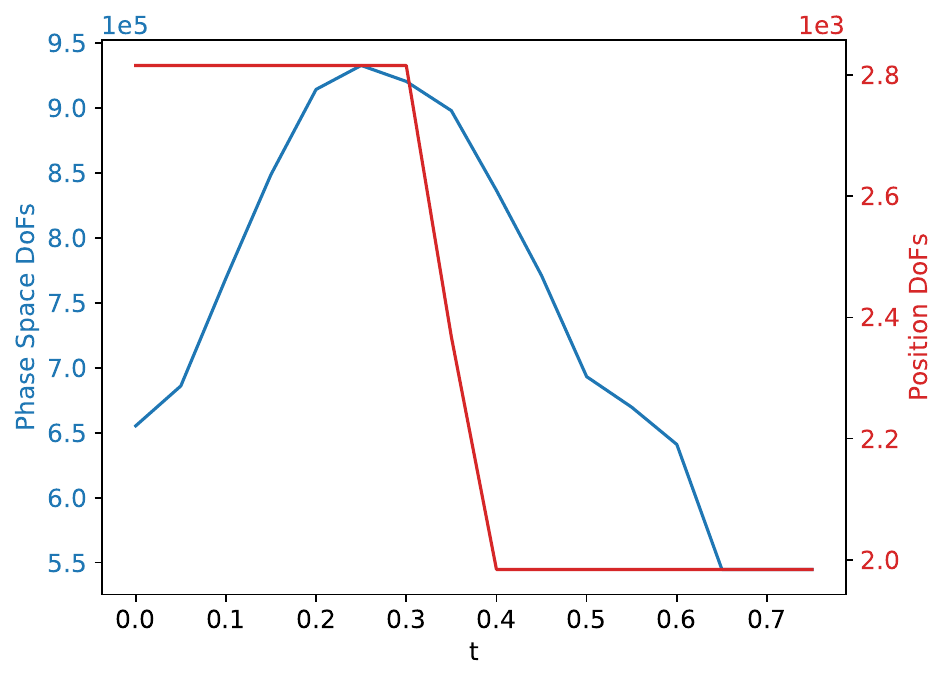}
    \caption{\Cref{subsec:expansion:2x2v} -- $2x2v$ Expansion problem, $\nu=10$. Degrees of Freedom over time for the $2x2v$ expansion problem with $\nu=10$ and $\tau_\text{rel}=10^{-5}$.  A level $5,5,3,3$ full-grid corresponds to 16.7 million DoFs}
    \label{fig:expansion:2x2v:dofs}
\end{figure}

The phase space and position DoFs against time are plotted in \Cref{fig:expansion:2x2v:dofs}.
There is a rise in the phase-space DoFs until $t=0.25$, followed by a steady decrease until plateauing at $t=0.65$.
This behavior is in contrast to \cite[Figure 7]{dektor2026InterpolatoryDynamical} where the rank, which is directly correlated to phase-space DoFs, only increases in time until hitting the user defined maximum rank ceiling.
Therefore, while the distribution exhibits increasing rank behavior across position and velocity coupling, it is still smooth and therefore well compressed by adaptive sparse grids.
We report an average velocity resolution of $\dofvavg\approx16.6$.

To verify accuracy, we compare the adaptive sparse-grid simulation to a reference solution computed with full-grid simulation of level $6,6,4,4$ and timestep $\dt=5\times 10^{-4}$.
\Cref{fig:expansion:2x2v:errors:phase_space} plots the relative $L^2$ error to the reference as a function of the phase-space DoFs for various $\tau_\text{rel}$ and shows starting  $\tau_\text{rel}=3.16\times 10^{-5}$.
\Cref{fig:expansion:2x2v:errors:position_space} plots the error against the reference moments and shows similar behavior as the phase space case, but we note a slightly tighter $\tau_\text{rel}$ is needed for saturation.
Additionally, the relative error of the density $\rho_1$ and energy $\rho_5$ agree well with $\tau_\text{rel}$ until saturation.
The larger magnitude in the momentum error is due to $\|\bmu\|_{L^2}\ll 1$.
The maximum phase-space DoFs for $\tau_\text{rel}=10^{-5}$, found at $t=0.25$, is \num{932864}, which is about $5.6\%$ the level $5,5,3,3$ full-grid DoFs.
The simulation for $\tau_\text{rel}=10^{-5}$ took 13.4 seconds.

We omit conservation plots but remark that deviations from the initial condition are at most $1.35\times 10^{-15}$.

\begin{figure}[!htbp]
    \centering
    \begin{subfigure}{0.4\textwidth}
        \includegraphics[width=\textwidth]{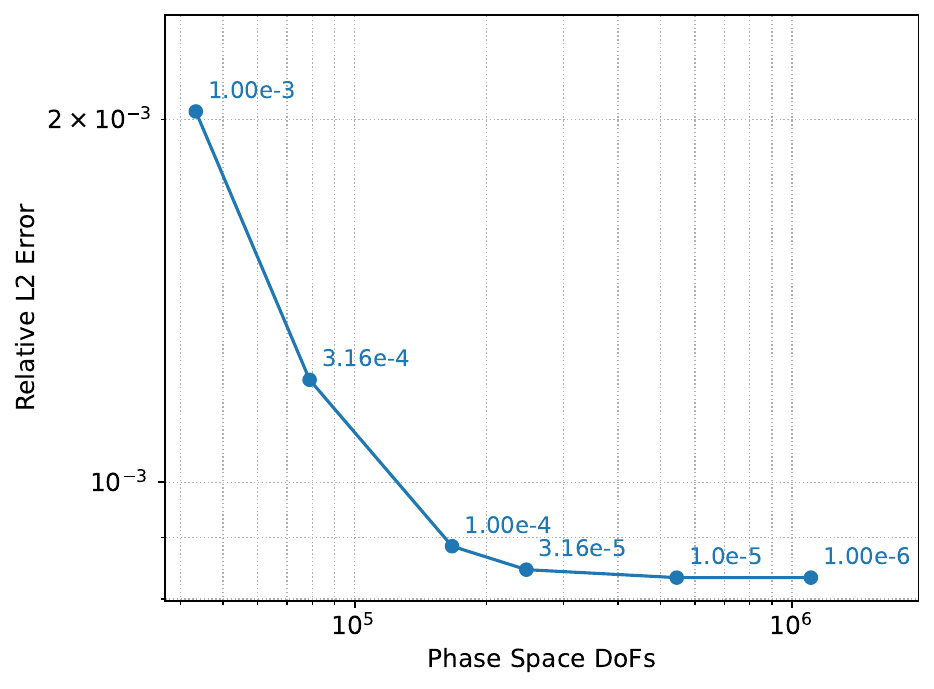}
        \caption{Error of $f$ in $L^2(\Omega)$}
        \label{fig:expansion:2x2v:errors:phase_space}
    \end{subfigure}
    \hspace{0.75in}
    \begin{subfigure}{0.4\textwidth}
        \includegraphics[width=\textwidth]{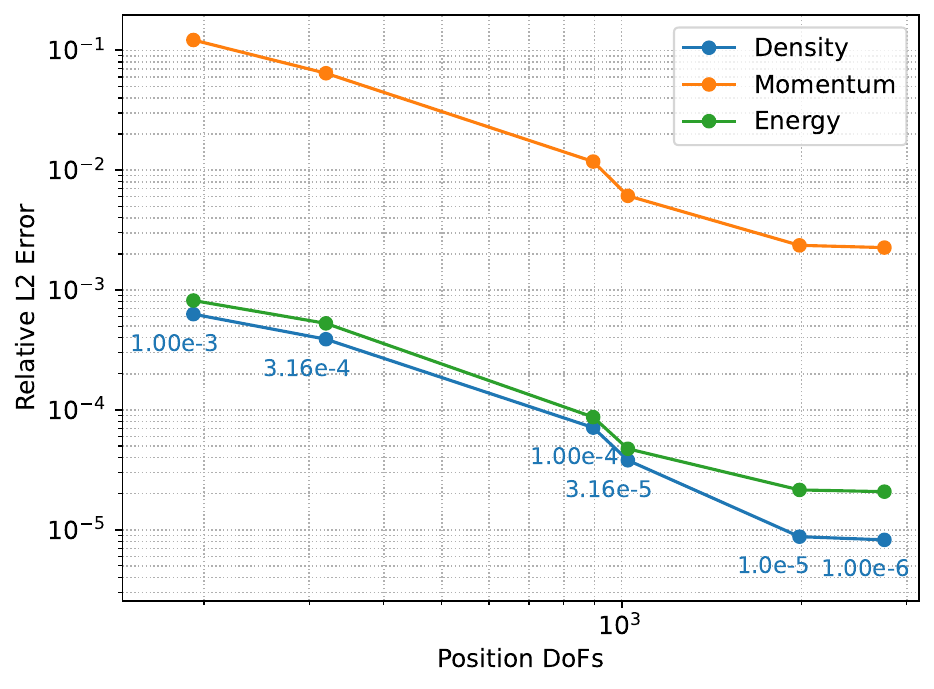}
        \caption{Error of $\bmrho_f$ in $L^2(\Omega_x)$}
        \label{fig:expansion:2x2v:errors:position_space}
    \end{subfigure}
    \caption{\Cref{subsec:expansion:2x2v} -- $2x2v$ Expansion problem, $\nu=10$. Relative $L^2$ errors against DoFs of the adaptive sparse grid method at $t=0.75$.  The markers on the plot indicated the relative adaptive tolerance $\tau_\text{rel}$. The reference solution is a full-grid simulation at one level finer than the adaptive sparse-grid maximum level in all dimensions.}
    \label{fig:expansion:2x2v:errors}
\end{figure}

\subsubsection{\texorpdfstring{$\bm{3x3v}$}{3x3v} with \texorpdfstring{$\bm{\nu=1000}$}{ν=1000}}
\label{subsec:expansion:3x3v}

We consider the same initial condition but as a $3x3v$ problem and high collisionality.

\paragraph{Discretization Specifics}

Let $d=3$ and $\nu=1000$.
We use a maximum level of $6,6,6,3,3,3$ and $k=3$ which aligns with the per dimension DoF given in \cite[Section 4.3.2]{dektor2026InterpolatoryDynamical}; this corresponds to $2^{29}\cdot 4^6\approx 550$ billion DoFs.
We set $\dt=1/1350\approx 7.407\times 10^{-4}$ which is $99.5\%$ $\dt_\text{expl}$, but
in the following tests, the position levels never exceeded 5; hence $\dt$ is only $49\%$ $\dt_\text{expl}$ with position level 5.
The final time is $t=1.0$.  
We set $\tau_\text{rel}=5\times 10^{-6}$ and $\tau_\text{abs} = 0$, and we do not set a nonlinear refinement function.

\begin{figure}[!htbp]
    \centering
    \begin{subfigure}{0.315\textwidth}
        \includegraphics[width=\textwidth]{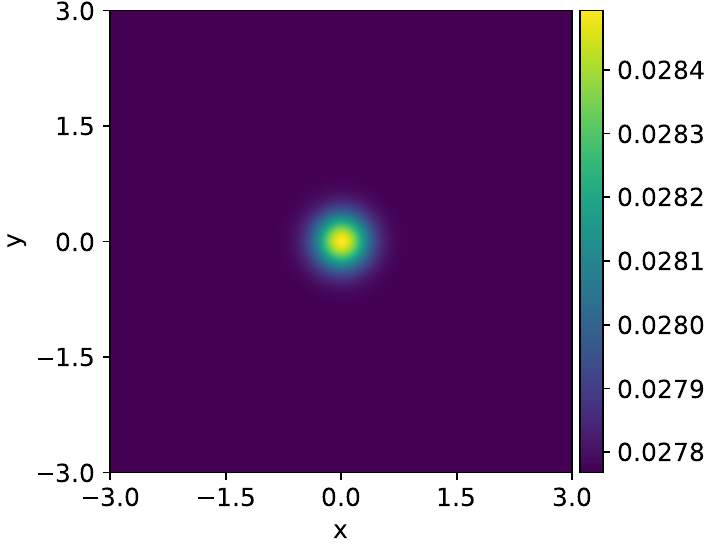}
        \caption{$t=0.0$}
        \label{fig:expansion:3x3v:density:t_0p0}
    \end{subfigure}
    \begin{subfigure}{0.32\textwidth}
        \includegraphics[width=\textwidth]{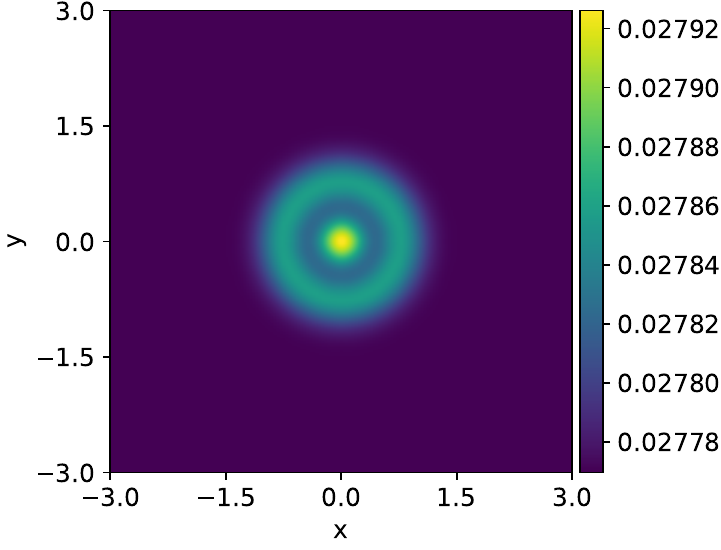}
        \caption{$t=0.5$}
         \label{fig:expansion:3x3v:density:t_0p5}
    \end{subfigure}
    \begin{subfigure}{0.32\textwidth}
        \includegraphics[width=\textwidth]{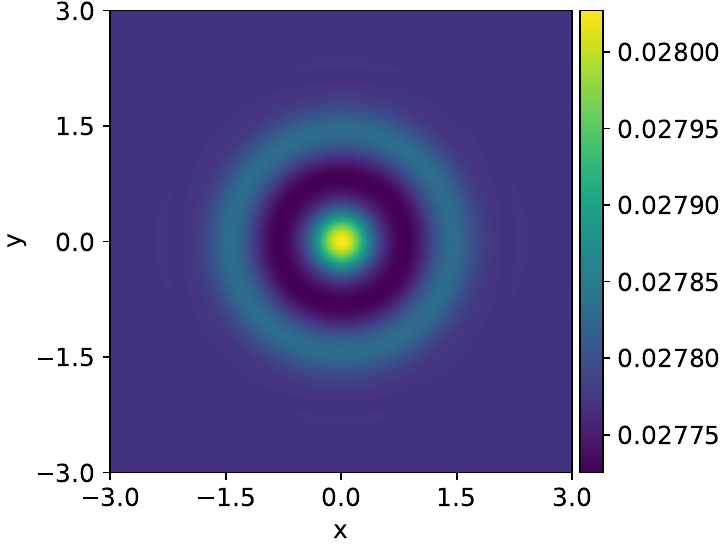}
        \caption{$t=1.0$}
         \label{fig:expansion:3x3v:density:t_1p0}
    \end{subfigure}
    \caption{\Cref{subsec:expansion:3x3v} -- $3x3v$ Expansion problem, $\nu=1000$. Plots of the marginal $(x,y)$ density $\int_{-3}^3 n_f(x,y,z)\dx{z}$ for various times.}
    \label{fig:expansion:3x3v:density}
\end{figure}

\paragraph{Results}

\Cref{fig:expansion:3x3v:density} plots the marginal $(x,y)$ density $\int_{-3}^3 n_f(x,y,z)\dx{z}$ for various times, and the plots largely agree with \cite[Figure 9]{dektor2026InterpolatoryDynamical}.
To further confirm correctness, \Cref{fig:expansion:3x3v:fluid_vars_comp} compares a radial cut of the fluid variables to the high resolution DG compressible Euler solver and shows good agreement away from the origin.

\begin{figure}[!htbp]
    \centering
    \begin{subfigure}{0.25\textwidth}
        \includegraphics[width=\textwidth]{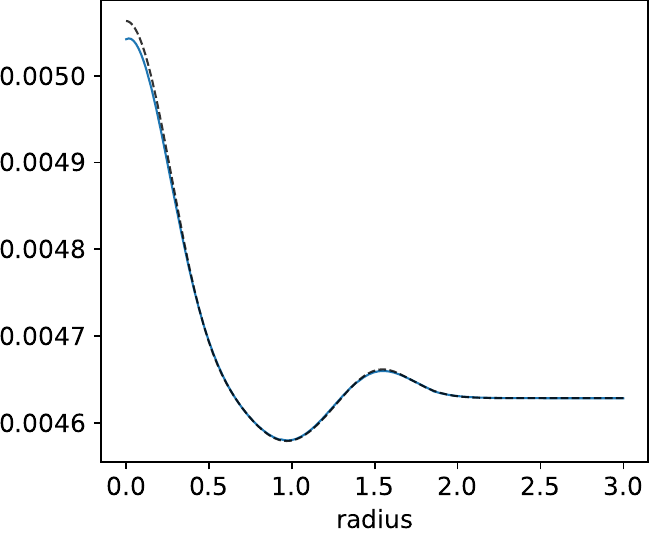}
        \caption{Density $n$}
        \label{fig:expansion:3x3v:fluid_vars_comp:density}
    \end{subfigure}
    \hspace{0.25in}
    \begin{subfigure}{0.25\textwidth}
        \includegraphics[width=\textwidth]{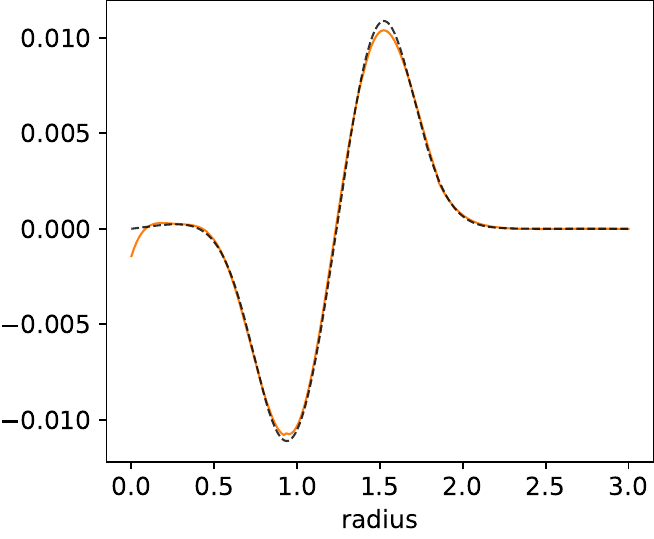}
        \caption{Radial velocity $u_r$}
        \label{fig:expansion:3x3v:fluid_vars_comp:velocity}
    \end{subfigure}
    \hspace{0.25in}
    \begin{subfigure}{0.237\textwidth}
        \includegraphics[width=\textwidth]{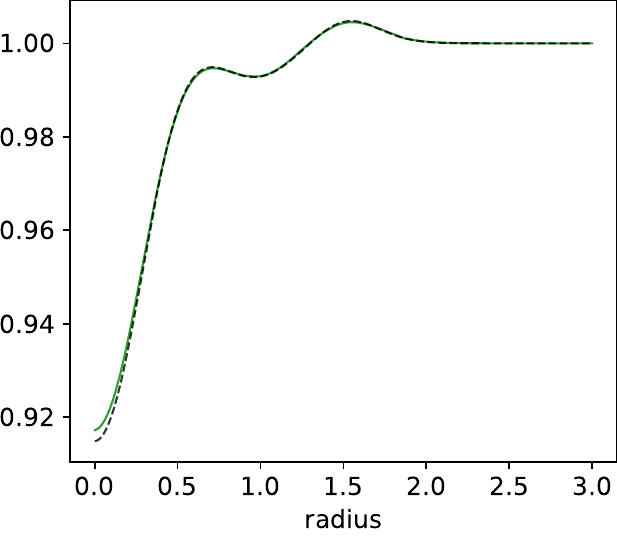}
        \caption{Temperature $\theta$}
        \label{fig:expansion:3x3v:fluid_vars_comp:temperature}
    \end{subfigure}
    \caption{\Cref{subsec:expansion:3x3v} -- $3x3v$ Expansion problem, $\nu=1000$. Radial line plots of the fluid variables along the ray $(0.2939,0.5090,0.8090)\in S^2$ at $t=1.0$.  Black dashed lines are from high resolution DG solution of the compressible Euler equations.}
    \label{fig:expansion:3x3v:fluid_vars_comp}
\end{figure}

\Cref{fig:expansion:3x3v:dofs} plots the phase space and position degrees of freedom over time.
Unlike the other collisional plots, the phase space and positional DoFs are less correlated.
We attribute this to the smoothness of the simulation, as the average velocity DoFs per dimension is only $\dofvavg\approx12.8$ at $t=0.4$.
The spike at $t=1.0$ is to the distribution starting to push out of the radius $r=1.5$ (see \Cref{fig:expansion:3x3v:density:t_1p0}). 
The trend in phase-space DoFs is similar to the interpolatory low-rank scheme \cite[Figure 8]{dektor2026InterpolatoryDynamical} if the last increase of DoFs is ignored.

The maximum number of phase space DoFs used is approximately 83 million at $t=0.4$, 
Since this simulation never refined beyond level 5 in position, the 83 million DoFs correspond to approximately $0.12\%$ the full-grid level 5,5,5,3,3,3 DoFs.
The simulation with $\tau_\text{rel}=5\times 10^{-6}$ took 73 minutes and 17 seconds.

We report the conservation errors are at most $7\times 10^{-12}$ for $0\leq t\leq 1$.

\begin{figure}[!htbp]
    \centering
    \includegraphics[width=0.4\linewidth]{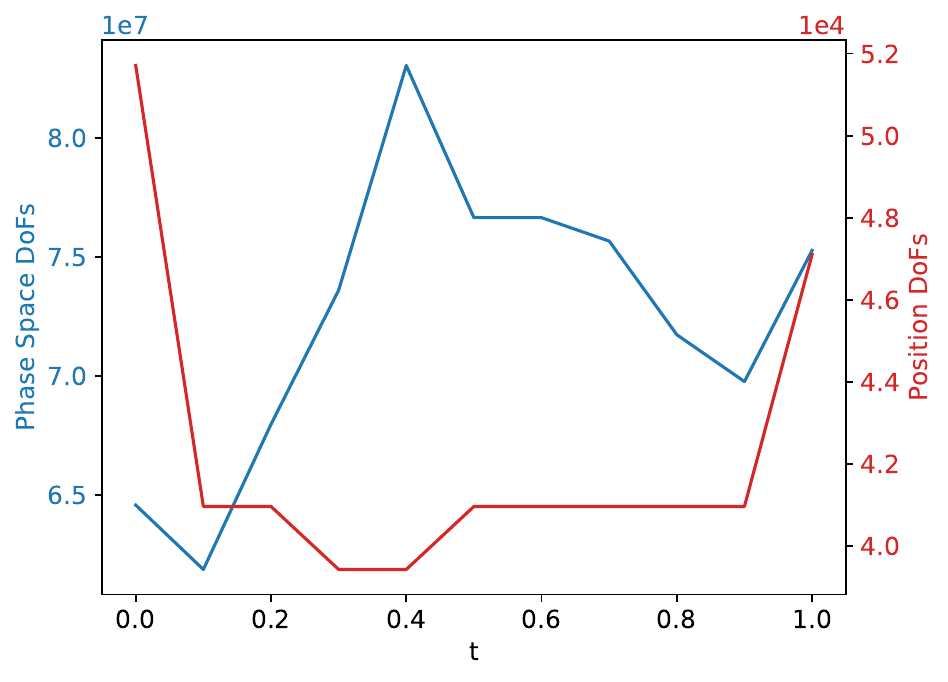}
    \caption{\Cref{subsec:expansion:3x3v} -- $3x3v$ Expansion problem, $\nu=1000$. Phase-space and position DoFs over time for $\tau_\text{rel}=5\times 10^{-6}$.  The level 5,5,5,3,3,3 full-grid corresponds to approximately 68 billion (\num{6.8e10}) DoFs. }
    \label{fig:expansion:3x3v:dofs}
\end{figure}

\section{Conclusion}
\label{sec:conclusion}

In this work, we have presented an adaptive sparse-grid DG method for the BGK model.
The results in this paper utilized the Adaptive Sparse-Grid Discretization (\asgard) library.
As demonstrated in \Cref{sec:numerical_experiments}, the adaptive sparse-grid method decreases the storage cost of DG numerical approximations to fractions of a percent without compromising accuracy across both fluid and rarefied regimes.
The method was demonstrated on a Sod shock tube problem and showed that the adaptive sparse-grid DG method can resolve sharp gradients in both position space for the fluid regime and phase space for the rarefied regime.
We additionally tested the method on a shear flow and expansion problem in the low-rank literature and found that smoothness can still be used to compress distributions in rarefied regimes that often exhibit increasing rank.
Moreover, the hybrid interpolation of the Maxwellian was shown to preserve the proper collision invariants of the BGK collision operator, leading to physically relevant solutions.

Future work includes distributed-computed solutions to larger-scale $3x3v$ models, implementation of kinetic boundary conditions (e.g., specular reflection \cite{cercignani1988BoltzmannEquation}), and embedded methods for non-trivial domain boundaries and geometries.

\printcredits

\bibliographystyle{abbrv}
\bibliography{ref}

\end{document}